\title{\textbf{Linearization of monomial ideals}}
\author{Milo Orlich\footnote{Department of Mathematics and Systems Analysis, Aalto University, Espoo, Finland.\newline Email: \tt milo.orlich@aalto.fi}}
\newtheoremstyle{break}%
{}{}%
{\slshape}{}%
{\bfseries}{.}%
{5pt}{}
\theoremstyle{definition}
\newtheorem{definition}{Definition}[section]
\newtheorem{fundefinition}[definition]{Functorial Definition}
\newtheorem*{definition*}{Definition}
\newtheorem{example}[definition]{Example}
\newtheorem{notation}[definition]{Notation}
\newtheorem{question}[definition]{Question}
\newtheorem{remark}[definition]{Remark}
\theoremstyle{break}
\newtheorem{lemma}[definition]{Lemma}
\newtheorem{proposition}[definition]{Proposition}
\newtheorem*{proposition*}{Proposition}
\newtheorem{theorem}[definition]{Theorem}
\newtheorem*{theorem*}{Theorem}
\newtheorem*{maintheorem*}{Main Theorem}
\newtheorem{corollary}[definition]{Corollary}
\newtheorem*{conjecture*}{Conjecture}
\newcommand\tbs[1]{\textsl{\textbf{#1}}}  
\def\K{\mathbb K}
\def\N{\mathbb N}
\def\Z{\mathbb Z}
\def\EE{\mathcal E}
\def\G{\mathbf G}
\def\F{\mathbf F}
\def\mm{\mathfrak m}
\def\ii{\mathbf i}
\def\jj{\mathbf j}
\def\la{\longrightarrow}
\def\map{\longmapsto}
\def\:{\colon}
\def\lcm{\mathrm{lcm}}
\def\depth{\mathrm{depth}}
\def\pd{\mathrm{projdim}}
\def\mult{\mathrm{mult}}  
\def\dist{\mathrm{dist}}
\def\diam{\mathrm{diam}}
\def\Syz{\mathrm{Syz}}
\def\Tor{\mathrm{Tor}}
\def\Supp{\mathrm{Supp}}
\def\de{\partial}
\def\se{\subseteq}
\def\vep{\varepsilon}
\def\lam{\lambda}
\def\cl{\overline}
\def\iso{\cong}
\def\emse{\emptyset}
\def\cocoa{\mbox{\rm 
   C\kern-.13em o\kern-.07 em C\kern-.13em o\kern-.15em A}}
\def\hom{\mathrm{hom}}
\def\Lex{\mathrm{Lex}}
\def\Lin{\mathrm{Lin}}
\def\LIN{\Lin^*}
\def\eq{\mathrm{eq}}
\def\sqf{\mathrm{sqf}}
\def\Ideals{\mathbf{Ideals}}
\def\MonIdeals{\mathbf{MonIdeals}}
\def\sign{\mathrm{sign}}
\def\RC{\mathrm{RC}}
\def\til{~}
\begin{document}

\maketitle

\abstract{We introduce a construction, called \emph{linearization}, that associates to any monomial ideal $I\subset\K[x_1,\dots,x_n]$ an ideal $\Lin(I)$ in a larger polynomial ring. The main feature of this construction is that the new ideal $\Lin(I)$ has linear quotients. In particular, since $\Lin(I)$ is generated in a single degree, it follows that $\Lin(I)$ has a linear resolution. We investigate some properties of this construction, such as its interplay with classical operations on ideals, its Betti numbers, functoriality and combinatorial interpretations. We moreover introduce an auxiliary construction, called \emph{equification}, that associates to an arbitrary monomial ideal $J$ an ideal $J^\eq$, generated in a single degree, in a polynomial ring with one more variable. We study some of the homological and combinatorial properties of the equification, which can be seen as a monomial analogue of the well-known homogenization construction.}

\tableofcontents 

\section{Introduction}

Among all ideals, those with a linear resolution are somehow ``simpler'' and have a vast literature. There are already ways of constructing ideals (or more in general modules) with linear resolutions (see for instance~\cite{EiGo}). The goal of this paper is to introduce and study a new construction, called \emph{linearization}, which converts an arbitrary monomial ideal~$I$ in a monomial ideal~$\Lin(I)$ with a linear resolution. In fact, $\Lin(I)$ has an even stronger property: it has linear quotients.
We start this introduction with an overview of monomial ideals and their resolutions. We then move on to outline the content of this paper.

\subsubsection*{Monomial ideals and their free resolutions}

Let $S=\K[x_1,\dots,x_n]$ be a polynomial ring over a field $\K$. A \emph{monomial ideal} is an ideal of $S$ generated by monomials. Such ideals are a core object of study in commutative algebra because, thanks to their highly combinatorial structure, they are easier to understand than arbitrary ideals.  For a general reference about monomial ideals, see the monograph~\cite{HH}. 

One can often tackle problems about general ideals by studying monomial ideals instead, through the methods  of Gr\"obner bases (see for instance Chapter~2 of~\cite{HH} or Chapter~15 of~\cite{EisCA}): one associates to an arbitrary ideal $I$ its initial ideal, which is monomial and hence easier to deal with.

A \emph{free resolution} over $S$ of an ideal $I\subset S$ consists of a sequence~$F_\bullet=(F_i)_{i\in\N}$ of free $S$-modules and homomorphisms $d_i\:F_i\to F_{i-1}$, with $i\ge1$, such that there is a surjective homomorphism $\vep\:F_0\to I$ and such that $F_\bullet\stackrel\vep\to I\to0$ is an exact complex. If one assumes the resolution to be \emph{minimal} and \emph{graded} (so that the free modules have to be shitfted accordingly), one may write
$$F_i=\bigoplus_{j\in\Z}S(-j)^{\beta_{ij}}$$
and the natural numbers $\beta_{ij}$, called the \emph{graded Betti numbers of $I$}\/, are uniquely determined. 

In the 1960's Kaplansky posed the problem of studying systematically the resolutions of monomial ideals. A great deal of research has been done ever since, starting with Taylor's PhD thesis~\cite{Tay}, where she defined a canonical construction that works for every monomial ideal but gives a highly non-minimal resolution in general (see Section~26 of~\cite{Pe}). Despite the apparently easy structure of monomial ideals, their \emph{minimal} resolutions have been quite elusive for more than half a century. Many constructions have been introduced, that either provide minimal resolutions only for certain classes of monomial ideals, or complexes that work in great generality but are not always minimal resolutions, or even resolutions (see for instance Sections~14 and~28 of~\cite{Pe}). Some of these constructions provide fruitful combinatorial or topological  interpretations of what happens on the algebraic side. In particular, Hochster, Stanley and Reisner (see~\cite{Stan}, \cite{Rei} and Chapter~5 of~\cite{BH93}) introduced a machinery where one associates bijectively squarefree monomial ideals to simplicial complexes, via the so-called \emph{Stanley--Reisner correspondence}, which allows to understand free resolutions in terms of simplicial homology. This beautiful bridge between commutative algebra and combinatorial topology is an instance of what has been the general trend in the last decades until now: quoting Peeva's words in~\cite{Pe}, \emph{``introduce new ideas and constructions which either have strong applications or/and are beautiful''}. In~2019, Eagon, Miller and Ordog described a canonical  minimal resolution for every monomial ideal, in~\cite{EMO}.

\subsubsection*{Linearization of monomial ideals}

Recall that a monomial ideal $I$ has a \emph{$d$-linear resolution} if $\beta_{i,j}(I)=0$ for $j\ne i+d$. In particular, the only $j$ for which we can have $\beta_{0,j}(I)\ne0$ is $d$, meaning that all the minimal generators of~$I$ have degree $d$. For what concerns the higher homological positions, having a linear resolution means that when we fix bases and write the maps in the resolutions as matrices, the non-zero entries of those matrices are linear forms.

A well-known numerical invariant of any ideal (or module, in general), is its \emph{(Castelnuovo--Mumford) regularity}, which measures how complicated the resolution is. For an ideal with all generators of degree $d$, the regularity is equal to~$d$, the smallest possible, if and only if the resolution is $d$-linear. 

 Among the many papers concerning linear resolutions we refer first of all to the work~\cite{EiGo} of~{Eisenbud} and~{Goto} and the classical work~\cite{Steu} by Steurich. More recent directions of research involve families of ideals such that every product of  elements in the family has a linear resolution (see for instance~\cite{BrCo}).  We refer to~\cite{Pe} and ~\cite{HH} for more information, and we provide additional references in the main body of the paper.

\begin{definition*}
Let $I\subset\K[x_1,\dots,x_n]$ be a monomial ideal with minimal set of monomial generators $G(I)=\{f_1,\dots,f_m\}$, such that $f_1,\dots,f_m$ all have the same degree~$d$. For all $i\in\{1,\dots,n\}$, denote by $M_i$ the largest exponent with which~$x_i$ occurs in $G(I)$.
The \emph{linearization of $I$}, inside the polynomial ring $R:=\K[x_1,\dots,x_n,y_1,\dots,y_m]$, is the ideal 
\begin{align*}
\Lin(I)&:=\big(x_1^{a_1}\cdots x_n^{a_n}\mid a_1+\dots+a_n=d\,\text{ and $a_i\le M_i$ for all $i$}\big)\\
&\ \quad+\big(f_jy_j/x_k \mid \text{$x_k$ divides $f_j$},\ k=1,\dots,n,\ j=1,\dots,m\big).
\end{align*}
We call the first summand \emph{complete part of $\Lin(I)$} and the second summand \emph{last part of $\Lin(I)$}.
\end{definition*}

Observe that the ideal $\Lin(I)$ is generated in the same degree $d$ as the original ideal $I$. One of the main properties of $\Lin(I)$, and the reason for the name ``linearization'', is the following:
\begin{theorem*}[Corollary\til\ref{Linlinres}]
The ideal $\Lin(I$) has a linear resolution over $R$.
\end{theorem*}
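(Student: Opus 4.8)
The plan is to prove the stronger statement that $\Lin(I)$ has \emph{linear quotients}, from which the linear resolution follows by the standard fact that a monomial ideal generated in a single degree with linear quotients has a linear resolution. So the real task is to exhibit an ordering $g_1,\dots,g_N$ of the minimal generators of $\Lin(I)$ such that for each $t$ the colon ideal $(g_1,\dots,g_{t-1}):g_t$ is generated by a subset of the variables $x_1,\dots,x_n,y_1,\dots,y_m$.

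First I would fix notation: list the generators of the complete part (the squarefree-by-$M_i$-bounded monomials of degree $d$ in the $x$'s) and the generators of the last part (the monomials $f_jy_j/x_k$). A natural attempt at the ordering is to put \emph{all} complete-part generators first, ordered among themselves by some term order (say reverse-lex or degree-lex in the $x_i$), and then append the last-part generators, grouped by $j=1,\dots,m$ and within a fixed $j$ ordered by the variable $x_k$ removed. The key point about the complete part is that it is itself an ideal with linear quotients: monomials of degree $d$ satisfying coordinatewise upper bounds $a_i\le M_i$ form what is essentially a (shifted) ``polymatroidal''-type ideal, and for such ideals one can check directly that $(m_1,\dots,m_{s-1}):m_s$ is generated by variables — given $m_r$ earlier than $m_s$, the monomial $m_r/\gcd(m_r,m_s)$ must be divisible by some variable $x_i$ such that $x_im_s/x_j$ is again a legal complete-part generator for suitable $j$, and this $x_i$ lies in the colon ideal. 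I would verify this ``exchange'' property using that all generators have the same degree and the bounds $M_i$ are respected.

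Next I would handle the last-part generators. Write $g = f_jy_j/x_k$. I need to show $(\text{all earlier generators}) : g$ is generated by variables. There are two sources of elements in this colon ideal: (a) other last-part generators $f_jy_j/x_\ell$ with the same $j$ — here $(f_jy_j/x_\ell):(f_jy_j/x_k) = (x_k)/\gcd = (x_k)$ up to the already-divided variable, giving the variable $x_\ell$ or $x_k$ in the colon; and (b) complete-part generators $h$ (and last-part generators with index $j'\ne j$, if they come earlier). For (b), the crucial observation is that $g$ without its $y_j$ factor is $f_j/x_k$, a monomial of degree $d-1$ in the $x$'s dividing $f_j$, and $f_j$ itself — having degree $d$ and exponents bounded by the $M_i$ — is (or is divisible by something that is) a complete-part generator; more precisely for any variable $x_i$ dividing $f_j$, the monomial $x_i \cdot g / y_j = x_i f_j / x_k$ need not be legal, but $f_j = x_k\cdot(f_j/x_k)$ shows $x_k\in (h):g$ for an appropriate complete-part generator $h$ dividing $f_j\cdot(\text{something})$. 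I would make this precise by showing that some complete-part generator $h$ divides $\lcm(h,g)$ with $h/\gcd(h,g)$ equal to a single variable, forcing that variable into the colon ideal, and that after collecting all such variables the colon ideal $(g_1,\dots,g_{t-1}):g_t$ is exactly generated by them.

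The main obstacle I anticipate is verifying the linear-quotients condition \emph{at the junction} between the complete part and the last part, and among last parts with different $j$: one must be sure that every minimal generator of each colon ideal is a single variable and not a higher-degree monomial, which requires carefully choosing the internal order within the complete part and the order in which the indices $j$ are processed so that the ``witness'' complete-part or last-part generator providing a given variable actually appears earlier. I would likely need a lemma isolating the exchange property of the complete part (that for any two of its generators $h, h'$ of degree $d$ with $h'$ preceding $h$, there is a variable $x_i \mid h'/\gcd(h',h)$ with $x_i h / x_j$ a complete-part generator for some $x_j \mid h$), prove it once, and then reuse it to absorb the contributions coming from the monomials $f_j/x_k$. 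Once the ordering and these exchange properties are in place, checking that each colon is variable-generated is a finite, essentially combinatorial verification, and $\Lin(I)$ having a linear resolution is then immediate from the cited implication ``linear quotients in a single degree $\Rightarrow$ linear resolution.''
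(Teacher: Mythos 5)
Your high-level strategy coincides with the paper's: the result is deduced from the stronger statement that $\Lin(I)$ has linear quotients (this is Theorem~\ref{mainthm}), with the same ordering in outline (complete part first, then last-part generators $f_jy_j/x_k$ grouped by $j$ and ordered by $k$), and then Proposition~\ref{ifquotthenres} gives the linear resolution. But as written your proposal has two genuine gaps that prevent it from being a proof.

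First, you never actually secure the existence of the ``witness'' generators inside the \emph{cropped} complete part. Whenever you need to exhibit a generator $u_k$ with $u_k/\gcd(u_k,u_i)=x_\ell$, the natural candidate is something like $\frac{x_\ell}{x_q}u_i$ or $\frac{x_\ell}{x_r}f_t$; but such a monomial is a minimal generator of $\Lin(I)$ only if its exponent vector still satisfies the bounds $a_i\le M_i$, and this must be checked in every branch. You say you ``would verify this using that the bounds $M_i$ are respected,'' but that is precisely the content that needs proving, and doing it branch by branch is delicate. The paper sidesteps this entirely by a structural trick: it proves linear quotients for the \emph{uncropped} ideal $P+L$ with $P=(x_1,\dots,x_n)^d$, where every degree-$d$ monomial in the $x$'s is available as a witness, and then invokes Proposition~\ref{cropparequozientilinear} to restrict to $\Lin(I)={(x_1,\dots,x_n)^d}_{\le(M_1,\dots,M_n,0,\dots,0)}+L$. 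Your proposal does not contain this device, which is the key enabling step and is exactly why Section~\ref{randomtools} proves that cropping preserves linear quotients in the first place.

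Second, the case you flag as the ``main obstacle'' -- the colon computation when both generators lie in the last part but come from distinct indices $j\ne j'$ -- is left entirely unaddressed. That case is the bulk of the paper's proof: with $u_j=\frac{f_\mu}{x_\alpha}y_\mu$ and $u_i=\frac{f_\lambda}{x_\beta}y_\lambda$ and $\mu<\lambda$, one has to split on $\alpha=\beta$ versus $\alpha\ne\beta$, and in the latter on whether the leftmost index $\ell$ of disagreement between the exponent vectors equals $\alpha$, with three further sub-subcases and witnesses alternately of the forms $\frac{x_\ell}{x_\alpha}f_\lambda$, $\frac{x_\ell}{x_\beta}f_\lambda$, $\frac{f_\lambda}{x_\beta}y_\mu$, $\frac{x_\alpha}{x_\beta}f_\lambda$, $\frac{x_\gamma}{x_\beta}f_\lambda$. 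There is no shortcut here, and saying the verification ``is a finite, essentially combinatorial check'' does not discharge it. Finally, a small but useful pointer: the systematic criterion you want for ``each colon is variable-generated'' is exactly Lemma~\ref{amazinglemma}, which is what the paper applies throughout; your proposal re-derives its content informally instead of stating and using it once.
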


This is actually implied by the fact that $\Lin(I)$ has ``linear quotients'', a property which we now recall. Given an ideal $I\subset S:=\K[x_1,\dots,x_n]$ and a polynomial $g\in S$, the \emph{colon ideal} (or \emph{quotient ideal}) of $I$ with respect to $g$ is 
$$I:g=\{f\in S\mid fg\in I\}.$$
An ideal $I\subset S$ is said to have \emph{linear quotients} if there exists a system of generators $g_1,\dots,g_m$ of $I$ such that each colon ideal~$(g_1,\dots,g_{k-1}):g_k$ is generated by linear forms, for any~$k\in\{2,\dots,m\}$.  This depends in general on the order of $g_1,\dots,g_m$. For the combinatorial meaning of linear quotients and additional information, see Section\til8.2 of\til\cite{HH} and the next sections in this introduction. The main result of the paper,   which implies the fact that~$\Lin(I)$ has a linear resolution, is the following:

\begin{maintheorem*}[Theorem\til\ref{mainthm}]
Assume that the generators $f_1,\dots,f_m$ of $I$ are in decreasing lexicographic order.
List the generators of the complete part of $\Lin(I)$ in decreasing lexicographic order.  List the generators $\frac{f_j}{x_k}y_j$  of the last part first by increasing\til$j$, and secondly by increasing~$k$.
The ideal $\Lin(I)$  has linear quotients with respect to the given ordering of the generators.
\end{maintheorem*}

\subsubsection*{Outline of the paper}

In Section~\ref{randomtools} we recall the necessary backgroud and we prove the following:
\begin{proposition*}[Proposition~\ref{cropparequozientilinear}]
Let $I=(f_1,\dots,f_s)\subset \K[x_1,\dots,x_n]$ be a monomial ideal with linear quotients with respect to $f_1,\dots,f_s$. Fix a vector $v=(v_1,\dots,v_n)\in\N^n$ of non-negative integers and denote  by $I_{\le v}$ the ideal generated by  the generators $f_j=x_1^{a_1}\cdots x_n^{a_n}$ of~$I$ such that $a_i\le v_i$ for all\til$i$. Denote these generators as $f_{b_1},\dots,f_{b_t}$, with $b_1<\dots<b_t$. Then $I_{\le v}$ has linear quotients with respect to $f_{b_1},\dots,f_{b_t}$.
\end{proposition*}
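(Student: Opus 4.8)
The plan is to verify the linear-quotients condition for $I_{\le v}$ directly from the one for $I$, by showing that each colon ideal arising in $I_{\le v}$ is a subideal of the corresponding colon ideal in $I$, and that both are generated by linear forms.

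First I would recall the standard combinatorial description of linear quotients for monomial ideals: $I=(f_1,\dots,f_s)$ has linear quotients with respect to this ordering if and only if for every $k$ and every $\ell<k$ there exists $\ell'<k$ and a variable $x_i$ such that $x_i = f_\ell \cdot \lcm(f_\ell,f_k)^{-1}$ divides $\lcm(f_{\ell'},f_k)/f_k$, i.e. $f_{\ell'}/\gcd(f_{\ell'},f_k)$ is a single variable dividing $\lcm(f_\ell,f_k)/f_k$. Equivalently, the colon ideal $(f_1,\dots,f_{k-1}):f_k$ is generated by the variables $x_i$ such that $x_i \mid f_\ell/\gcd(f_\ell,f_k)$ for some $\ell<k$, and linear quotients means this colon ideal equals the ideal generated by \emph{all} such variables ranging over $\ell<k$ — the content being that each generator $x_i(\lcm(f_\ell,f_k)/f_k)/x_i \cdot f_k$ is already a multiple of some $x_{i'} f_k$ with $x_{i'}$ in the colon ideal via a smaller index. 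I would make this precise using the characterization in Section~8.2 of~\cite{HH}.

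Next, fix $t$ and consider the colon ideal $(f_{b_1},\dots,f_{b_{t-1}}):f_{b_t}$ appearing in $I_{\le v}$. Since $\{b_1,\dots,b_{t-1}\}\subseteq\{1,\dots,b_t-1\}$, this colon ideal is contained in $(f_1,\dots,f_{b_t-1}):f_{b_t}$, which by hypothesis is generated by linear forms; hence $(f_{b_1},\dots,f_{b_{t-1}}):f_{b_t}$ is an ideal generated by monomials that divide linear forms, so it is generated by a subset of the variables — in particular it is automatically generated by linear forms. Wait: that last implication needs care, because a subideal of a monomial ideal generated by variables need not itself be generated by variables. So the real work is to show the colon ideal $(f_{b_1},\dots,f_{b_{t-1}}):f_{b_t}$ is generated in degree $1$, not merely contained in something that is.

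The key step, and the main obstacle, is therefore the following: given a generator $f_{b_t}:f_{b_\ell} = f_{b_\ell}/\gcd(f_{b_\ell},f_{b_t})$ of the colon ideal (for $\ell<t$), I must produce a variable $x_i$ in $(f_{b_1},\dots,f_{b_{t-1}}):f_{b_t}$ that divides it. By the linear quotients of $I$, there is a variable $x_i$ dividing $f_{b_\ell}/\gcd(f_{b_\ell},f_{b_t})$ and an index $c < b_t$ with $f_c/\gcd(f_c,f_{b_t})=x_i$. The issue is that $f_c$ need not be among $f_{b_1},\dots,f_{b_{t-1}}$: a priori $f_c$ might fail the bound $a_i\le v_i$. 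Here is where I would use the specific shape of the witness: since $f_c/\gcd(f_c,f_{b_t}) = x_i$, the monomial $f_c$ divides $x_i\cdot f_{b_t}$, so for every variable $x_j$ the exponent of $x_j$ in $f_c$ is at most $\max(\text{exp}_j(f_{b_t}), \text{exp}_j(f_{b_t})+[j=i]) = \text{exp}_j(f_{b_t}) + [j=i\text{ and }x_i\mid f_c/\gcd]$. Moreover $x_i$ divides $f_{b_\ell}/\gcd(f_{b_\ell}, f_{b_t})$, so $\text{exp}_i(f_{b_\ell}) > \text{exp}_i(f_{b_t})$, giving $\text{exp}_i(f_{b_t}) + 1 \le \text{exp}_i(f_{b_\ell}) \le v_i$ since $f_{b_\ell}\in G(I_{\le v})$; and for $j\ne i$, $\text{exp}_j(f_c)\le\text{exp}_j(f_{b_t})\le v_j$ since $f_{b_t}\in G(I_{\le v})$. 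Hence every exponent of $f_c$ respects the bound $v$, so $f_c$ belongs to $G(I_{\le v})$, i.e. $c\in\{b_1,\dots,b_{t-1}\}$. This shows $x_i\in(f_{b_1},\dots,f_{b_{t-1}}):f_{b_t}$ and completes the argument. I expect the bookkeeping with exponents in this last step to be the only delicate point; once one commits to tracking the witness monomial $f_c$ rather than just the variable $x_i$, everything falls into place.
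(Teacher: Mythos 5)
Your proof is correct and follows essentially the same route as the paper: apply Lemma~8.2.3 of~\cite{HH}, take the witness $f_c$ provided by the linear-quotients hypothesis on $I$, and track exponents to conclude that $f_c$ itself survives the cropping (the crucial points being $\mathrm{exp}_j(f_c)\le\mathrm{exp}_j(f_{b_t})\le v_j$ for $j\ne i$ and $\mathrm{exp}_i(f_c)=\mathrm{exp}_i(f_{b_t})+1\le\mathrm{exp}_i(f_{b_\ell})\le v_i$). The only difference is cosmetic; your brief detour about subideals of variable-generated ideals is a worry you correctly identified and then correctly resolved.
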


In Section~\ref{lineargeneral} we define the linearization for ideals generated in a single degree and investigate some of its properties.
We also define a slightly different construction, the $*$-linearization $\LIN(I)$, which is easier or more meaningful to consider for some results. We give a ``more precise'' functorial  definition of linearization in Section~\ref{sectionwithfunctor}, and we prove the two following results:
\begin{theorem*}[Theorem\til\ref{carattpolymatr}]
For a monomial ideal $I\subset S=\K[x_1,\dots,x_n]$ generated in degree~$d$, in the following cases $\Lin(I)$ is polymatroidal:
\begin{enumerate}
\item[(a)] $d=1$, that is, $I$ is generated by variables; 
\item[(b)] $d$ is arbitrary and $I$ is principal.
\end{enumerate}
In all other cases $\Lin(I)$ is not polymatroidal.
\end{theorem*}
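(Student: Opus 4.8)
The plan is to analyze separately the two cases in which $\Lin(I)$ is claimed to be polymatroidal, and then to rule out polymatroidality in all remaining cases by exhibiting an explicit obstruction to the exchange property in the set of exponent vectors of the minimal generators of $\Lin(I)$.

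\textbf{The positive cases.} First I would treat case (a), $d=1$: if $I=(x_{i_1},\dots,x_{i_m})$ then the complete part of $\Lin(I)$ is just $(x_{i_1},\dots,x_{i_m})$ again (since $M_i\le 1$ exactly on the support), and the last part contributes the generators $\tfrac{x_{i_j}}{x_{i_j}}y_j=y_j$. So $\Lin(I)=(x_{i_1},\dots,x_{i_m},y_1,\dots,y_m)$ is generated by variables, hence is squarefree Veronese of degree $1$, which is polymatroidal (equivalently, its generators form the bases of a uniform matroid). Next, case (b): if $I=(f)$ is principal with $f=x_1^{c_1}\cdots x_n^{c_n}$ of degree $d$, then $M_i=c_i$ and the complete part is the ideal generated by all monomials of degree $d$ with $a_i\le c_i$; this is precisely the polymatroidal ideal whose generators are the bases of the transversal/“box” polymatroid $\{a\in\N^n : \sum a_i=d,\ a_i\le c_i\}$, which is well known to be a (discrete) polymatroid. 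The last part adds the single generator $\tfrac{f}{x_k}y_1$ for each $x_k\mid f$; I would check directly that appending these to the box polymatroid still satisfies the exchange property — the point being that each such generator is obtained from $f$ (which lies in the complete part) by lowering one $x_k$-coordinate and raising the new coordinate $y_1$ by one, and $f$ is the unique generator with the maximal $x_k$-exponents, so the exchange moves between a $\tfrac{f}{x_k}y_1$-generator and the rest go through $f$. This is the one spot where a small case-check is unavoidable, but it is routine.

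\textbf{The negative direction.} For the remaining cases — $d\ge 2$ and $I$ \emph{not} principal, so $m\ge 2$ — I would produce a fixed pair of generators of $\Lin(I)$ and a coordinate violating the polymatroid exchange property. The natural candidates live in the last part: take $u=\tfrac{f_1}{x_k}y_1$ and $w=\tfrac{f_2}{x_\ell}y_2$ for suitable $k,\ell$. These monomials involve the variables $y_1$ and $y_2$ each to the first power and in \emph{disjoint} ways, so $\deg_{y_1}u=1>0=\deg_{y_1}w$, and the exchange property would demand some generator $v$ of $\Lin(I)$ with $\deg_{y_1}v=\deg_{y_1}u-1=0$ and $\deg_{y_2}v=\deg_{y_2}w+1$ (or a symmetric statement), i.e. a generator involving $y_2$ but obtained by “pushing an $x$-exponent up toward $f_1$’s shape and $y_2$ up” — and one checks that, because $f_1\ne f_2$ have the same degree, no generator of either part can simultaneously carry $y_2$ and match the needed $x$-exponent profile. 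I would make this precise by writing out the finitely many possibilities for $v$ (it is either in the complete part — but then it has no $y$ at all — or of the form $\tfrac{f_j}{x_t}y_j$ — which forces $j=2$ and then the $x$-part is determined by $f_2$, giving a contradiction with the required coordinate value coming from $f_1$). A clean way to organize this is via the known characterization (Herzog–Hibi) that a monomial ideal generated in one degree is polymatroidal iff its generating exponent set satisfies the symmetric exchange property; so the whole negative part reduces to: "find two generators and one coordinate with no valid exchange partner."

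\textbf{Main obstacle.} The genuinely delicate point is the negative direction: one must choose the pair of generators and the violated coordinate \emph{uniformly} across all the excluded cases (various $n$, various $d\ge 2$, various non-principal $I$), and argue that no generator — neither in the complete part nor in the last part — can serve as the exchange partner. I expect the argument to split according to whether $f_1$ and $f_2$ differ already in their $x$-support or only in exponents on a common support, and in the latter case one has to be slightly careful because the complete part may contain many monomials “between” $f_1$ and $f_2$; the resolution is that those complete-part monomials carry no $y$-variables at all, so they cannot fix the $y$-discrepancy that the last-part generators introduce. Once that observation is in place, the verification is a finite check on the shape of the generators and the exchange condition.
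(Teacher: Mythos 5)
Your treatment of the positive cases (a) and (b) is fine and matches the paper's in spirit. The gap is in the negative direction, and it is a real one: the obstruction coordinate you chose, namely $y_1$, does not produce a failure of the exchange property. Take $u=\frac{f_1}{x_k}y_1$, $w=\frac{f_2}{x_\ell}y_2$ and $z_i=y_1$, so $\deg_{y_1}(u)=1>0=\deg_{y_1}(w)$. The exchange property then asks for \emph{some} variable $z_j$ with $\deg_{z_j}(u)<\deg_{z_j}(w)$ and $\frac{u}{y_1}z_j\in G(\Lin(I))$; nothing forces $z_j$ to be $y_2$, and in fact a valid $z_j$ always exists. If there is an $x$-variable $x_r$ with $\deg_{x_r}(f_1/x_k)<\deg_{x_r}(f_2/x_\ell)$, then $\frac{u}{y_1}x_r=\frac{f_1}{x_k}x_r$ is a monomial of degree $d$ carrying no $y$-variable and with exponent vector componentwise $\le(M_1,\dots,M_n)$ (indeed $\deg_{x_r}(f_1/x_k)+1\le\deg_{x_r}(f_2/x_\ell)\le M_r$, while for $s\ne r$ one has $\deg_{x_s}(f_1/x_k)\le\deg_{x_s}(f_1)\le M_s$), so it is a generator of the complete part and the exchange succeeds. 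If no such $x_r$ exists, then since both $x$-exponent vectors have total degree $d-1$ they must be equal, $\frac{f_1}{x_k}=\frac{f_2}{x_\ell}$, and $z_j=y_2$ gives the exchange partner $\frac{f_1}{x_k}y_2=\frac{f_2}{x_\ell}y_2\in G(\Lin(I))$. So this pair with $z_i=y_1$ \emph{never} witnesses a violation, for any choice of $k,\ell$. Your phrase ``the exchange would demand some generator $v$ with $\deg_{y_2}v=\deg_{y_2}w+1$'' also misstates the condition: the partner is $\frac{u}{z_i}z_j$, whose degree in any $y$-variable is at most $1$; no generator can have $y_2$-degree $2$, but no characterization of polymatroidality asks for one.

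The paper avoids exactly this pitfall by taking $z_i$ to be an $x$-variable. When $u=\frac{f}{x_k}y_f$ and $z_i=x_p$, the would-be partner $\frac{u}{x_p}z_j$ still carries $y_f$, so to be a generator it must equal $\frac{f}{x_s}y_f$ for some $s$, which forces $z_jx_s=x_kx_p$ and hence the unique candidate $z_j=x_k$, $s=p$; the paper's case analysis is then devoted to choosing $f$, $g$, the auxiliary indices and $p$ so that this unique candidate fails (e.g., $x_p\nmid f$, or $\deg_{x_k}$ does not go the right way). If you insist on $y_1$ as the distinguished coordinate you would have to invoke the \emph{symmetric} exchange characterization and argue that, for a suitable choice of $k$ and $\ell$, not both $\frac{f_1}{x_k}y_2$ and $\frac{f_2}{x_\ell}y_1$ are generators; that route is salvageable but is not what you wrote, and it still requires a nontrivial case analysis (for instance when $f_1=x_a^d$ the only admissible $k$ is $a$, and $\frac{f_1}{x_a}y_2$ can in fact be a generator, so the choice of $\ell$ becomes delicate). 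As written, the negative direction does not go through.
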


\begin{theorem*}[Theorem~\ref{radhaslinearquotients}]
The radical ideal of $\LIN(I)$ has linear quotients, and hence  linear resolution.
\end{theorem*}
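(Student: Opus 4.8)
The plan is to reduce the claim about $\sqrt{\LIN(I)}$ to an application of the Main Theorem (Theorem~\ref{mainthm}) combined with Proposition~\ref{cropparequozientilinear}. First I would unwind what the radical of $\LIN(I)$ looks like combinatorially. The generators of $\LIN(I)$ come in two batches: the complete part, consisting of all monomials of degree $d$ in the $x_i$ with $x_i$-exponent bounded by $M_i$, and the last part, the monomials $\tfrac{f_j}{x_k}y_j$. Taking the radical of a monomial ideal amounts to replacing each generating monomial by its squarefree part (its support), and then discarding any generator that is divisible by another. So the first step is to identify $\sqrt{\LIN(I)}$ explicitly as the ideal generated by the squarefree monomials obtained this way — these are squarefree monomials in the $x_i$'s (from the complete part) together with squarefree monomials of the form $\sqrt{f_j/x_k}\cdot y_j$, possibly times nothing, from the last part. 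Note that the $y_j$ appear to the first power already, so only the $x$-part needs to be squarefree-ified.

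The key observation I would exploit is that, up to a harmless renaming, $\sqrt{\LIN(I)}$ is itself the ($*$-)linearization of a suitable monomial ideal — or, more precisely, is obtained from such a linearization by the "$\le v$" truncation of Proposition~\ref{cropparequozientilinear}. Concretely: let $J$ be the monomial ideal (in a possibly smaller set of $x$-variables, or the same ring) whose generators are the squarefree parts of $f_1,\dots,f_m$; if these all still have the same degree, $\Lin(J)$ or $\LIN(J)$ is defined, and one checks that its generating set, after the truncation to exponent vector $v=(1,\dots,1)$ that forces squarefreeness in the complete part, matches the generating set of $\sqrt{\LIN(I)}$. One has to be a little careful: the squarefree parts of the $f_j$ need not all have the same degree, so I expect the cleanest route is to observe directly that $\sqrt{\LIN(I)}$ has the same shape as the output of the linearization construction — a "complete part" consisting of all squarefree degree-$e$ monomials in some subset of variables (for the appropriate $e$), plus a "last part" of the form $(\text{monomial})\cdot y_j$ — and then run the proof of Theorem~\ref{mainthm} essentially verbatim on this ideal.

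Therefore the core of the argument is to order the generators of $\sqrt{\LIN(I)}$ in the analogue of the order prescribed in the Main Theorem: the squarefree monomials of the complete part in decreasing lexicographic order, followed by the last-part generators sorted first by increasing $j$ and then by increasing $k$. I would then verify the linear-quotients condition for each colon ideal $(g_1,\dots,g_{k-1}):g_k$ by the same case analysis as in the proof of Theorem~\ref{mainthm} — the complete-part-against-complete-part case, the last-part-against-complete-part case, and the last-part-against-last-part case — checking in each case that the colon ideal is generated by variables. Since linear quotients implies a linear resolution (this is the standard fact recalled in the introduction, and already used for Corollary~\ref{Linlinres}), the conclusion about the linear resolution follows immediately.

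The main obstacle I anticipate is bookkeeping rather than conceptual: passing to squarefree parts can collapse several generators of $\LIN(I)$ into one, can drop some generators entirely (when one squarefree part divides another), and can change degrees, so I must make sure the reduced generating set is still minimal and still has the "linearization shape" needed to invoke the proof of Theorem~\ref{mainthm}. In particular I would double-check that no last-part generator $\sqrt{f_j/x_k}\,y_j$ becomes redundant in a way that breaks the inductive order, and that the complete part of $\sqrt{\LIN(I)}$ is genuinely the full set of squarefree monomials of the relevant degree in the relevant variables (so that Proposition~\ref{cropparequozientilinear} applies with $v=(1,\dots,1)$). Once the reduced generating set and its order are pinned down correctly, the verification of linear quotients is a routine adaptation of the Main Theorem's proof.
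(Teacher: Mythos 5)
Your high-level plan (describe the generators of $\sqrt{\LIN(I)}$ explicitly, order the complete part in decreasing lex followed by the last-part generators, then check linear quotients by a colon-ideal computation) is aligned in spirit with what the paper does, but the route you sketch to get there has a genuine gap, and it is precisely the step you dismiss as ``bookkeeping.''

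The crucial fact, established in the Proposition immediately preceding the theorem, is that after taking the radical almost all of the last part vanishes: $\sqrt{f_j/x_k}\,y_j$ is a minimal generator of $\sqrt{\LIN(I)}$ only in a very degenerate situation, namely when $f_j$ has exactly one variable with exponent $1$ and every other variable in its support with exponent $M$ (the ``pathological'' case, which can only occur when $b=1$ in the Euclidean division $d=aM+b$). In all other cases $\sqrt{f_j/x_k}\,y_j$ is a multiple of a squarefree degree-$(a+\sign(b))$ monomial in the $x$-variables and is therefore redundant. So the resulting minimal generating set is the full set of squarefree $x$-monomials of degree $a+\sign(b)$ plus a (usually very small, often empty) collection of pathological $y$-generators. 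It does not have ``linearization shape'': it is nothing like $\LIN(J)$ or a truncation thereof for $J$ the ideal of squarefree parts $\sqrt{f_j}$, and the example $I=(x_1^2x_2,\,x_1x_2x_3)$ already breaks that plan, since the squarefree parts $x_1x_2$ and $x_1x_2x_3$ have different degrees. Nor is it literally the input data for the case analysis in the proof of Theorem~\ref{mainthm}, whose ``last part'' consists of all the $f_j y_j/x_k$ over all $j$ and all $x_k\mid f_j$; you would be running that proof on an object with a completely different second summand, so ``verbatim'' cannot be taken at face value.

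Once you have the correct description of $G\big(\sqrt{\LIN(I)}\big)$, the paper's argument is actually much shorter than the proof of Theorem~\ref{mainthm}: for generators in the complete part the colon ideals behave exactly as for the squarefree power $(x_1,\dots,x_n)^{a+\sign(b)}_\sqf$ (Lemma~\ref{randpeeva} / Proposition~\ref{rkpartecompletasqf}); for each pathological generator $\sqrt{f_{j_\ell}/x_{k_\ell}}\,y_{j_\ell}$ the colon ideal is generated by the $n-a$ variables $x_i\notin\Supp(f_{j_\ell}/x_{k_\ell})$ together with one $y_{j_s}$ for each earlier pathological generator with $f_{j_s}/x_{k_s}=f_{j_\ell}/x_{k_\ell}$. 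In short: the reduction to Proposition~\ref{cropparequozientilinear} or to the proof of Theorem~\ref{mainthm} that you hope for does not materialize, and the missing ingredient is the structural classification of which last-part generators survive, which is doing the real work in the paper.
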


In Section~\ref{linearsquarefree} we focus on the case where $I$ is squarefree, which happens if and only if~$\Lin(I)$ is squarefree. This section is done mostly with $\LIN(I)$ instead of $\Lin(I)$, in order to make the notation less heavy. The difference between the two, in the squarefree case, is anyway not meaningful. We compute the Betti numbers of $\LIN(I)$, which depend only on how the monomials of degree~$d-1$ divide the minimal generators of~$I$. In ``simplicial terminology'', these monomials are called \emph{codimension-$1$~{facets}} of the simplicial complex whose facet ideal is~$I$. Since we have applications to hypergraphs in mind, we call them \emph{$(d-1)$-edges} instead.
In Corollary~\ref{bnumbersforlin} we get the Betti numbers of $\LIN(I)$ as follows. Let us call a \emph{$j$-cluster} a set of cardinality $j$ consisting of generators of $I$ that are divided by a same monomial of degree~$d-1$. For each\til$j$, denote by~$C_j$ the number of maximal $j$-clusters, that is, $j$-clusters that are not part of a $(j+1)$-cluster. A closed formula for the Betti numbers is then
\begin{align*}
\beta_i\big(\LIN(I)\big)&=\binom{i+d-1}{d-1}\binom n{i+d}
+\binom{n-d+1}i\bigg(md-\sum_{j\ge2}(j-1)C_j\bigg)\\
&\quad+\sum_{j\ge2}C_j\sum_{k=2}^j\binom{n-d+k}i,
\end{align*}
where $m$ is the number of generators of $I$ and $d$ is their degree. In particular, if we denote $N:=\max\{j\mid C_j\ne 0\}$, then we have
$$\pd_R(\LIN(I))=n-d+N.$$
In Section~\ref{gunnarsproof} we give an alternative, more conceptual proof that $\LIN(I)$ has a linear resolution and that its Betti numbers only depend on how the monomials of degree~$d-1$ divide the generators of~$I$. This second proof was  taught to me by Gunnar Fl\o ystad. We conclude in Section~\ref{hyperlin} with an interpretation of the squarefree linearization by means of hypergraphs, related to the work of  H\`a and Van Tuyl in~\cite{HV}. 

In Section~\ref{linnonequig} we introduce an auxiliary construction in order to generalize the linearization to arbitrary monomial ideals:
\begin{definition*}
Let $I$ be an arbitrary monomial ideal in $\K[x_1,\dots,x_n]$, with minimal system of monomial generators $G(I)=\{f_1,\dots,f_m\}$. Denote $d_j:=\deg(f_j)$ for all $j$ and $d:=\max\{d_j\mid j=1,\dots,m\}$. We define the \emph{equification of $I$}as
$$I^\eq:=(f_1z^{d-d_1},f_2z^{d-d_2},\dots,f_mz^{d-d_m})$$
in the polynomial ring $\K[x_1,\dots,x_n,z]$ with one extra variable $z$. We moreover define the \emph{linearization of $I$} as
$$\Lin(I^\eq),$$
where $\Lin$ is the linearization defined earlier for ideals generated in a single degree.
\end{definition*}
The equification construction seems to be interesting in its own right, and in Section~\ref{omogeneizzmonomials} we investigate some of its properties. For instance, in Proposition~\ref{inequalitybnumbs} we show that  the total Betti numbers of $I$ and $I^\eq$ satisfy the inequalities
 $$\beta_i(I)\le\beta_i(I^\eq)\qquad\text{for all $i>0$}$$
and $\beta_0(I)=\beta_0(I^\eq)$, where the resolutions are taken over the respective polynomial rings. Lastly, in Section~\ref{lcmlattices} we compare~$I$ and~$I^\eq$ by observing that the $\lcm$-lattice of~$I$ can be embedded in that of $I^\eq$.

In Section~\ref{domandeaperte} we discuss some open questions and possible future developments. In particular, it might be possible to generalize in some meaningful way the constructions of linearization and equification to more general (at least homogeneous) ideals.

\medskip

\noindent
\emph{Rees algebras.} We conclude this part of the introduction by drawing the reader's attention to a well-known construction, the Rees algebra of $I$. Although it will never be used in the paper, the reason for mentioning this is that in the definition of linearization the ring is enlarged by introducing new variables, and this might seem a bit artificial on one hand, or very similar to what one does when defining Rees algebras on the other hand. 
Let $I=(f_1,\dots,f_m)\subset S:=\K[x_1,\dots,x_m]$ be a homogeneous ideal, where $f_1,\dots,f_m$ are a minimal system of homogeneous generators. The \emph{Rees algebra of $I$} is the image of the $S$-algebra homomorphism
\begin{align*}
S[y_1,\dots,y_m]&\longrightarrow S[t]\\
y_i&\longmapsto f_it,
\end{align*}
namely the subalgebra $S[f_1t,\dots,f_mt]\subset S[t]$. The similarity with the linearization consists in the introduction of new variables $y_j$, as many as generators of the ideal. The similarity seems to end here: in particular, $\Lin(I)$ has its fundamental property  of having linear resolution. 
However, it would be interesting to investigate in the future if there is an analogous theory of deformations for $\Lin(I)$ as there is for the Rees algebra. See Section~{6.5} of~\cite{EisCA} for more about this.


\subsubsection*{Motivation and combinatorial interpretations}

The contents of this section, in particular about Booth--Lueker graphs, are not necessary for the rest of the paper.  The purpose of this section is just to provide motivation for the linearization construction, and possible combinatorial applications.

In~1975 Booth and Lueker introduced a construction defined as follows, in their paper~\cite{BL75}.

\begin{definition}\label{defBL}
Let $G$ be a finite simple graph with vertices $x_1,\dots,x_n$. Let $e_1,\dots,e_m$ be the edges of~$G$. We define the \emph{Booth--Lueker graph of $G$}, denoted~$BL(G)$, on the set of vertices $\{x_1,\dots,x_n\}\cup\{y_1,\dots,y_m\}$, as follows:  for all $i$ and $j$, $BL(G)$ has the edge~$x_ix_j$, and for each edge $e_i=x_{i_1}x_{i_2}$ of $G$, $BL(G)$ has the edges~$x_{i_1}y_i$ and~$x_{i_2}y_i$.
\end{definition}

The original purpose of this was related to the graph isomorphism problem for graphs. This construction was studied from an algebraic point of view by the author of this paper et al.\til{}in~\cite{EJO}. See Figure~\ref{fig:BL} for an example of graph $G$ and its Booth--Lueker graph~$BL(G)$.
\begin{figure}
\begin{center}
\begin{tikzpicture} [>=latex,scale=1.1]
\fill (-4,0) circle (0.1);
\coordinate [label=below left: $x_4$] (x4) at (-4,0);
\fill (-4,1) circle (0.1);
\coordinate [label=above left: $x_1$] (x1) at (-4,1);
\fill (-3,0) circle (0.1);
\coordinate [label=below right: $x_3$] (x3) at (-3,0);
\fill (-3,1) circle (0.1);
\coordinate [label=above right: $x_2$] (x2) at (-3,1);
\draw [thick] (-3,1) -- (-3,0);
\coordinate [label=right: $e_2$] (e2) at (-3.05,.5);
\draw [thick] (-3,1) -- (-4,1);
\coordinate [label=above: $e_1$] (e1) at (-3.5,0.95);
\draw [thick] (-4,0) -- (-3,0);
\coordinate [label=below: $e_3$] (e3) at (-3.5,.05);
\fill (0,0) circle (0.1);
\fill (0,1) circle (0.1);
\fill (1,0) circle (0.1);
\fill (1,1) circle (0.1);
\fill (2,1.5) circle (0.1);
\coordinate [label=right: $y_1$] (y1) at (2,1.5);
\fill (2,.5) circle (0.1);
\coordinate [label=right: $y_2$] (y2) at (2,.5);
\fill (2,-.5) circle (0.1);
\coordinate [label=right: $y_3$] (y3) at (2,-.5);
\draw [thick] (0,0) -- (1,0);
\draw [thick] (1,1) -- (0,1);
\draw [thick] (1,1) -- (1,0);
\draw [thick] (0,0) -- (0,1);
\draw [thick] (0,0) -- (1,1);
\draw [thick] (1,0) -- (0,1);
\draw [thick] (2,1.5) -- (1,1);
\draw [thick] (2,1.5) -- (0,1);
\draw [thick] (2,.5) -- (1,1);
\draw [thick] (2,.5) -- (1,0);
\draw [thick] (2,-.5) -- (1,0);
\draw [thick] (2,-.5) -- (0,0);
\coordinate [label=$\mapsto$] (B) at (-1.5,.25);
\coordinate [label=$G$] (G) at (-3.5,-1.1);
\coordinate [label=$BL(G)$] (BLG) at (.5,-1.2);
\end{tikzpicture}
\caption{A graph and its Booth--Lueker graph. See Definition~\ref{defBL}.}
\label{fig:BL}
\end{center}
\end{figure}
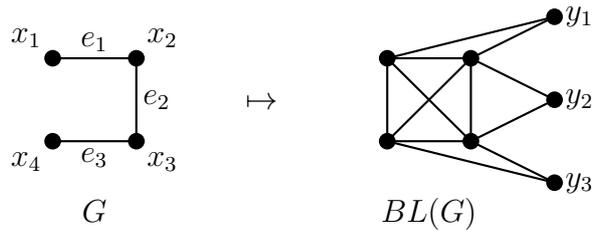

There is a bijection between squarefree ideals generated in degree~$2$ and finite simple graphs: to any graph~$G$, one associates the edge ideal $I_G$, which is generated by the monomials $x_ix_j$ for all the edges~$ij$ in the graph $G$. Fr\"oberg gave the following very famous characterization:
\begin{theorem}[Fr\"oberg, \cite{F90}]\label{frofro}
The edge ideal $I_G$ has $2$-linear resolution if and only if the complement of $G$ is chordal (i.e., every cycle of length at least four is cut by a chord, or in other words the only induced cycles in $G$ are triangles).
\end{theorem}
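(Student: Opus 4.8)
The plan is to reduce the statement to a topological fact about flag complexes via the Stanley--Reisner dictionary and Hochster's formula. First I would record that $I_G$ is the Stanley--Reisner ideal $I_\Delta$ of the \emph{independence complex} $\Delta$ of $G$ (its faces are the independent sets of $G$), which is the same thing as the flag (clique) complex of the complement $\bar G$; moreover for every $W\se\{x_1,\dots,x_n\}$ the induced subcomplex $\Delta_W$ is the flag complex of the induced subgraph $\bar G|_W$, and since $I_G$ has no generator of degree $1$ every vertex lies in $\Delta$. By Hochster's formula (see Chapter~5 of~\cite{BH93}) one has $\beta_{i,j}(S/I_G)=\sum_{|W|=j}\dim_\K\widetilde H_{j-i-1}(\Delta_W;\K)$, so $I_G$ has a $2$-linear resolution --- equivalently $\beta_{i,j}(S/I_G)=0$ for $i\ge1$ and $j\ne i+1$ --- if and only if $\widetilde H_k(\Delta_W;\K)=0$ for all $W$ and all $k\ge1$. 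Thus the theorem becomes: the flag complex of every induced subgraph of $\bar G$ is acyclic over $\K$ in positive degrees if and only if $\bar G$ is chordal.

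For the direction ``$\bar G$ chordal $\Rightarrow$ linear'', I would use that chordality passes to induced subgraphs and that, by Dirac's theorem, every chordal graph has a \emph{simplicial vertex} $v$, i.e.\ one whose neighbourhood is a clique. In the flag complex $X$ of such a graph the closed star of $v$ is a cone and the link of $v$ is the full simplex on $N(v)$; both are contractible, so $X$ collapses onto the flag complex of the graph with $v$ deleted. Iterating (the graph remains chordal at each step) shows $X$ is collapsible, hence $\K$-acyclic, which yields the required vanishing for all induced subgraphs of $\bar G$, in particular for all $\Delta_W$.

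For the converse I would argue by contraposition: if $\bar G$ is not chordal it contains an induced cycle $C$ of length $t\ge4$; setting $W=V(C)$, the complex $\Delta_W$ is the flag complex of $C$, which has no triangle and is therefore the $1$-dimensional cycle, a topological circle, so $\widetilde H_1(\Delta_W;\K)=\K\ne0$. By Hochster's formula $\beta_{t-2,t}(S/I_G)\ne0$ while $t\ne(t-2)+1$, so the resolution of $I_G$ is not $2$-linear. Since $\widetilde H_1(S^1)\ne0$ over every field and collapsible complexes are acyclic over every coefficient ring, this argument is insensitive to $\car\K$, consistent with the statement.

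I expect the only genuine difficulty to be the ``hard direction'' isolated in the second paragraph --- that the clique complex of a chordal graph is contractible (or at least $\K$-acyclic). The rest is bookkeeping with Hochster's formula and the translation between $G$, its complement, and the independence complex. The cleanest route is Dirac's theorem together with the observation that deleting a simplicial vertex is an elementary collapse of the clique complex; the point that needs care is that this operation stays legitimate at every stage, which is exactly where heredity of chordality is used.
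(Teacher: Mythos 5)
The paper does not prove Fr\"oberg's theorem: it is stated as a classical result and cited directly to~\cite{F90}, so there is no internal argument to compare against. Your blind proof is, however, the standard and correct approach via the Stanley--Reisner/Hochster dictionary, and the logic goes through. The translation of ``$2$-linear'' into ``$\widetilde H_k(\Delta_W;\K)=0$ for all $W$ and all $k\ge 1$'' is right (the only degrees that could be nonzero but are forced to vanish by linearity are exactly $k\ge 1$, and the $k=-1$ case is harmless because $I_G$ has no degree-$1$ generators, so $\Delta_W$ is never void for $W\ne\emse$). The converse via an induced $t$-cycle in $\bar G$ with $t\ge 4$ and the observation that its flag complex is $S^1$, giving $\beta_{t-2,t}(S/I_G)\ne0$, is clean and correct.

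The one place where your phrasing overshoots is the claim that the flag complex $X$ of a chordal graph ``is collapsible, hence $\K$-acyclic.'' That is false when $\bar G$ is disconnected (and, at the vertex-deletion step, when Dirac's simplicial vertex $v$ happens to be isolated, so $\text{lk}(v)=\{\emse\}$ is not a cone and the deletion is not an elementary collapse). What you actually need, and what the Mayer--Vietoris argument for the decomposition $X=(X\setminus v)\cup\mathrm{star}(v)$ with intersection $\mathrm{lk}(v)$ delivers in either case, is the weaker statement $\widetilde H_k(X)\iso\widetilde H_k(X\setminus v)$ for all $k\ge 1$: when $v$ has a neighbour both $\mathrm{star}(v)$ and $\mathrm{lk}(v)$ are contractible, and when $v$ is isolated the only change is in $\widetilde H_0$. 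Iterating gives $\widetilde H_k(X;\K)=0$ for $k\ge 1$, which is exactly what Hochster's formula requires; contractibility of $X$ itself is neither needed nor, in general, true. If you prefer to keep the collapsibility language, restrict to a connected component first (a simplicial vertex of a connected chordal graph on $\ge2$ vertices is not isolated, so the collapse is always legitimate) and then observe that higher homology of a disjoint union is the direct sum over components. With this adjustment your argument is complete and, as you note, independent of $\mathrm{char}\,\K$.
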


One can see that the complement of $BL(G)$ is chordal, and therefore the edge ideal~$I_{BL(G)}$ has a linear resolution. This and related matters are addressed in~\cite{EJO}. After that paper was made available, it was remarked that the Booth--Lueker construction could be interpreted as a map that associates to any squarefree monomial ideal generated in degree~$2$ a monomial ideal, also squarefree and generated in degree~$2$, with linear resolution. The problem of generalizing this kind of construction to any monomial ideal was raised then by Aldo Conca, and
one can see that the linearization \emph{is} such a generalization.

\medskip

We conclude  by remarking that, just as mentioned above for graphs and quadratic squarefree ideals, one can define bijections between squarefree monomial ideals of arbitrary degree and combinatorial objects (simplicial complexes or hypergraphs), and the linearity of the resolution of the ideal has a combinatorial meaning.

\noindent
\emph{Simplicial complexes.} A fundamental bridge between commutative algebra and combinatorial topology is provided by the Stanley--Reisner correspondence, a bijection between squarefree monomial ideals in $\K[x_1,\dots,x_n]$ and simplicial complexes on the set~$[n]=\{1,\dots,n\}$. For the details, see for instance Chapter~1 of~\cite{MiSt}, Chapter~5 of~\cite{BH93}, or Chapter~8 of~\cite{HH}. In short, to any simplicial complex~$\Delta$ one associates the Stanley--Reisner ideal $I_\Delta$, and combinatorial properties of $\Delta$ correspond to algebraic properties of $I_\Delta$. In particular, recall that $\Lin(I)$ has a stronger property than linear resolution: it has linear quotients. The most relevant equivalence, from the point of view of this paper, is the following well-known result:
\begin{proposition}[part of Proposition~8.2.7 of~\cite{HH}]
The ideal $I_\Delta$ has linear quotients if and only if the Alexander dual $\Delta^\vee$ of $\Delta$ is shellable.
\end{proposition}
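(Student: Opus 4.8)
The plan is to derive this from Alexander duality, translating the algebraic notion of linear quotients directly into the combinatorial notion of shellability (in the non-pure sense of Bj\"orner--Wachs, since $I_\Delta$ need not be generated in a single degree). First I would fix the dictionary: if $N_1,\dots,N_m$ denote the minimal non-faces of $\Delta$, then $G(I_\Delta)=\{x_{N_1},\dots,x_{N_m}\}$, where $x_N:=\prod_{i\in N}x_i$, and the facets of $\Delta^\vee$ are precisely the complements $F_i:=[n]\setminus N_i$ for $i=1,\dots,m$. In particular, choosing an order on the minimal generators of $I_\Delta$ is the same as choosing an order on the facets of $\Delta^\vee$, so it suffices to show that a fixed such order $x_{N_1},\dots,x_{N_m}$ gives linear quotients if and only if $F_1,\dots,F_m$ is a shelling of $\Delta^\vee$.

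Next I would compute the relevant colon ideals. Since the generators are squarefree, for each $k$ one has, checking on monomial generators,
$$(x_{N_1},\dots,x_{N_{k-1}}):x_{N_k}=\sum_{j<k}\big(x_{N_j}:x_{N_k}\big)=\sum_{j<k}\big(x_{N_j\setminus N_k}\big).$$
A monomial ideal is generated by linear forms exactly when it is generated by variables, and $x_s$ belongs to the ideal on the right-hand side if and only if some $x_{N_\ell\setminus N_k}$ with $\ell<k$ divides it, i.e.\ $N_\ell\setminus N_k=\{s\}$ (the difference is nonempty because minimal non-faces form an antichain). Hence the $k$-th colon ideal is generated by linear forms precisely when: for every $j<k$ there exist $\ell<k$ and $s\in N_j\setminus N_k$ with $N_\ell\setminus N_k=\{s\}$.

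Finally I would pass to the Alexander dual via complementation. From $N_i=[n]\setminus F_i$ we get $N_j\setminus N_k=F_k\setminus F_j$ and $N_\ell\setminus N_k=F_k\setminus F_\ell$, so the condition above reads: for every $j<k$ there exist $\ell<k$ and $s\in F_k\setminus F_j$ with $F_k\setminus F_\ell=\{s\}$. I would then observe that this is word for word the defining condition of a shelling for the facet order $F_1,\dots,F_m$: the equality $F_k\setminus F_\ell=\{s\}$ means that $F_\ell\cap F_k=F_k\setminus\{s\}$ is a codimension-one face of $F_k$ lying in an earlier facet, while $s\in F_k\setminus F_j$ means $F_j\cap F_k\subseteq F_k\setminus\{s\}$; requiring such $\ell$ and $s$ for every $j<k$ is exactly saying that $\langle F_1,\dots,F_{k-1}\rangle\cap\langle F_k\rangle$ is generated by codimension-one faces of $F_k$. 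Running this over all $k$ finishes the equivalence.

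The main obstacle is really just bookkeeping precision: one has to use the non-pure version of shellability and make sure the two combinatorial conditions coincide on the nose --- in particular the reductions ``generated by linear forms $\Leftrightarrow$ generated by variables'' for monomial ideals and ``$N_\ell\setminus N_k$ is a singleton $\Leftrightarrow$ $F_\ell\cap F_k$ is a codimension-one face of $F_k$''. Beyond that the argument is the standard Alexander-duality dictionary; since the statement is quoted from Proposition~8.2.7 of~\cite{HH}, one may of course simply cite it, but the translation above is short enough to spell out.
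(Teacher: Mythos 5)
Your argument is correct, and it supplies a proof for a result the paper only cites from Herzog--Hibi (Proposition~8.2.7 of~\cite{HH}) without spelling it out. The Alexander-duality dictionary you set up is the standard one: the minimal generators $x_{N_1},\dots,x_{N_m}$ of $I_\Delta$ correspond bijectively to the facets $F_i=[n]\setminus N_i$ of $\Delta^\vee$, the antichain property of minimal non-faces guarantees $N_j\setminus N_k\ne\emse$, and complementation turns $N_j\setminus N_k$ into $F_k\setminus F_j$. The colon-ideal computation and the reduction ``generated by linear forms $\Leftrightarrow$ generated by variables'' for a monomial ideal are both sound.

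One economy worth noting: the combinatorial condition you derive from scratch --- for every $j<k$ there exist $\ell<k$ and $s$ with $N_\ell\setminus N_k=\{s\}$ and $s\in N_j\setminus N_k$ --- is precisely the squarefree specialization of Lemma~\ref{amazinglemma} (Lemma~8.2.3 of~\cite{HH}), which the paper already records. Citing that lemma would let you skip the ``$\sum_{j<k}(x_{N_j\setminus N_k})$ is generated by variables iff~\dots'' step and go straight to the complementation argument, which is really the only content-bearing part of the translation. Also, for full rigor it is worth stating explicitly which (non-pure, Bj\"orner--Wachs) definition of shelling you are unwinding, namely that $\langle F_1,\dots,F_{k-1}\rangle\cap\langle F_k\rangle$ is pure of dimension $\dim F_k-1$; you gesture at this but a reader benefits from seeing the definition before the word-for-word match is claimed.
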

Hence, in the squarefree case, $\Lin$  can be interpreted as a map taking a pure simplicial complex and returning one, with more vertices, that has a shellable dual complex. Again, for the details we refer to Section~8.2 of~\cite{HH}.

\noindent
\emph{Hypergraphs.} Another bijection, which is less topological and more combinatorial in flavour, is between squarefree monomial ideals and hypergraphs. This is the topic of Section~\ref{hyplinres}. To each hypergraph one can associate a monomial ideal, and in~\cite{HV} the authors give a partial characterization of the hypergraphs whose associated ideal has a linear resolution. So the map $\Lin$ can also be seen as taking an arbitrary hypergraph and returning one of those ``linear hypergraphs''. See Section~\ref{hyplinres} for the details.

\subsubsection*{Acknowledgements}

I am very grateful to three people who taught me many things over the last years: Aldo Conca, who suggested in the first place the problem of defining some sort of linearization and later gave me good advice about it; my supervisor Alexander Engstr\"om, who guided me, helped me give some structure to the paper, and found counterexamples for many wrong tiny conjectures that I came up with; and Gunnar Fl\o ystad, who  very kindly helped me by posing several questions and explained to me how to think in a conceptual way about one of the main results.

\medskip

My research was funded by the Vilho, Yrjö and Kalle Väisälä Fund.

\medskip

All the experiments and computations that led to finding the results in the paper were carried out with the computer algebra system CoCoA, see~\cite{CoCoA}. For a large part of the classical results and for further reading about monomial ideals and free resolutions, we mainly refer to the excellent monographs~\cite{HH}, by Herzog and Hibi, and~\cite{Pe}, by Peeva.


\section{Algebraic background}\label{randomtools}

Everywhere in the paper $\K$ will be be a field and $S:=\K[x_1,\dots,x_n]$ will be the polynomial ring in $n$ variables over $\K$ equipped with the standard grading, that is, with each variable of degree~$1$. Unless otherwise stated, $\K$ will have characteristic~{zero}. By $\N$ we mean the set of non-negative integers, so that in particular $0\in\N$.

\subsection{Free resolutions and monomial ideals}

Given a graded $S$-module $M=\bigoplus_{i\in\Z}M_i$ and an integer $m\in\Z$, we denote
$$M(-m):=\bigoplus_{i\in\Z}M(-m)_i,\quad\text{where}\quad M(-m)_i:=M_{i-m},$$
the module $M$ \emph{shifted $m$ degrees}. In particular we will consider $M=S$.
We recall that a \tbs{free resolution} over $S$ of an $S$-module $M$ consists of a sequence~$(F_i)_{i\in\N}$ of free $S$-modules and homomorphisms $d_i\:F_i\to F_{i-1}$ such that there exists a surjective homomorphism $\vep\:F_0\to M$ and such that
$$\dots\la F_2\stackrel{d_2}\la F_1\stackrel{d_1}\la F_0\stackrel\vep\la M\la0$$
is an exact complex. The modules $M$ we will consider are finitely generated and graded.  In particular, the monomial ideals of $S$ are such modules.
A free resolution is \tbs{graded} if the maps preserve the degrees of the elements, and a it is \tbs{minimal} if $d_i(F_i)\se\mm F_{i-1}$ for each $i>0$, where $\mm=(x_1,\dots,x_n)$ is the irrelevant maximal ideal of $S$. A minimal graded free resolution is unique up to isomorphism, and sometimes we call it ``the'' minimal resolution. If we write each module of the minimal resolution as
$$F_i=\bigoplus_{j\in\Z}S(-j)^{\beta_{ij}},$$
then the natural numbers $\beta_{ij}$, also denoted by $\beta_{ij}(M)$, are invariants of $M$ called the \tbs{graded Betti numbers} of $M$.
We arrange the graded Betti numbers in the so-called \tbs{Betti table} of $M$, so that the entry in the $i$-th column and the $j$-th row is $\beta_{i,i+j}(M)$. The \tbs{projective dimension} of $M$ is the highest value of $i$ such that there is a non-zero $\beta_{i,i+j}(M)$, for some $j$. The $i$-th \tbs{total Betti number} of $M$ is $\beta_i(M):=\sum_{j\in\Z}\beta_{ij}(M)$.

\begin{definition}\label{deflinearreso}
A finitely generated graded $S$-module $M$ is said to have a \tbs{$d$-linear resolution} if $\beta_{i,j}(M)=0$ for $j\ne i+d$. That is, if all the non-zero entries of the Betti table of $M$ are in the $d$-th row.
\end{definition}

With few exceptions, the $S$-modules whose resolution we will consider in the paper will  be monomial ideals of $S$. 

\begin{notation}
Recall that for a monomial ideal $I$ there is a unique minimal system of monomial generators of $I$, consisting of the monomials in $I$ which are minimal with respect to the divisibility relation. We denote the minimal set of monomial generators by $G(I)$.
\end{notation}

Recall that a monomial $u$ belongs to a monomial ideal~$I$ if and only if $u$ is divided by some monomial in~$G(I)$.

Recall moreover that a  monomial ideal $I$ is \tbs{equigenerated} if all the elements in $G(I)$ have the same degree, and
the \tbs{support} of a monomial $u$, denoted by $\Supp(u)$, is the set of variables that divide $u$.

\subsection{Ideals with linear quotients}\label{basicsonlinearquotients}

Given two ideals $I$ and $J$ in any ring $S$, we call 
$$I:J:=\{f\in S\mid \text{$fg\in I$ for all $g\in J$}\}$$
the \tbs{colon} (or \tbs{quotient}) \tbs{ideal of $I$ with respect to $J$}. In case $J=(g)$ is a principal ideal, we denote $I:(g)=I:g$.

\begin{remark}\label{remarkcolon}
Given a monomial ideal $I\se S=\K[x_1,\dots,x_n]$ and a monomial $u\in S$, it is a well-known fact that
$$I:u=\Big(\frac f{\gcd(f,u)}\mid f\in G(I)\Big),$$
where the generators on the right-hand side might be not minimal. For a proof, see for instance Proposition~1.2.2 of~\cite{HH}.
\end{remark}

\begin{definition}
Let $I\se S=\K[x_1,\dots,x_n]$ be a homogeneous ideal. We say that $I$ has \tbs{linear quotients} if there exists a system of homogeneous generators $f_1,f_2,\dots,f_m$ of $I$ such that the colon ideal $(f_1,\dots,f_{k-1}):f_k$ is generated by linear forms for all $k\in\{2,\dots,m\}$.
\end{definition}

\begin{example}\label{linearquotexam}
\emph{The order in which we take the generators matters.} In the ring $\K[x_1,\dots,x_5]$, take  $I=(x_1x_2x_3,x_3x_4x_5,x_2x_3x_4)$. If we took the generators in the given order,  we would get in particular the colon ideal $(x_1x_2x_3):x_3x_4x_5=(x_1x_2)$,
whose only generator is quadratic. On the other hand, if we order the generators as
$$f_1=x_1x_2x_3,\quad f_2=x_2x_3x_4,\quad f_3=x_3x_4x_5,$$
then we get $(f_1):f_2=(x_1)$ and $(f_1,f_2):f_3=(x_1)$.
So indeed $I$ has linear quotients.
\end{example}



\begin{proposition}[Proposition 8.2.1 of~\cite{HH}]\label{ifquotthenres}
Let $I\se S=\K[x_1,\dots,x_n]$ be a homogeneous ideal equigenerated in degree $d$ and with linear quotients. Then $I$ has a $d$-linear resolution.
\end{proposition}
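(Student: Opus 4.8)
The plan is to induct on the number $m$ of generators appearing in the linear-quotients ordering $f_1,\dots,f_m$. The base case $m=1$ is immediate: the principal ideal $(f_1)$ with $\deg f_1=d$ has minimal resolution $0\la S(-d)\la(f_1)\la0$, which is $d$-linear.

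For the inductive step, set $I'=(f_1,\dots,f_{m-1})$. Since $f_1,\dots,f_{m-1}$ is again a valid linear-quotients ordering (the colon conditions for $I'$ form a subset of those for $I$) and $I'$ is still equigenerated in degree $d$, the inductive hypothesis gives that $I'$ has a $d$-linear resolution. Multiplication by $f_m$ then yields a short exact sequence of graded modules
$$0\la\big(S/(I':f_m)\big)(-d)\xrightarrow{\ \cdot\, f_m\ }S/I'\la S/I\la0.$$
The crucial input is that, by hypothesis, $I':f_m$ is generated by linear forms; a minimal set of such generators is linearly independent, hence a regular sequence in $S$, so $S/(I':f_m)$ is resolved by the corresponding Koszul complex, which is a linear resolution. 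After the shift $(-d)$, this resolution is supported exactly on the strand $j=i+d$.

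I would then form the mapping cone of a lift of the map $\cdot\, f_m$ to the minimal free resolutions of the two outer terms. This produces a (possibly non-minimal) free resolution of $S/I$ whose $i$-th module is $F'_i\oplus G_{i-1}(-d)$, where $F'_\bullet$ resolves $S/I'$ and $G_\bullet$ resolves $S/(I':f_m)$. Comparing graded ranks gives
$$\beta_{ij}(S/I)\le\beta_{ij}(S/I')+\beta_{i-1,j}\big((S/(I':f_m))(-d)\big),$$
and, for $i\ge1$, both terms on the right vanish unless $j=i+d-1$. Hence $\beta_{ij}(S/I)=0$ for $j\ne i+d-1$ when $i\ge1$, which translates into $\beta_{ij}(I)=\beta_{i+1,j}(S/I)=0$ for $j\ne i+d$; that is, $I$ has a $d$-linear resolution. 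Equivalently, one can run the long exact sequence in $\Tor^S_\bullet(-,\K)$ attached to the displayed short exact sequence and compare graded strands degree by degree, reaching the same conclusion.

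The only slightly delicate point is the bookkeeping of the degree shifts — one must check that the shifted Koszul resolution of $S/(I':f_m)$ and the resolution of $S/I'$ live on the same diagonal, so that their contributions to the cone do not spread the Betti table off a single row. Beyond this there is no real obstacle: the argument is a clean induction once the short exact sequence and the linear (Koszul) resolution of the colon ideal are in place.
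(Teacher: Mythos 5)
The paper does not prove this statement — it cites it directly as Proposition~8.2.1 of \cite{HH} — so there is no internal proof to compare against. Your argument is correct and is essentially the standard iterated-mapping-cone proof used in \cite{HH} (following Herzog--Takayama): the short exact sequence $0\to\big(S/(I':f_m)\big)(-d)\to S/I'\to S/I\to0$ is right, $I':f_m$ is generated by a regular sequence of linearly independent linear forms whose Koszul complex is the (linear) minimal resolution, and the mapping cone then keeps the Betti table of $S/I$ in a single strand, giving $\beta_{ij}(I)=0$ for $j\ne i+d$ by induction on the number of generators. The one implicit point worth making explicit is that, in the equigenerated setting, a linear-quotients ordering is automatically a minimal generating set (otherwise some $(f_1,\dots,f_{k-1}):f_k$ would equal $S$, which is not generated by linear forms), so $f_m\notin I'$ and the induction is well-founded; with that noted, the proof is complete.
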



\begin{corollary}[Corollary 8.2.2 of~\cite{HH}]\label{solitocorollariork}
Let $I\se S$ be an equigenerated homogeneous ideal with linear quotients. For each $k\in\{1,\dots,m\}$, let $r_k$ be the number of minimal generators of $(f_1,\dots,f_{k-1}):f_k$. Then
$$\beta_i(I)=\sum_{k=1}^m\binom{r_k}i.$$
In particular, $\mathrm{projdim}(I)=\max\{r_1,r_2,\dots,r_m\}$.
\end{corollary}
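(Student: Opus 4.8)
The plan is to read off the total Betti numbers directly from an explicit minimal free resolution built from the linear-quotients filtration, so the real content is already contained in the proof of Proposition~\ref{ifquotthenres}. Concretely, one proceeds by induction on $m$, the number of generators. Write $I_k=(f_1,\dots,f_k)$ and let $L_k=I_{k-1}:f_k$, which by hypothesis is generated by $r_k$ linear forms, say $x_{i_1},\dots,x_{i_{r_k}}$; since $f_k$ has the same degree $d$ as all the other generators and $I_{k-1}:f_k$ is generated in degree $1$, there is a short exact sequence
$$
0\la S/L_k(-d)\stackrel{\cdot f_k}{\la} S/I_{k-1}\la S/I_k\la 0.
$$
The module $S/L_k$ is a quotient of $S$ by an ideal generated by $r_k$ of the variables, hence is resolved by the Koszul complex on those $r_k$ linear forms, whose $i$-th free module has rank $\binom{r_k}{i}$. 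Shifting by $-d$ places all of that Koszul complex in a single diagonal of the Betti table, so it contributes exactly $\binom{r_k}{i}$ to the $i$-th total Betti number of the ``new'' piece.

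The key step is to splice these short exact sequences into a single resolution of $I=I_m$ (equivalently of $S/I$) and to check that the splicing is minimal, so that no cancellation occurs and the Betti numbers genuinely add up. This is exactly the iterated mapping-cone construction underlying Proposition~\ref{ifquotthenres}: starting from the Koszul resolution of $S/L_2(-d)$ mapping to the trivial resolution of $S/I_1=S/(f_1)$, one forms the mapping cone to resolve $S/I_2$, then maps in the Koszul resolution of $S/L_3(-d)$ and takes the cone again, and so on. At each stage the comparison map $S/L_k(-d)\to S/I_{k-1}$ lifts to a map of complexes whose entries lie in $\mm$ — here one uses that $f_k\in\mm\, F$ in the appropriate degree and that the Koszul differentials on linear forms are already in $\mm$ — so the cone is again a minimal complex. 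Hence
$$
\beta_i(S/I_m)=\sum_{k=1}^m\beta_i\big(S/L_k(-d)\,\text{-part}\big)=\sum_{k=1}^m\binom{r_k}{i},
$$
where for $k=1$ we take $L_1$ to be the zero ideal with $r_1=0$, contributing $\binom{0}{i}=\delta_{i,0}$ to $S/I$, i.e.\ nothing to $I$ itself in homological degree $\ge 1$; reindexing from $S/I$ to $I$ shifts homological degree by one and gives the stated formula $\beta_i(I)=\sum_{k=1}^m\binom{r_k}{i}$.

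The main obstacle is precisely the minimality of the iterated mapping cone: a mapping cone of minimal complexes need not be minimal in general, and one must invoke the specific structure here (the maps being multiplication by a form of degree $d\ge 1$ together with linear Koszul differentials, all landing in $\mm$) to conclude that no unit entries appear. Once minimality is granted, the projective-dimension statement is immediate: $\pd(I)$ is the largest $i$ with $\beta_i(I)\ne 0$, and since $\binom{r_k}{i}\ne 0$ exactly when $i\le r_k$, the top nonvanishing index is $\max\{r_1,\dots,r_m\}$. I would also remark that the equigeneration hypothesis is what guarantees the shift in the short exact sequence is by the constant $d$, which is what makes the Betti numbers depend only on the $r_k$ and not on finer degree data — though for the total Betti numbers this is not strictly needed, it is what aligns this corollary with Proposition~\ref{ifquotthenres}.
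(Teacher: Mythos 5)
The paper states this as a citation to Corollary~8.2.2 of~\cite{HH} and gives no proof of its own; your argument is exactly the standard iterated mapping-cone proof used in that reference, and the overall approach and final formula are correct.

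Two points are worth tightening. First, your displayed identity $\beta_i(S/I_m)=\sum_{k=1}^m\binom{r_k}{i}$ is off by one: the mapping cone places $F_{i-1}$ (the $(i-1)$-st Koszul module, of rank $\binom{r_k}{i-1}$) in homological degree $i$ of the cone, so the correct intermediate statement is $\beta_i(S/I_m)=\sum_{k=1}^m\binom{r_k}{i-1}$ for $i\ge 1$; the subsequent reindexing $\beta_i(I)=\beta_{i+1}(S/I)$ then yields the stated formula. As written, the two steps of your argument silently cancel rather than compose. Second, the minimality claim for the iterated cone deserves the degree bookkeeping you allude to but do not actually carry out: by induction and equigeneration, $G_i$ (the $i$-th module of the minimal resolution of $S/I_{k-1}$) sits in shift $-(d+i-1)$ for $i\ge 1$ and $G_0=S$, while the shifted Koszul module $F_i$ sits in shift $-(d+i)$; hence each comparison map $\phi_i\colon F_i\to G_i$ is forced to have degree $1$ (and $\phi_0$ degree $d\ge 1$), so it lies in $\mathfrak m\,G_i$. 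It is this shift comparison --- not the fact that the Koszul differentials themselves are linear --- that rules out unit entries. With those two repairs your proof is the one in the cited source.
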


\begin{example}
Continuing Example~\ref{linearquotexam}, with the notation of Corollary~\ref{solitocorollariork} we get $r_1=0$, $r_2=1$ and $r_3=1$. So the projective dimension of $I$ is~$1$ and we get
$$\beta_0(I)=\sum_{k=1}^3\binom{r_k}0=1+1+1=3,\qquad\beta_1(I)=\sum_{k=1}^3\binom{r_k}1=0+1+1=2.$$
\end{example}

Next we recall a criterion that follows directly from Remark~\ref{remarkcolon}.

\begin{lemma}[Lemma 8.2.3 of~\cite{HH}]\label{amazinglemma}
A monomial ideal  $J\se\K[x_1,\dots,x_n]$ has linear quotients with respect to the monomial generators $u_1,u_2,\dots,u_t$ of $J$ if and only if
for all $j$ and $i$ with $1\le j<i\le t$ there exist $k<i$ and $\ell$ such that
$$\frac{u_k}{\gcd(u_k,u_i)}=x_\ell\quad\text{and\quad$x_\ell$ divides }\frac{u_j}{\gcd(u_j,u_i)}.$$
\end{lemma}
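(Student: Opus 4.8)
The plan is to derive the criterion directly from the formula for colon ideals recalled in Remark~\ref{remarkcolon}. Fix an index $i$ with $2 \le i \le t$. By definition, $J$ has linear quotients with respect to $u_1, \dots, u_t$ precisely when, for every such $i$, the ideal $(u_1, \dots, u_{i-1}) : u_i$ is generated by linear forms. By Remark~\ref{remarkcolon} applied to the monomial $u_i$, we have
$$(u_1,\dots,u_{i-1}):u_i=\Big(\frac{u_j}{\gcd(u_j,u_i)}\ \Big|\ j=1,\dots,i-1\Big).$$
So the whole statement reduces to a single clean question: \emph{when is a monomial ideal of the form $(w_1, \dots, w_{i-1})$ generated by linear forms?} The first step is to observe that a monomial ideal is generated by linear forms if and only if \emph{each} of its given monomial generators $w_j$ is divisible by one of the variables appearing among the minimal generators; equivalently, for each $j$ there is a variable $x_\ell$ that lies in the ideal $(w_1,\dots,w_{i-1})$ and divides $w_j$. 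Indeed, if the ideal is generated by linear forms $x_{\ell_1}, \dots, x_{\ell_r}$, then each monomial generator $w_j$, being in the ideal, must be a multiple of some $x_{\ell_s}$; conversely, if every $w_j$ is a multiple of some variable in the ideal, then the variables so obtained already generate the ideal, since each $w_j$ is redundant.

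The second step is to translate "$x_\ell \in (w_1, \dots, w_{i-1})$ and $x_\ell \mid w_j$" back into the language of the $u$'s. Writing $w_k = u_k / \gcd(u_k, u_i)$, the condition $x_\ell \in (w_1,\dots,w_{i-1})$ means $w_k \mid x_\ell$ for some $k < i$, i.e. $w_k = x_\ell$ (a monomial dividing a variable is either $1$ or that variable, and $w_k = 1$ would force $u_k \mid u_i$, contradicting minimality of the generating set — or, if one does not assume minimality, the case $w_k=1$ makes the colon ideal the whole ring and the statement is vacuous or handled trivially). This gives exactly the first displayed equation $u_k/\gcd(u_k,u_i) = x_\ell$. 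The condition $x_\ell \mid w_j = u_j/\gcd(u_j,u_i)$ is the second displayed condition verbatim. Quantifying over all $j < i$ and then over all $i$, and renaming so that the roles of $i$ and $j$ match the statement, yields the claimed equivalence.

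I do not expect a genuine obstacle here; the lemma is essentially a bookkeeping unwinding of Remark~\ref{remarkcolon}. The one point that needs a little care is the reduction in the first step — making precise that "generated by linear forms" is equivalent to the pointwise condition that each monomial generator is a multiple of a variable belonging to the ideal — and, relatedly, being careful about whether the $u_j/\gcd(u_j,u_i)$ are assumed to be a \emph{minimal} generating set of the colon ideal. The cleanest route is to note that the divisibility characterization does not depend on minimality: it is phrased entirely in terms of the (possibly redundant) generators $w_1, \dots, w_{i-1}$, so no minimization step is required, and the equivalence goes through directly.
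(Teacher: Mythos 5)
Your argument is correct and matches the paper's intent exactly: the paper states that this lemma "follows directly from Remark~\ref{remarkcolon}" and then cites Herzog--Hibi rather than giving a proof, and your unwinding via the observation that a monomial ideal is generated by variables precisely when each monomial generator is divisible by a variable already in the ideal is the standard way to make that "directly" precise. The worry you raise about non-minimal generating sets and $w_k=1$ is moot here because $u_1,\dots,u_t$ are the minimal monomial generators of $J$, so $u_k\nmid u_i$ for $k\neq i$.
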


\subsection{Cropping the exponents from above}

\begin{notation}\label{notationcropping}
Let $I\se S= \K[x_1,\dots,x_n]$ be a monomial ideal with minimal system of generators $G(I)=\{f_1,\dots,f_s\}$. Write $f_i=x_1^{a_{1i}}\cdots x_n^{a_{ni}}$ for all $i$. Fix a vector of non-negative integers $v=(v_1,\dots,v_n)\in\N^n$. We use $v$ to ``crop'' the ideal $I$ by keeping only the generators of $I$ whose vector of exponents is componentwise at most as large as the vector $v$: that is,  we define
$$I_{\le v}:=(f_p \mid a_{pi}\le v_i\text{ for all $i=1,\dots,n$})$$
and we say that $I_{\le v}$ is obtained by \tbs{cropping $I$ by $v$}.
\end{notation}

This subsection is motivated by the following well-known result: if $I\se \K[x_1,\dots,x_n]$ is a monomial ideal which has linear resolution, then $I_{\le v}$ still has linear resolution for any $v\in\N^n$ (see Section~56 of~\cite{Pe} for a general treatment). In what follows we  prove an analogous result for linear quotients, Proposition~\ref{cropparequozientilinear}. Before we state it, observe that with the notation above, by Remark~\ref{remarkcolon}, we have
$$(f_1,\dots,f_{i-1}):f_i=\Big(\frac{f_k}{\gcd(f_k,f_i)}\mid k\in\{1,\dots,i-1\}\Big),$$
where more explicitly we may write
\begin{align*}
\frac{f_k}{\gcd(f_k,f_i)}&=\frac{x_1^{a_{1k}}\cdots x_n^{a_{nk}}}
{x_1^{\min\{a_{1k},a_{1i}\}}\cdots x_n^{\min\{a_{nk},a_{ni}\}}}\\
&=x_1^{a_{1k}-\min\{a_{1k},a_{1i}\}}\cdots x_n^{a_{nk}-\min\{a_{nk},a_{ni}\}}.
\end{align*}

\begin{proposition}\label{cropparequozientilinear}
Let $I=(f_1,\dots,f_s)\subset\K[x_1,\dots,x_n]$ be a monomial ideal with linear quotients with respect to the given ordering of the generators. Fix $v\in\N^n$ and denote $I_{\le v}=(f_{b_1},\dots,f_{b_t})$, where $b_1<\dots<b_t$ are the indexes of the generators that survided the cropping. Then $I_{\le v}$ has linear quotients with respect to $f_{b_1},\dots,f_{b_t}$.
\end{proposition}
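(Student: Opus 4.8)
The plan is to verify the combinatorial criterion of Lemma~\ref{amazinglemma} for $I_{\le v}$ directly, by transporting it from $I$. Since $I$ has linear quotients with respect to $f_1,\dots,f_s$, the criterion holds: for every pair $j < i$ (in the original indexing) there exist $k < i$ and a variable $x_\ell$ with $\frac{f_k}{\gcd(f_k,f_i)} = x_\ell$ and $x_\ell \mid \frac{f_j}{\gcd(f_j,f_i)}$. I want to show the same statement for the sublist $f_{b_1},\dots,f_{b_t}$.

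First I would fix two surviving generators $f_{b_q}$ and $f_{b_p}$ with $b_q < b_p$ (playing the roles of $f_j$ and $f_i$). Applying the criterion for $I$ to the pair $(b_q, b_p)$ gives an index $k < b_p$ and a variable $x_\ell$ with $\frac{f_k}{\gcd(f_k, f_{b_p})} = x_\ell$ and $x_\ell \mid \frac{f_{b_q}}{\gcd(f_{b_q}, f_{b_p})}$. The key claim is that such an $f_k$ can be chosen among the \emph{surviving} generators, i.e.\ that $k = b_r$ for some $r$ with $b_r < b_p$. Once that is established, the criterion of Lemma~\ref{amazinglemma} holds verbatim for the cropped list, and the proposition follows.

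The heart of the argument — and the step I expect to be the main obstacle — is proving that $f_k$ survives the cropping. The condition $\frac{f_k}{\gcd(f_k,f_{b_p})} = x_\ell$ means, in terms of exponent vectors, that $f_k$ and $f_{b_p}$ have the same exponent on every variable except $x_\ell$, where the exponent of $f_k$ exceeds that of $f_{b_p}$ by exactly $1$; moreover, since both $f_k$ and $f_{b_p}$ have degree $d$ (using that $I$ is equigenerated — or, if one does not assume that, one instead argues with the explicit exponent description preceding the proposition), the exponent of $f_k$ on $x_\ell$ is one more than that of $f_{b_p}$ while one other variable $x_{\ell'}$ has exponent one less; in any case, $\mdeg(f_k) \le \mdeg(f_{b_p}) + e_\ell$ componentwise. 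Since $f_{b_p}$ survives, $\mdeg(f_{b_p}) \le v$ componentwise; the potential problem is only at coordinate $\ell$, where $f_k$ might have exponent $v_\ell + 1$. I would rule this out by using the \emph{minimality} of $x_\ell \mid \frac{f_{b_q}}{\gcd(f_{b_q}, f_{b_p})}$: this forces the $x_\ell$-exponent of $f_{b_q}$ to be strictly larger than that of $f_{b_p}$, hence the $x_\ell$-exponent of $f_{b_q}$ is at least (that of $f_{b_p}$) $+\,1 = $ ($x_\ell$-exponent of $f_k$); since $f_{b_q}$ survives the cropping, its $x_\ell$-exponent is $\le v_\ell$, and therefore the $x_\ell$-exponent of $f_k$ is $\le v_\ell$ as well. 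Combined with the agreement of $f_k$ and $f_{b_p}$ on all other variables, this gives $\mdeg(f_k) \le v$ componentwise, so $f_k$ survives; write $k = b_r$, and note $b_r = k < b_p$ as required.

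Finally I would package this: having produced, for each pair $b_q < b_p$, an index $b_r < b_p$ among the survivors and a variable $x_\ell$ witnessing the condition of Lemma~\ref{amazinglemma} for the list $f_{b_1}, \dots, f_{b_t}$, I conclude by that lemma that $I_{\le v}$ has linear quotients with respect to $f_{b_1}, \dots, f_{b_t}$. One small point to state carefully at the outset is that $\{f_{b_1},\dots,f_{b_t}\}$ is indeed the minimal generating set of $I_{\le v}$ (no cropped-in generator divides another, since they were minimal generators of $I$ to begin with), so that Lemma~\ref{amazinglemma} applies on the nose.
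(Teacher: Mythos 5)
Your proposal is correct and follows the paper's proof essentially step for step: both fix a pair of surviving generators, invoke Lemma~\ref{amazinglemma} for $I$ to obtain a witness $f_k$ with $\frac{f_k}{\gcd(f_k,f_{b_p})}=x_\ell$ and $x_\ell\mid\frac{f_{b_q}}{\gcd(f_{b_q},f_{b_p})}$, and then use the survival of $f_{b_q}$ (which gives $a_{\ell b_q}\le v_\ell$) together with $a_{\ell k}=a_{\ell b_p}+1\le a_{\ell b_q}$ to conclude $f_k$ also survives. One minor imprecision: the condition $\frac{f_k}{\gcd(f_k,f_{b_p})}=x_\ell$ does not force $f_k$ and $f_{b_p}$ to have the \emph{same} exponent on variables other than $x_\ell$ — it only gives $a_{jk}\le a_{j b_p}$ for $j\ne\ell$ — but since that is exactly the inequality you actually use (and which you later state correctly as $\mdeg(f_k)\le\mdeg(f_{b_p})+e_\ell$), the argument is unaffected.
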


\begin{proof}
By Lemma~\ref{amazinglemma}, $I_{\le v}$ has linear quotients with respect to $f_{b_1},\dots,f_{b_t}$ if and only if, for all $1\le j<i\le t$, there exist $k<i$ and $\ell$ such that
\begin{equation}\label{condlemmaquozienti}
\frac{f_{b_k}}{\gcd(f_{b_k},f_{b_i})}=x_\ell\quad\text{and\quad$x_\ell$ divides }\frac{f_{b_j}}{\gcd(f_{b_j},f_{b_i})}.
\end{equation}
We will prove~\eqref{condlemmaquozienti} starting from the analogous condition for $I$. Fix $j<i$, or equivalently $b_j<b_i$. Because $I$ has linear quotients with respect to the given order of the generatos, there exist $p<b_i$ and  $\ell$ such that
\begin{equation}\label{dalleipotesisuI}
\frac{f_p}{\gcd(f_p,f_{b_i})}=x_\ell\quad\text{and\quad$x_\ell$ divides }\frac{f_{b_j}}{\gcd(f_{b_j},f_{b_i})}.
\end{equation}
This would prove~\eqref{condlemmaquozienti} if we could show that $p=b_k$ for some $k$, or in other words we want to show that the vector of exponents of $f_p=x_1^{a_{1p}}\cdots x_n^{a_{np}}$ is below the cropping vector $v=(v_1,\dots,v_n)$. 
The first of the two things in~\eqref{dalleipotesisuI} more explicitly means that
$$\begin{cases}
a_{jp}=\min\{a_{jp},a_{jb_i}\}&\text{for $j\in[n]\setminus\{\ell\}$,}\\
a_{\ell p}=\min\{a_{\ell p},a_{\ell b_i}\}+1,
\end{cases}$$ that is,
$$\begin{cases}
a_{jp}\le a_{jb_i}\ (\le v_j)&\text{for $j\in[n]\setminus\{\ell\}$,}\\
a_{\ell p}=a_{\ell b_i}+1.
\end{cases}$$
The second part of~\eqref{dalleipotesisuI} is more explicitly $a_{\ell b_j}-\min\{a_{\ell b_j},a_{\ell b_i}\}\ge1$, or equivalently $a_{\ell b_j}\ge a_{\ell b_i}+1$. Putting these things together with $a_{\ell b_j}\le v_\ell$, which we have by construction, we get $a_{\ell p}\le v_\ell$, so that $f_p$ actually is among the generators of $I_{\le v}$.
\end{proof}

The assumption about the vector is important. One cannot simply kill some random generators. Consider for instance the following example: the ideal $(xy^2,xyz,xz^2)$ has linear quotients, but the ideal $(xy^2,xz^2)$ doesn't. And indeed we are not cropping by any vector to get the latter ideal.



\section{Linearization of  equigenerated monomial ideals}\label{lineargeneral}

Let $\K$ be a field of characteristic zero and let  $S:=\K[x_1,\dots,x_n]$ be the polynomial ring in $n$ variables over $\K$.

\begin{notation}\label{notazlin}
Let $I\se S$ be a monomial ideal  with minimal system of monomial generators $G(I)=\{f_1,\dots,f_m\}$ and let $I$ be \tbs{equigenerated}, namely such that all the generators have the same degree $d$. For each $i\in\{1,\dots,n\}$, let $M_i$ be the highest exponent with which $x_i$ occurs in $G(I)$.
\end{notation}

\begin{definition}\label{deflinearization}
The \tbs{linearization of $I$}, inside the polynomial ring $R:=\K[x_1,\dots,x_n,y_1,\dots,y_m]$, is the ideal 
\begin{align*}
\Lin(I)&:=\big(x_1^{a_1}\cdots x_n^{a_n}\mid a_1+\dots+a_n=d\,\text{ and $a_i\le M_i$ for all $i$}\big)\\
&\ \quad+\big(f_jy_j/x_k \mid \text{$x_k$ divides $f_j$},\ k=1,\dots,n,\ j=1,\dots,m\big).
\end{align*}
We refer to the two summands respectively as the \tbs{complete part} and the \tbs{last part} of $\Lin(I)$. 
\end{definition}

The name ``complete part'' comes from generalizing the Booth--Lueker construction, see Definition~\ref{defBL}. There, the analogous set of generators corresponds to a complete graph. Notice moreover that we can also write the complete part as ${(x_1,\dots,x_n)^d}_{\le(M_1,\dots,M_n,0,\dots,0)}$, see Notation~\ref{notationcropping}. The name ``last part'' is due to the lack of a better name.

\begin{remark}
Observe that the given generators of $I$ are minimal and $\Lin(I)$ is equigenerated in the same degree $d$ as $I$.
\end{remark}

\begin{remark}[algorithm to retrieve $I$ from $\Lin(I)$]\label{tornarindietrosingologrado}
For all $j\in\{1,\dots,m\}$, let $f_{j,1},f_{j,2},\dots,f_{j,r_j}$ be the monomials such that 
\begin{equation}\label{retreivinglcm}
f_{j,1}y_j,\ f_{j,2}y_j,\dots,\ f_{j,r_j}y_j
\end{equation}
are the minimal generators of $\Lin(I)$ divisible by the variable $y_j$. For each entry equal to $d$ in the vector $(M_1,\dots,M_n)$, we have a generator $f_j$ of $I$ which is the $d$-th power of the corrsponding $x$-variable. These generators $f_j$ are such that $r_j=1$. Otherwise, if $j$ is such that $r_j>1$, then we may retrieve the corresponding generator of $I$ as
$$f_j=\lcm(f_{j,1},f_{j,2},\dots,f_{j,i_j})\qquad\text{for all $j\in\{1,\dots,m\}$}.$$
\end{remark}

\begin{definition}\label{defstarlinearization}
The \tbs{$*$-linearization of $I$}, in the polynomial ring $R:=\K[x_1,\dots,x_n,y_1,\dots,y_m]$, is the ideal 
\begin{align*}
\LIN(I)&:=\big(x_1^{a_1}\cdots x_n^{a_n}\mid a_1+\dots+a_n=d\,\text{ and $a_i\le M$ for all $i$}\big)\\
&\ \quad+\big(f_jy_j/x_k \mid \text{$x_k$ divides $f_j$},\ k=1,\dots,n,\ j=1,\dots,m\big).
\end{align*}
We refer to the two summands respectively as the \tbs{complete part} and the \tbs{last part} of $\LIN(I)$. 
\end{definition}

\begin{remark}
The last parts of $\Lin(I)$ and $\LIN(I)$ are always equal, the only difference is in the complete part.
Notice that, since $M_i\le M$ for each $i\in\{1,\dots,n\}$, we always have 
$$\Lin(I)\se\LIN(I),$$
with equality only if $M_1=M_2=\dots=M_n=M$.   The reason for the introduction of $\LIN(I)$, which is ``coarser'' than $\Lin(I)$, is that given its symmetry it's easier to understand than $\Lin(I)$ in the non-squarefree case. In the squarefree case, treated in Section~\ref{linearsquarefree} the difference between $\Lin(I)$ and $\LIN(I)$ is not so significant, see Remark~\ref{differenzaLinLIN}. And moreover in that case $\LIN(I)$ is a more direct generalization of the Booth--Lueker construction in Definition~\ref{defBL}.
\end{remark}

%

\subsection{Properties of the linearization}

\emph{Lexicographic order.} Let $u=x_1^{a_1}\cdots x_n^{a_n}$ and $v=x_1^{b_1}\cdots x_n^{b_n}$ be two monomials in $\K[x_1,\dots,x_n]$, with $a=(a_1,\dots,a_n)$ and $b=(b_1,\dots,b_n)$ in~$\N^n$ their respective vectors of exponents. We recall that $u$ is larger than $v$ in lexicographic order, written $u>_{\Lex}v$, if the leftmost non-zero entry of $a-b$ is positive. We equivalently write $a>_{\Lex}b$ to mean the same thing.

The following is the main result of the paper.

\begin{theorem}\label{mainthm}
Assume that $f_1,\dots,f_m$ are in decreasing lexicographic order.
List the generators of the complete part of $\Lin(I)$, and respectively of $\LIN(I)$, in decreasing lexicographic order.  List the generators $\frac{f_j}{x_k}y_j$ of the last part  first by increasing\til$j$, and secondly by increasing~$k$.
The ideals $\Lin(I)$ and $\LIN(I)$ have linear quotients with respect to the given ordering of the generators.
\end{theorem}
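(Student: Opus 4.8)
The plan is to verify the combinatorial criterion of Lemma~\ref{amazinglemma}: for every pair of generators $u_j, u_i$ of $\Lin(I)$ (resp.\ $\LIN(I)$) with $j < i$ in the prescribed order, I must exhibit an index $k < i$ and a variable $x_\ell$ (here $x_\ell$ may be one of the $x$'s or one of the $y$'s) with $u_k/\gcd(u_k,u_i) = x_\ell$ and $x_\ell \mid u_j/\gcd(u_j,u_i)$. Since the generators split into the complete part and the last part, and the complete part is listed entirely before the last part, there are three cases to handle: (1) both $u_j$ and $u_i$ lie in the complete part; (2) $u_j$ in the complete part, $u_i = \tfrac{f_a}{x_k}y_a$ in the last part; (3) both $u_j = \tfrac{f_a}{x_k}y_a$ and $u_i = \tfrac{f_b}{x_\ell}y_b$ in the last part. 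First I would record the useful observation from Remark~\ref{tornarindietrosingologrado} / the excerpt that the complete part equals $(x_1,\dots,x_n)^d$ cropped by $(M_1,\dots,M_n,0,\dots)$ (resp.\ by $(M,\dots,M,0,\dots)$), so that by Proposition~\ref{cropparequozientilinear} applied to the power of the maximal ideal — which is well known to have linear quotients in lex order — Case~(1) is immediate. That disposes of the ``easy'' case and isolates the real content in Cases (2) and (3).

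For Case (2), with $u_i = \tfrac{f_a}{x_k}y_a$, note $\gcd(u_j, u_i)$ is a monomial in the $x$'s only (since $u_j$ has no $y$), so $u_j / \gcd(u_j,u_i)$ is divisible by some $x_\ell$ whenever $u_j \nmid u_i$, which holds because $\deg u_j = d > d-1 = \deg(f_a/x_k)$ in the $x$-variables. The task is to choose the witness $u_k$ from the complete part dividing $x_\ell \cdot \gcd(\text{stuff})$; concretely I would take $\ell$ to be a variable appearing in $u_j$ with higher exponent than in $f_a/x_k$ (chosen as the lex-largest such, to stay inside the complete part after cropping by the $M_i$'s), and set $u_k = x_\ell \cdot (f_a/x_k x_\ell')$ for an appropriate $\ell'$ so that $u_k$ has degree $d$, $u_k/\gcd(u_k,u_i) = x_\ell$, and $u_k \le_{\mathrm{Lex}} $ is still bounded by the $M$'s. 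The verification that $u_k$ genuinely lies in the complete part is where one uses that $f_a$ itself has exponents $\le M_i$, hence $f_a/x_k$ does too, and adding back one $x_\ell$ keeps us $\le M_\ell$ precisely because $x_\ell$ already divides $u_j \in$ complete part.

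Case (3) is the one I expect to be the main obstacle. Here both generators carry $y$-variables. If $a \ne b$ then $y_a \mid u_k/\gcd$ is forced and one wants a witness $u_k = \tfrac{f_a}{x_{k'}}y_a$ with $\tfrac{f_a}{x_{k'}y_a} / \gcd(\cdots, \tfrac{f_b}{x_\ell}y_b)$ equal to a single variable — this should follow by comparing $f_a$ and $f_b$ via the linear-quotients structure of $I$ itself (the hypothesis that $f_1,\dots,f_m$ are in decreasing lex order is exactly what lets me invoke Lemma~\ref{amazinglemma} for $I$, or rather for $(x_1,\dots,x_n)^d$ restricted, to find the reducing variable), together with a careful bookkeeping of which $x_{k'}$ to divide out. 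If $a = b$, then $u_j = \tfrac{f_a}{x_{k'}}y_a$ and $u_i = \tfrac{f_a}{x_k}y_a$ with $k' < k$ (by the secondary ordering); here $\gcd(u_j,u_i) = \tfrac{f_a}{x_{k'}x_k}\,y_a \cdot (\text{common }x\text{'s})$ and $u_j/\gcd = x_k$, $u_i/\gcd = x_{k'}$, so one needs an earlier generator reducing by $x_k$ — I would take a complete-part generator or an earlier last-part generator $\tfrac{f_a}{x_{k''}}y_a$ with $k'' < k$ chosen appropriately; the point is that $f_a/x_k$ and $f_a/x_{k'}$ differ by the single ratio $x_{k'}/x_k$, so the criterion is satisfied with $\ell = k$ and the witness being whichever generator divides $f_a/x_k$ times $x_k$. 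Throughout Case~(3) the delicate point is ensuring the chosen witness index is genuinely $< i$ in the total order — this forces me to track simultaneously the lex order on $x$-parts, the index $j$ on $y$'s, and the secondary index $k$, and to check that the ``obvious'' reducing generator was not listed after $u_i$. I would organize this by first handling $a=b$ (internal to one $y$-block), then $a < b$, then $a > b$, each time writing down the witness explicitly and verifying membership and the order constraint.
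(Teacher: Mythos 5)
Your overall plan matches the paper's: verify Lemma~\ref{amazinglemma} by splitting into the three pair-types (complete/complete, complete/last, last/last), and this is indeed how the proof proceeds. However, there is one structural simplification you miss and, more importantly, the case that you flag as ``the main obstacle'' is left genuinely unresolved, with one ingredient that is not actually available.

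On the organization: rather than verify cropping-membership of each witness case by case, the paper first proves that the \emph{uncropped} ideal $P+L$ has linear quotients, where $P=(x_1,\dots,x_n)^d$ is the full power and $L$ is the last part, and then applies Proposition~\ref{cropparequozientilinear} a single time with the vector $(M_1,\dots,M_n,1,\dots,1)$ (resp.\ $(M,\dots,M,1,\dots,1)$). Since every degree-$d$ monomial in the $x$'s lies in $G(P)$, the witnesses you must produce in Cases~(2) and~(3) are automatically generators, and the exponent bookkeeping you carry out (``is $u_k$ still below the $M$-vector?'') disappears entirely. Your local use of cropping inside Case~(1) is fine, but extending it to Cases~(2)--(3) means you must re-verify the bound for each witness, which your sketch only gestures at (and the formula $u_k = x_\ell\cdot(f_a/x_kx_{\ell'})$ does not have degree $d$ as written, suggesting the witness there is not pinned down).

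The real gap is Case~(3) with distinct $y$-indices. Your claim that ``$y_a \mid u_k/\gcd$ is forced'' is false: the witnesses the paper uses here are, in all but one sub-subcase, taken from the complete part, and the reducing variable is an $x$-variable, not $y_a$. The reducing variable is found purely from the decreasing lex ordering of $f_1,\dots,f_m$: writing $u_j=\tfrac{f_\mu}{x_\alpha}y_\mu$, $u_i=\tfrac{f_\lambda}{x_\beta}y_\lambda$ with $\mu<\lambda$, one takes $\ell$ to be the index of the leftmost nonzero entry of the exponent difference of $f_\mu$ and $f_\lambda$. Your appeal to ``the linear-quotients structure of $I$ itself'' is not available ($I$ is an arbitrary equigenerated monomial ideal, with no linear-quotients hypothesis); only the lex order is used. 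Moreover, after choosing $\ell$ the paper still has to split further on whether $\alpha=\beta$, and when $\alpha\ne\beta$ on whether $\ell=\alpha$, and the branch $\ell=\alpha$ requires three more sub-subcases (distinguished by whether $a_\alpha=b_\alpha+1$, whether $a_\alpha>b_\alpha+1$, and whether there is some $\gamma\notin\{\alpha,\beta\}$ with $a_\gamma>b_\gamma$), each with a different explicit witness. None of this is present in your sketch, and it is precisely the content of the proof: the delicate point is that when $\ell=\alpha$, the factor $1/x_\alpha$ in $u_j$ lowers the $x_\ell$-exponent and may destroy the divisibility $x_\ell\mid u_j/\gcd(u_j,u_i)$, which is why a modified or $y$-carrying witness is needed. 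I would suggest reworking the proof with $P+L$ in place of $\Lin(I)$, then writing out the sub-subcases of Case~(3) explicitly rather than appealing to a nonexistent linear-quotients property of $I$.
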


\begin{proof}
Denote $P:=(x_1,\dots,x_n)^d$ and let $L$ be the last part of $\Lin(I)$, which is the same as the last part of $\LIN(I)$.
It's enough to prove that $P+L$ has linear quotients, because then, in order to conclude that $\Lin(I)$ and $\LIN(I)$ have linear quotients, we apply Proposition~\ref{cropparequozientilinear} cropping from above by the vector $(M_1,\dots,M_n,1,\dots,1)$ and $(M,\dots,M,1,\dots,1)$, respectively. 

With the notation of Lemma~\ref{amazinglemma}, we have three cases: (1)~ we pick both $u_j$ and $u_i$ in $G(P)$, (2)~ we pick $u_j$ in $G(P)$ and $u_i$ in $G(L)$, or (3)~ we pick both $u_j$ and $u_i$ in $G(L)$. Of course in some special situations (like for instance if $I$ is principal) we cannot do this, but in those cases the claim is trivial.
\begin{enumerate}
\item[(1)] Let $u_j=x_1^{a_1}\cdots x_n^{a_n}$ and $u_i=x_1^{b_1}\cdots x_n^{b_n}$ be in $G(P)$, so that $\sum_\iota a_\iota=\sum_\iota b_\iota=d$, and $a>_\Lex b$. Let $\ell$ be the index of the leftmost non-zero entry of $a-b$, so that $a_\ell-b_\ell>0$. Notice that $\ell<n$, because otherwise we could not have $\sum_\iota a_\iota=\sum_\iota b_\iota$. Then there is some $q>\ell$ such that $b_q>a_q\ge0$, which means that $x_q$ divides $u_i$. 
If we pick $u_k:=\frac{x_\ell}{x_q}u_i$, then we get
$$\frac{u_k}{\gcd(u_k,u_i)}=x_\ell,$$
and  by construction of~$\ell$ also the second property in Lemma~\ref{amazinglemma} holds, namely $x_\ell$ divides $u_j/\gcd(u_j,u_i)$.
\item[(2)] Let $u_j=x_1^{a_1}\cdots x_n^{a_n}\in G(P)$ and $u_i=\frac{f_ty_t}{x_r}\in G(L)$, where $f_t=x_1^{b_1}\cdots x_n^{b_n}$ is some generator of $I$ and $x_r$ is a variable dividing $f_t$. We may write explicitly $u_i=x_1^{b_1}\cdots x_r^{b_r-1}\cdots x_n^{b_n}y_t$. Denote
$$X:=\Supp\Big(\frac{u_j}{\gcd(u_j,u_i)}\Big)=\Big\{x_p\mid a_p>
\begin{cases}
b_p&\text{for $p\ne r$}\\
b_r-1&\text{for $p=r$}
\end{cases}\Big\}.$$
We have $X\ne\emse$, because $\sum_\iota a_\iota=\sum_\iota b_\iota$. The index $r$ might be in~$X$ or not. We distinguish two cases:
\begin{itemize}
\item $X=\{r\}$: We have $a_r>b_r$, and we may pick $u_k=f_t$ to conclude.
\item $X\ne\{r\}$: Pick $\ell\in X\setminus\{r\}$ and
$$u_k:=x_\ell^{b_\ell+1}x_r^{b_r-1}\prod_{p\notin\{\ell,r\}}x_p^{b_p}=\frac{x_\ell}{x_r}f_t.$$
\end{itemize}
\item[(3)] Let both $u_j$ and $u_i$ be in $G(L)$. This means that we have
$$u_j=\frac{f_\mu}{x_\alpha}y_\mu,\quad u_i=\frac{f_\lambda}{x_\beta}y_\lambda,\quad\text{with $f_\mu,f_\lambda\in G(I)$ and $x_\alpha|f_\mu$, $x_\beta|f_\lambda$.}$$
Denote $f_\mu=x_1^{a_1}\cdots x_n^{a_n}$ and $f_\lam=x_1^{b_1}\cdots x_n^{b_n}$. We consider two different cases, the second of which has again some subcases.
\begin{itemize}
\item $\mu= \lambda$: Pick $u_k:=u_j$.
\item $\mu\ne \lambda$: We have $\mu<\lam$ by assumption, so that $f_\mu>_\Lex f_\lam$. Let $\ell$ be the index of the leftmost non-zero entry of $a-b$. There are again two possibilities.
\begin{itemize}
\item $\alpha=\beta$: Pick $u_k:=\frac{x_\ell}{x_\alpha}f_\lambda$, which works also in case $\ell=\alpha$.
\item $\alpha\ne\beta$: There are still two subcases.
\begin{itemize}
\item $\ell\ne\alpha$: Pick $u_k:=\frac{x_\ell}{x_\beta}f_\lam$, which works also in case $\ell=\beta(\ne\alpha)$.
\item $\ell=\alpha$: We have three final sub-subcases. In the first we have
$$\begin{cases}
a_\alpha=b_\alpha+1\\
a_\beta=b_\beta-1\\
a_p=b_p&\text{for $p\ne \alpha,\beta$},
\end{cases}$$
(so that $\alpha<\beta$). In this case pick $u_k:=\frac{f_\mu}{x_\alpha}y_\mu=\frac{f_\lam}{x_\beta}y_\mu$. In the case where $a_\alpha>b_\alpha+1$, pick $u_k:=\frac{x_\alpha}{x_\beta}f_\lam$. In the case where $a_\alpha=b_\alpha+1$ and $a_\gamma>b_\gamma$ for some $\gamma\notin\{\alpha,\beta\}$, pick $u_k:=\frac{x_\gamma}{x_\beta}f_\lam$.
\end{itemize}
\end{itemize}
\end{itemize}
\end{enumerate}
All possible cases are included, and the proof is then complete.
\end{proof}

As an immediate consequence, by Proposition~\ref{ifquotthenres}, we get the following.

\begin{corollary}\label{Linlinres}
The ideals $\Lin(I)$ and $\LIN(I)$ have $d$-linear resolution.
\end{corollary}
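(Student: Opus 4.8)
The plan is to apply Lemma~\ref{amazinglemma} directly to the ordered list of generators of $\Lin(I)$ (and $\LIN(I)$) described in the statement, but to do so efficiently by first reducing to the case of $P+L$, where $P=(x_1,\dots,x_n)^d$ is the full power and $L$ is the last part. This reduction is immediate from Proposition~\ref{cropparequozientilinear}: the complete part of $\Lin(I)$ is exactly $P$ cropped by the vector $(M_1,\dots,M_n,1,\dots,1)$ (and for $\LIN(I)$ by $(M,\dots,M,1,\dots,1)$), while the last part is left untouched by this cropping since each $f_jy_j/x_k$ has all $x$-exponents $\le M_i$ and all $y$-exponents $\le 1$. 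So once $P+L$ has linear quotients with respect to the prescribed order, the cropped ideals inherit linear quotients with respect to the induced order, which is precisely the order in the statement.

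Next I would verify the linear-quotients criterion of Lemma~\ref{amazinglemma} for $P+L$, splitting into the three cases according to whether the later generator $u_i$ lies in $G(P)$ or in $G(L)$, and similarly for the earlier generator $u_j$. Note that no $u_i\in G(P)$ can come after any $u_j\in G(L)$ would be the wrong way to think — rather, the complete part is listed first, so the relevant ordered pairs $(u_j,u_i)$ with $j<i$ fall into: (1) both in $G(P)$; (2) $u_j\in G(P)$, $u_i\in G(L)$; (3) both in $G(L)$. In case (1) one uses that $u_j>_{\Lex}u_i$: letting $\ell$ be the first index where the exponent vectors differ, $\ell<n$ forces some later variable $x_q$ to appear with strictly larger exponent in $u_i$, and $u_k:=\tfrac{x_\ell}{x_q}u_i\in G(P)$ works. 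In case (2), writing $u_i=f_ty_t/x_r$, one compares the $x$-part of $u_j$ against that of $u_i$ and, since the total $x$-degrees agree, finds an index $\ell$ with larger exponent in $u_j$; if $\ell=r$ is the only such index take $u_k=f_t$, otherwise take $u_k=\tfrac{x_\ell}{x_r}f_t$, both of which precede $u_i$ because $u_k\in G(P)$ or $u_k$ comes earlier in $G(L)$. Case (3), with $u_j=f_\mu y_\mu/x_\alpha$ and $u_i=f_\lambda y_\lambda/x_\beta$, is the most involved: if $\mu=\lambda$ one takes $u_k=u_j$; if $\mu<\lambda$ (so $f_\mu>_{\Lex}f_\lambda$) one uses the first index $\ell$ where the exponent vectors of $f_\mu$ and $f_\lambda$ differ, and subdivides according to whether $\alpha=\beta$, and when $\alpha\ne\beta$ whether $\ell=\alpha$, producing in each subcase a generator of the form $\tfrac{x_\ell}{x_?}f_\lambda$ or $\tfrac{f_\lambda}{x_\beta}y_\mu$ that lies earlier in the list.

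The main obstacle will be case (3), and specifically the bookkeeping in the sub-subcase $\ell=\alpha\ne\beta$, where one must produce a monomial $u_k$ that is both a valid witness for Lemma~\ref{amazinglemma} and genuinely earlier than $u_i$ in the prescribed order on $G(L)$ (earlier $j$, then earlier $k$). Here I would split according to whether $a_\alpha=b_\alpha+1$ with all other exponents equal (forcing a single compensating drop at $\beta$, so $u_k=f_\mu y_\mu/x_\alpha=f_\lambda y_\mu/x_\beta$ has index $\mu<\lambda$), or $a_\alpha>b_\alpha+1$ (so $\tfrac{x_\alpha}{x_\beta}f_\lambda$ is a generator of $L$ with the same $y$-index $\lambda$ but a smaller $x$-part, hence earlier), or $a_\alpha=b_\alpha+1$ but some other $a_\gamma>b_\gamma$ (use $\tfrac{x_\gamma}{x_\beta}f_\lambda$). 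Once all these subcases are exhausted and checked against both conditions of Lemma~\ref{amazinglemma} and against the ordering, the proof is complete; the remaining degenerate situations (e.g.\ $I$ principal, so $G(P)$ or parts of it are empty) are trivial since then linear quotients is automatic or the problematic pairs do not arise.
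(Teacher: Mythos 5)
Your route is the paper's route: show $\Lin(I)$ and $\LIN(I)$ have linear quotients by cropping from the ``ambient'' ideal $P+L=(x_1,\dots,x_n)^d+L$ via Proposition~\ref{cropparequozientilinear}, and then verify the Lemma~\ref{amazinglemma} criterion for $P+L$ in the three cases according to whether the two generators come from $G(P)$ or $G(L)$. Your case analysis, including the subdivision of case~(3) by $\mu=\lambda$ or not, $\alpha=\beta$ or not, $\ell=\alpha$ or not, and the three sub-subcases when $\ell=\alpha\ne\beta$, matches the paper's proof of Theorem~\ref{mainthm}.

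Two remarks. First, a small slip in case~(2): you say ``since the total $x$-degrees agree,'' but they do not --- the $x$-degree of $u_j\in G(P)$ is $d$, while that of $u_i=f_ty_t/x_r$ is $d-1$. The conclusion you need (that the set $X$ of indices where $u_j$'s exponent strictly exceeds $u_i$'s is nonempty) in fact follows precisely \emph{because} the $x$-degree of $u_j$ is strictly larger. Second, and this is the only genuine gap: the corollary asserts a $d$-linear \emph{resolution}, whereas your argument ends at linear \emph{quotients}. You still need to invoke Proposition~\ref{ifquotthenres} (an equigenerated ideal with linear quotients has a $d$-linear resolution) to conclude; this is exactly what the paper's one-line proof of the corollary does, the linear-quotients work having been done separately in Theorem~\ref{mainthm}. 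Once you add that citation, the argument is complete.
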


\subsubsection{Functorial properties}\label{sectionwithfunctor}

The ideals of a fixed ring $A$ are the objects of a category $\Ideals(A)$, with morphisms consisting of the inclusions. In particular, in this category two objects being isomorphic just means that they are equal as sets. For a polynomial ring~$S$ we have the subcategory $\MonIdeals(S)$, where the objects are monomial ideals and the morphisms are again the inclusions. 
Naively speaking, we would like
$$\Lin\:\MonIdeals(S)\la\MonIdeals(R)$$  to be a functor, which is just a fancy wat to say that if $I\se J$ (equivalently, $G(I)\se G(J)$) then $\Lin(I)\se \Lin(J)$. There is no problem for what concerns the complete part:  the vector of exponents can get larger, if we enlarge the set of generators, hence inclusions are preserved.

On the other hand the last part causes trouble: the last part, and in fact the ring $R$ itself, depends on the generators of $I$. The variable $y_j$ is associated to the generator $f_j$, and this does not behave well if in the inclusion $I\se J$ some generators are listed in a different way. So, we might change the notation in Definition\til\ref{deflinearization} by indexing the $y$-variables $y$  on the generators of $I$ and not on the position of these generators in the list. 
The next problem is then the amount of $y$-variables in the ring~$R$: we can either index them on all possible monomials in $x_1,\dots,x_n$ and have a functor~$\Lin$ that can take as input all ideals of $S$, or we can have a functor
$$\Lin_d\:\MonIdeals(S)_d\la\MonIdeals(R)_d,$$
for each degree $d$, as follows.
\begin{fundefinition}\label{funlin}
Referring to Notation~\ref{notazlin}, the \tbs{linearization of~$I$}, in the polynomial ring
$$R:=\K[x_1,\dots,x_n,y_u\mid\text{$u$ monomials in $x_1,\dots,x_n$ of degree $d$}],$$
 is the ideal 
\begin{align*}
\Lin_d(I)&:=\big(x_1^{a_1}\cdots x_n^{a_n}\mid a_1+\dots+a_n=d\,\text{ and $a_i\le M_i$ for all $i$}\big)\\
&\ \quad+\Big(\frac{f_j}{x_k}y_{f_j} \mid \text{$x_k$ divides $f_j$},\ k=1,\dots,n,\ j=1,\dots,m\Big).
\end{align*}
\end{fundefinition}
We stress again that this small change makes no real difference, it's just a small technicality.
When we  generalize the linearization construction to arbitrary monomial ideals in Section~\ref{linnonequig}, the $y$-variables of $R$ may be indexed on all monomials \emph{up to} degree $d$, so that we get  functors
$\Lin_{\le d}\:\MonIdeals(S)_{\le d}\to\MonIdeals(R)_{\le d}$.

\begin{remark}
To sum up, if we change the notation in the definition of linearization as suggested in the discussion above, we get that $\Lin(I)\se \Lin(J)$ if $I\se J$. Same thing goes for $\LIN(I)\se\LIN(J)$ if $I\se J$.
\end{remark}

\subsubsection{Interplay of linearization and standard operations}\label{linandops}

In the cateogory $\MonIdeals(S)$, starting from two ideals $I$ and $J$ one can construct for instance $I+J$, $IJ$ and $I\cap J$, and these are all still monomial ideals and well understood. Unfortunately, it seems that $\Lin$ is really well-behaved only with respect to taking the sum.

\begin{proposition}
If we use the Functorial Definition~\ref{funlin}, then the functors $\Lin$ and $\LIN$ are such that 
$$\Lin(I)+\Lin(J)\se\Lin(I+J)\quad\text{and}\quad \LIN(I)+\LIN(J)\se\LIN(I+J),$$
where $I$ and $J$ are equigenerated in the same degree.
\end{proposition}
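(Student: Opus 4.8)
The plan is to verify the inclusion summand by summand, relying on the functorial indexing of the $y$-variables from Functorial Definition~\ref{funlin}. First I would record the one structural fact that makes everything work: since $I$ and $J$ are equigenerated in the same degree $d$, no generator of degree $d$ can strictly divide another, so $G(I+J)=G(I)\cup G(J)$ as sets of monomials (after deleting repetitions). Consequently, if $M_i(I)$ denotes the largest exponent of $x_i$ occurring in $G(I)$, and similarly $M_i(J)$, $M_i(I+J)$, then $M_i(I+J)=\max\{M_i(I),M_i(J)\}$, so in particular $M_i(I)\le M_i(I+J)$ and $M_i(J)\le M_i(I+J)$ for every $i$; likewise $M(I):=\max_i M_i(I)\le\max_i M_i(I+J)=:M(I+J)$.

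Next I would handle the complete parts. A generator $x_1^{a_1}\cdots x_n^{a_n}$ of the complete part of $\Lin(I)$ has $a_1+\dots+a_n=d$ and $a_i\le M_i(I)\le M_i(I+J)$ for all $i$, hence it lies in the complete part of $\Lin(I+J)$; the same holds with $J$ in place of $I$. So the sum of the complete parts of $\Lin(I)$ and $\Lin(J)$ sits inside the complete part of $\Lin(I+J)$. For $\LIN$ the argument is word for word the same, using $M(I)\le M(I+J)$ in place of the individual inequalities. For the last parts, I would use that (with the functorial convention) the last part of $\Lin(I)$ is generated by the monomials $\tfrac{f_j}{x_k}y_{f_j}$ with $x_k\mid f_j$ and $f_j\in G(I)$, where the variable $y_{f_j}$ depends only on the monomial $f_j$, not on its position. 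Since $G(I)\se G(I+J)$, each such generator is literally among the generators of the last part of $\Lin(I+J)$ — same variable, same monomial — and likewise for $J$; recalling that the last parts of $\Lin$ and $\LIN$ coincide, the same holds for $\LIN$. Adding the inclusions for the complete and the last parts then gives $\Lin(I)+\Lin(J)\se\Lin(I+J)$ and $\LIN(I)+\LIN(J)\se\LIN(I+J)$.

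The argument is really just bookkeeping, so there is no serious obstacle; the only two points that deserve a moment's attention are the identity $G(I+J)=G(I)\cup G(J)$, which is false for arbitrary monomial ideals but holds here precisely because of the equigeneration hypothesis, and the fact that the functorial indexing of the $y$-variables is exactly what turns the inclusion of last parts into a tautology rather than something that holds only ``up to relabelling the variables''. I would state these two remarks explicitly before launching into the three-way check above.
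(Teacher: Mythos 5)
Your proof is correct and follows essentially the same route as the paper's: decompose each linearization into complete and last parts, observe that $M_i(I+J)=\max\{M_i(I),M_i(J)\}$ to compare the complete parts, and use the functorial indexing of the $y$-variables to compare the last parts. Your explicit remark that equigeneration forces $G(I+J)=G(I)\cup G(J)$ is a point the paper uses implicitly, and spelling it out is a welcome bit of extra care.
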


\begin{proof}
We can decompose $\Lin(I)=C_I+L_I$ and similarly $\Lin(J)=C_J+L_J$ in complete and last part. Denote by $v_I$ the vector of highest exponents of $I$ and similarly $v_J$ and $v_{I+J}$ for $J$ and $I+J$. We have $M_i(I+J)=\max\{M_i(I),M_i(J)\}$, so that $v_I\le v_{I+J}\ge v_J$ and this means that $C_I+C_J\se C_{I+J}$. As for the last part, we have $L_I+L_J=L_{I+J}$. So this settles the first inclusion in the statement. For what concerns the second inclusion, the only difference is in the complete part of the $*$-linearization, which is ``coarser''. In this case we simply have $M_{I+J}=\max\{M_I,M_J\}$, and this is enough.
\end{proof}

With the notation as above, the equality $ \LIN(I)+\LIN(J)=\LIN(I+J)$ holds if and only if $M_I=M_J$, and
the equality $\Lin(I)+\Lin(J)=\Lin(I+J)$ holds if and only if $v_I=v_J$.

\begin{remark}[linearization and products]\label{productnogood}
The behaviour of $\Lin$ (and similarly, $\LIN$) with respect to taking the product is a bit more complicated.  If $I$ and $J$ are equigenerated respectively in degrees $d_I$ and $d_J$, then both  $\Lin(I)\Lin(J)$ and $\Lin(IJ)$ are equigenerated in the same degree $d_Id_J$. Notice that there cannot be an inclusion $\Lin(I)\Lin(J)\se\Lin(IJ)$: each generator in the last part of $\Lin(IJ)$ contains only one $y$-variable with exponent~$1$, but some of the minimal generators are quadratic in the $y$-variables.
One might then ask if there is any hope of having
$$\Lin(IJ)\subset\Lin(I)\Lin(J).$$
Unfortunately the ``improved'' version  of linearization in Functorial Definition~\ref{funlin} cannot possibly work: the ring in which $\Lin(IJ)$ is defined has $y$-variables indexed on monomials of degree $d_Id_J$, whereas the one in which $\Lin(I)\Lin(J)$ lives has $y$-variables indexed on both monomials of degree $d_I$ and $d_J$.
 The problem is not in the complete part, but rather in the last part of $\Lin(IJ)$. If $G(I)=\{f_1,\dots,f_m\}$ and $G(J)=\{g_1,\dots,g_m\}$, in $\Lin(IJ)$, then in the last part of $\Lin(IJ)$ we have generators
$$\frac{f_ig_j}{x_k}y_\ell$$
where $x_k$ divides $f_ig_j$. Hence, $x_k$ divides at least one of $f_i$ or $g_j$, say $f_i$. So then we want to have an indexing of the $y$-variables such that $\frac{f_i}{x_k}y_\ell$ actually is in the last part of $\Lin(I)$. If we have that, then of course $g_j$ is in the complete part of $\Lin(J)$ and then we have $\frac{f_ig_j}{x_k}$ in $\Lin(I)\Lin(J)$. There might be a way to overcome the problem, if we one comes up with a different way of indexing the variables. Or perhaps  relaxing the notion of morphism in the category, making it  ``looser'' than set-theoretic inclusion, and allowing some change of variables.
\end{remark}

\begin{remark}[intersections]
With the intersection we have similar problems as in the case of products. It's not clear how to define the linearization in a sensible way that allows to compare for instance $\Lin(I\cap J)$ and $\Lin(I)\cap\Lin(J)$.
Besides, there is an additional problem: notice that even if $I$ and $J$ are equigenerated in the same degree, $I\cap J$ might not be. Consider for instance 
$$I=(x_1^2x_2,x_2x_3x_4)\quad\text{and}\quad J=(x_1x_2)$$
in $\K[x_1,x_2,x_3]$. Then $I\cap J=(x_1^2x_2,x_1x_2x_3x_4)$ is not equigenerated. This problem might be overcome by applying $\Lin$ or $\LIN$ to the \emph{equification} of $I\cap J$, introduced in Definition~\ref{defequification}. This in fact is the way in which we define the linearization of an arbitrary, not necessarily equigenerated monomial ideal, in Definition~\ref{linpertutti}. In this example we have
$$(I\cap J)^\eq=(x_1^2x_2z,x_1x_2x_3x_4)\quad \subset\quad\K[x_1,x_2,x_3,z],$$
which looks promising,
but one still needs to introduce an appropriate way to index the $y$-variables.
\end{remark}


\subsubsection{Polymatroidal ideals and linearization}

The study of matroids and polymatroids is a core area in combinatorics and related fields. Polymatroidal ideals, which can be defined even without referring explicitly to any polymatroid, constitute an interesting large class of equigenerated ideals which are particularly well-behaved with respect to resolutions. For instance, all powers of a polymatroidal ideal have a linear resolution (see Section~5 of~\cite{CoHe} and Section~5 of~\cite{BrCo}). In particular, Herzog and Takayama proved in~\cite{HeTa}  that polymatroidal ideals have linear quotients. So it's natural to ask when $\Lin(I)$ is polymatroidal. It turns out that in most cases $\Lin(I)$ is \emph{not} polymatroidal. For additional information on polymatroidal ideals, see Section~12.6 of~\cite{HH}.

In the following, for a monomial $u=z_1^{a_1}\cdots z_s^{a_s}$ in the variables $z_1,\dots,z_s$, we write $\deg_{z_i}(u):=a_i$.

\begin{definition}
An equigenerated monomial ideal $J\se\K[z_1,\dots,z_s]$  is \tbs{polymatroidal} the following holds: for any $u$ and $v$ in $G(J)$ and for any\til$i$ such that $\deg_{z_i}(u)>\deg_{z_i}(v)$, there exists~$j$ such that $\deg_{z_j}(u)<\deg_{z_j}(v)$ and $\frac u{z_i}z_j\in G(J)$.
\end{definition}

\begin{theorem}\label{carattpolymatr}
For a monomial ideal $I\subset S=\K[x_1,\dots,x_n]$ equigenerated in degree~$d$, in the following cases $\Lin(I)$ is polymatroidal:
\begin{itemize}
\item $d=1$, that is, $I$ is generated by some variables; 
\item $d$ is arbitrary and $I$ is principal.
\end{itemize}
In all other cases $\Lin(I)$ is not polymatroidal.
\end{theorem}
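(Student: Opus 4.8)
The plan is to treat the two "positive" cases first by exhibiting the polymatroidal property directly, and then to handle the "negative" case by producing a single obstruction: a pair of generators of $\Lin(I)$ and an exchange index for which no valid exchange exists inside $G(\Lin(I))$.

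First I would dispose of case $d=1$. Here $I=(x_{i_1},\dots,x_{i_m})$, the vector $(M_1,\dots,M_n)$ is just the indicator vector of $\{i_1,\dots,i_m\}$, the complete part of $\Lin(I)$ is $(x_{i_1},\dots,x_{i_m})$ again, and the last part consists of the monomials $\frac{x_{i_j}}{x_{i_j}}y_j=y_j$. So $\Lin(I)=(x_{i_1},\dots,x_{i_m},y_1,\dots,y_m)$ is generated by variables, hence is (trivially) polymatroidal — any ideal generated in degree $1$ by distinct variables satisfies the exchange property vacuously, since no generator $u$ has $\deg_{z_i}(u)>\deg_{z_i}(v)$ unless $u=z_i$ and $\deg_{z_i}(v)=0$, and then $\frac{u}{z_i}z_j=z_j\in G(\Lin(I))$ for every other variable $z_j$ appearing. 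For case (b), $I=(f)$ is principal, $f=x_1^{c_1}\cdots x_n^{c_n}$ of degree $d$; then $M_i=c_i$, the complete part is the single monomial $f$ (the only degree-$d$ monomial with $a_i\le c_i$ for all $i$), and the last part is $\{\frac{f}{x_k}y_1\mid x_k\mid f\}$. So $\Lin(I)=(f,\ \frac{f}{x_k}y_1:x_k\mid f)$, and one checks the polymatroidal exchange property by a short direct computation: comparing $f$ with $\frac{f}{x_k}y_1$, the only index where $f$ has a strictly larger exponent is $x_k$ (exponent $c_k$ versus $c_k-1$), and $\frac{f}{x_k}y_1\in G(\Lin(I))$; comparing two generators $\frac{f}{x_k}y_1$ and $\frac{f}{x_\ell}y_1$ of the last part, the index where the first is larger is $x_\ell$, and $\frac{f}{x_k}y_1\cdot\frac{x_\ell}{x_k}$... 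I would just enumerate the handful of cases. I expect this to be routine.

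The heart of the matter is the "otherwise not polymatroidal" claim, where $d\ge 2$ and $m\ge 2$. The key structural observation I would use is that $\Lin(I)$ always contains two last-part generators coming from two distinct generators $f_a,f_b$ of $I$ that share a common variable — more precisely, since $I$ is not principal and each $f_j$ has degree $d\ge 1$, one can find generators $f_a\ne f_b$ and a variable $x_k$ dividing $f_a$, together with the fact that the complete part of $\Lin(I)$ is all of $(x_1,\dots,x_n)^d$ cropped by $(M_1,\dots,M_n)$. The obstruction: take $u=\frac{f_a}{x_k}y_a$ from the last part and $v$ equal to a suitable complete-part monomial or a last-part generator $\frac{f_b}{x_\ell}y_b$ with $b\ne a$; then $\deg_{y_a}(u)=1>0=\deg_{y_a}(v)$, so the polymatroidal property would force some $j$ with $\deg_{x_j}(u)<\deg_{x_j}(v)$ or $\deg_{y_j}(u)<\deg_{y_j}(v)$ and $\frac{u}{y_a}z_j\in G(\Lin(I))$. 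But $\frac{u}{y_a}z_j=\frac{f_a}{x_k}z_j$, and for this to be a minimal generator of $\Lin(I)$: if $z_j$ is an $x$-variable this monomial must lie in the complete part (so $\frac{f_a}{x_k}x_j$ must have all exponents $\le M_i$), and if $z_j=y_b$ for some $b\ne a$ then $\frac{f_a}{x_k}y_b$ must be a last-part generator, i.e. $\frac{f_a}{x_k}=\frac{f_b}{x_\ell}$ for some $\ell$. I would choose $v$ and $f_a,x_k$ so as to block all of these simultaneously — for instance by picking $f_a$ with $\deg_{x_k}(f_a)=M_k$ maximal, which forces $\frac{f_a}{x_k}x_j\notin$ complete part whenever $j$ is an index where $v$ is larger, and by exploiting minimality of the generating set $G(I)$ to prevent $\frac{f_a}{x_k}=\frac{f_b}{x_\ell}$.

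The main obstacle I anticipate is the bookkeeping in the last step: the choice of the "witness pair" $(u,v)$ must be made uniformly across all ideals $I$ with $d\ge 2$, $m\ge 2$, and one has to be careful that the needed generator $f_a$ (with a maximal exponent $M_k$ that is $\ge 1$) and the needed comparison partner $v$ genuinely exist and genuinely block every candidate exchange — including exchanges into the complete part, which is the most delicate since the complete part is "large". I would organize this by splitting on whether some $M_i\ge 2$ (non-squarefree case, where $f_a=x_i^{M_i}\cdot(\text{rest})$ gives a clean obstruction via the $x_i$-exponent) versus the squarefree case $M_1=\dots=M_n=1$ with $d\ge2$, $m\ge2$, where the obstruction instead comes from two distinct squarefree generators and the incompatibility of their "$(d-1)$-edges". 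In each branch the verification that no exchange exists reduces to a direct exponent comparison, which I would present compactly rather than case-by-case.
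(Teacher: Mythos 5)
Your treatment of the two positive cases ($d=1$ and $I$ principal) is correct and essentially the paper's, so I focus on the negative case, where there is a genuine gap.

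Your main obstruction pivots on $y_a$: you take $u=\frac{f_a}{x_k}y_a$, some $v$ not involving $y_a$, note $\deg_{y_a}(u)>\deg_{y_a}(v)$, and try to block all monomials $\frac{u}{y_a}z_j=\frac{f_a}{x_k}z_j$ from $G(\Lin(I))$. The trouble is that when $z_j$ is an $x$-variable, $\frac{f_a}{x_k}x_j$ is a degree-$d$ monomial purely in the $x$-variables and so lands in the complete part whenever $\deg_{x_j}(f_a)<M_j$ — which you cannot prevent in general. Your remedy, ``pick $f_a$ with $\deg_{x_k}(f_a)=M_k$, which forces $\frac{f_a}{x_k}x_j\notin$ complete part,'' is simply false: membership of $\frac{f_a}{x_k}x_j$ in the complete part depends only on whether $\deg_{x_j}(f_a)<M_j$, and the exponent at position $k$ (which only decreases) is irrelevant. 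A concrete failure of the whole $y_a$-pivot strategy: take $I=(x_1x_2,\,x_3x_4)$, so $d=2$, $m=2$, $M=(1,1,1,1)$. For $u=x_2y_1$ (i.e.\ $f_a=x_1x_2$, $x_k=x_1$) and \emph{any} choice of $v\in G(\Lin(I))$ with $v\ne u$, there is some $j\ne 2$ with $\deg_{x_j}(v)>0=\deg_{x_j}(u)$, and then $\frac{u}{y_1}x_j=x_2x_j$ is a squarefree quadratic monomial lying in the complete part. So a valid exchange always exists and the $y_a$-pivot produces no obstruction at all; yet the theorem asserts $\Lin(I)$ is not polymatroidal here.

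The fix, which is what the paper does, is to pivot on an \emph{$x$-variable} and to take both $u$ and $v$ from the last part, say $u=\frac{f}{x_k}y_f$ and $v=\frac{g}{x_q}y_g$ with $f\ne g$. Then every candidate exchange $\frac{u}{x_i}z_j$ still carries the factor $y_f$, so it can never lie in the (purely $x$-variable) complete part; the only remaining threats are the last-part generators of the form $\frac{f}{x_\ell}y_f$ or monomials with two distinct $y$-variables (which never occur), and these are ruled out by a direct exponent comparison using the choice of $k$ (e.g.\ $\deg_{x_k}(f)\ge\deg_{x_k}(g)+2$, or the finer subcases when no such $k$ exists). Structurally: your pivot \emph{removes} the $y$-variable and lets the exchange escape into the large complete part, whereas the paper's pivot \emph{keeps} the $y$-variable in play precisely to prevent that escape.
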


\begin{proof}
Assume that $I$ is generated by some variables of $S$.  Then $\Lin(I)$ is also generated by variables, and therefore trivially polymatroidal. Assume now that $d$ is arbitrary and $I=(f)$ is principal. Denoting simply by~$y$ the only $y$-variable, we have
$$\Lin(I)=(f)+\Big(\frac f{x_k}y\mid x_k\in\Supp(f)\Big).$$
It's immediate to see that, picking any two generators, the condition in the definition of polymatroidal ideal is satisfied.

Next we prove that in all the other cases, namely if $d\ge2$ and $I$ is not principal, $\Lin(I)$ is not polymatroidal. Let 
$$f=x_1^{c_1}\cdots x_n^{c_n}\quad\text{and}\quad g=x_1^{e_1}\cdots x_n^{e_n}$$
be two distinct minimal generators of $I$, so that $\sum_\iota d_\iota=\sum_\iota e_\iota=d$. In what follows we use the notation introduced in the Functorial Definition~\ref{funlin}. We distinguish two cases, the second of which has several subcases:
\begin{itemize}
\item[(1)] If there exists  an index~$k$ such that $c_k\ge e_k+2$, then we pick $u:=\frac f{x_k}y_f$ and $v:=\frac g{x_q}y_g$ for some $q\ne k$. We  select $x_k$ to be the variable $z_i$ in the definition of polymatroidal, and observe that $\deg_{x_k}(u)>\deg_{x_k}(v)$. But there exists no variable~$x_j$ with $\deg_{x_j}(u)<\deg_{x_j}(v)$ and  such that the monomial
$$\frac u{x_k}x_j=\frac f{x_k^2}x_jy_f$$
is in $G(\Lin(I))$. Indeed, the only way this monomial could be in $G(\Lin(I))$ would be if $\frac f{x_k^2}x_j=\frac f{x_\ell}$ for some $\ell$, but this is possible only if $x_j=x_k$, and this cannot happen because of the above assumptions on $x_k$ and $x_j$. Therefore  in this case $\Lin(I)$ is not polymatroidal.
\item[(2)] So now suppose that there is no  $k$ such that $c_k\ge e_k+2$. And swapping~$u$ and~$v$ we may actually assume that
$$c_i\le e_i+1\quad\text{and}\quad e_i\le c_i+1$$
for all $i$. There must be at least one index $k$ such that $c_k=e_k+1$. We distinguish again two subcases:
\begin{itemize}
\item If $e_k=1$, then $c_k=2$. If we pick
$$u:=\frac f{x_k}y_f\quad\text{and}\quad v:=\frac g{x_k}y_g,$$ 
then $1=\deg_{x_k}(u)>\deg_{x_k}(v)=0$, and of course there is no~$x_j$ with $\deg_{x_j}(v)>\deg_{x_j}(u)$ and such that $\frac u{x_k}x_j=\frac f{x_k^2}x_jy_f\in G(\Lin(I))$.
\item Suppose that there is no $k$ such that $e_k=1$ and $c_k=2$, and also no $k$ such that $e_k=2$ and $c_k=1$. 
Since $f\ne g$, we know that that there are~$k$ and $p$ such that $c_k=e_k+1=1$ and $c_p=e_p-1=0$. We divide in two cases one last time:
\begin{itemize}
\item If there is $\ell\ne k$ such that $\deg_{x_\ell}(f)>\deg_{x_\ell}(g)$, meaning that~$c_\ell=1$ and~$e_\ell=0$, then 
we can pick
$$u:=\frac f{x_k}y_f\quad\text{and}\quad v:=\frac g{x_p}y_g$$
and then consider $u/x_\ell$. There is no $x_j$ such that $\deg_{x_j}(u)<\deg_{x_j}(v)$ and $\frac u{x_\ell}x_j=\frac f{x_kx_\ell}x_j\in G(\Lin(I))$.
\item Assume then that $c_i=e_i$ for all $i\notin\{k,p\}$. Then, since~$d\ge2$, there exists some $\ell\notin\{k,p\}$ such that $\deg_{x_\ell}(f)=\deg_{x_\ell}(g)>0$, so we can choose
$$u:=\frac f{x_\ell}y_f\quad\text{and}\quad v:=\frac g{x_\ell}y_g,$$
and consider $u/x_k$.
\end{itemize}
\end{itemize}
\end{itemize}
In all the possible cases we produced two generators that show how the condition in the definition of polymatroidal ideal is not satisfied.
\end{proof}


\subsection{Radical of $\Lin(I)$ and $\LIN(I)$}

Recall that, given an ideal $I\se S=\K[x_1,\dots,x_n]$, the \tbs{radical of $I$} is defined as
$$\sqrt I:=\{f\in S\mid f^p\in I\text{ for some $p\in\N$}\}.$$
In case $I$ is a monomial ideal, $\sqrt I$ is also a monomial ideal and one has an easy way to compute generators for $\sqrt I$, described as follows. We use the same notation as in~\cite{HH} and, for a monomial $u=x_1^{a_1}x_2^{a_2}\cdots x_n^{a_n}$, we denote
$$\sqrt u:=\prod_{\substack{i=1,\dots,n\\a_i>0}}x_i.$$
Then, if $I$ is a monomial ideal generated by monomials $u_1,\dots,u_s$, the radical of $I$ is simply $\sqrt I=(\sqrt{u_i}\mid i=1,\dots,s)$. See for instance~\cite{HH}, Proposition~1.2.4. We say that an ideal $I$ is a \tbs{radical ideal} if $I=\sqrt I$.

A first question that might arise is: when is $\Lin(I)$ (or $\LIN(I)$) a radical ideal? For a monomial ideal, being radical is equivalent to being squarefree, that is, having all minimal monomial generators which are squarefree. Moreover, as explained in Remark~\ref{remarksqlin}, $\Lin(I)$ is squarefree, or equivalently $\LIN(I)$, if and only if $I$ is squarefree. (For more details on this situation see Section~\ref{linearsquarefree}.) For this reason, we assume $M\ge2$ in this subsection.

\begin{remark}
It is trivial to notice that applying $\Lin$ or $\LIN$ to the radical $\sqrt I$ of some equigenerated ideal $I$ might not make sense, because $\sqrt I$ might not be equigenerated. Consider for instance  $I=(x_1^2,x_2x_3)$, which is such that $\sqrt I=(x_1,x_2x_3)$. (A possibility could be applying the radical to the equification $I^\eq$, see Definitiln~\ref{defequification}, but we did not manage to get anything fruitful from that approach.) Our attention will then be focused on applying to $I$ first the linearization and then the radical.
It turns out that $\sqrt{\LIN(I)}$ is easier to understand and seems to have nicer properties in general than $\sqrt{\Lin(I)}$. The following example illustrates this.
\end{remark}

\begin{example}
Consider $I=(x_1^2x_2,x_1x_2x_3)\subset \K[x_1,x_2,x_3]$. Then one has
$$\Lin(I)=(x_1^2x_2,\ x_1^2x_3,\ x_1x_2x_3,\ x_1x_2y_1,\ x_1^2y_1,\ x_2x_3y_2,\ x_1x_3y_2,\ x_1x_2y_2)$$
and we get
$$\sqrt{\Lin(I)}=(x_1x_2,\ x_1x_3,\ x_1y_1,\ x_2x_3y_2),$$
which is somewhat  difficult to describe and to control. It is not equigenerated and thus cannot have linear resolution, for instance. This is due to the lack of symmetry in the complete part of $\Lin(I)$. On the other hand one has
$$\LIN(I)=(x_1,x_2,x_3)^3_{\le(2,2,2,0,0)}+(x_1x_2y_1,\ x_1^2y_1,\ x_2x_3y_2,\ x_1x_3y_2,\ x_1x_2y_2),$$
which has a very symmetric complete part, whose nice properties are inherited by
$$\sqrt{\LIN(I)}=(x_1x_2,\ x_1x_3,\ x_2x_3,\ x_1y_1).$$
 Computer calculations show that $\sqrt{\LIN(I)}$ in this example has linear quotients, and hence linear resolution.
Notice moreover that $\sqrt{\LIN(I)}$ \emph{looks like} the linearization of something but ``lacks a piece'', for instance $x_2y_1$ or $x_3y_1$.
\end{example}

\begin{example}
Take now $I=(x_1^2x_2^2,x_2^2x_3^2)\subset \K[x_1,x_2,x_3]$. The ideal
$$\sqrt{\LIN(I)}=(x_1x_2,\ x_1x_3,\ x_2x_3)$$
has again linear quotients, and this time it looks even prettier, with no generators involving $y$-variables. More precisely, one can compute the radical ideal of $\LIN(I)$ by taking the radicals of the generators, and one realizes that the radicals of the generators of the last part of $\LIN(I)$---which are the ones involving the $y$-variables---turn out to be redundant. Notice that in this case $\LIN(I)=\Lin(I)$.
\end{example}

\begin{notation}
As in the rest of the section, let $d$ be the degree of all the minimal monomial generators of $I$ and let $M$ be the largest exponent occurring in these generators. Write
$$d=aM+b,\qquad\text{with $a,b\in\N$ and $b<M$,}$$
in the ``Euclidean way''. As motivated in the discussions above, we assume that
\begin{equation}\label{assumptioneuclid}
a\ge1\qquad\text{and}\qquad M\ge2.
\end{equation}
Moreover, for a non-negative integer $b\in\N$, we write
$$\sign(b)=
\begin{cases}
1&\text{if $b>0$,}\\
0&\text{if $b=0$.}
\end{cases}$$
\end{notation}

In the following, recall  that the support $\Supp(u)$ of a monomial~$u$ is the set consisting of the variables which divide~$u$.

\begin{proposition}
The ideal $\sqrt{\LIN(I)}$ is generated by all squarefree monomials of degree $a+\sign(b)$ in the variables $x_1,\dots,x_n$ and by the monomials $\sqrt{\frac{f_j}{x_k}}y_j$, where $f_j$ is a minimal generator of $I$, $x_k$ occurs with exponent~$1$ in $f_j$, and all the other variables occur with exponent~$M$. Notice that we might possibly have such ``pathological'' generators, that is, generators with exactly one variable that occurs with exponent $1$ and all other variables with exponent~$M$, only when $b=1$. Let $p$ be the number of ``pathological'' generators of~$I$. Then
$$\#G\big(\sqrt{\LIN(I)}\big)=\binom n{a+\sign(b)}+p,$$
and all the generators have the same degree $a+\sign(b)$.
\end{proposition}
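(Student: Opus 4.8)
The plan is to compute $\sqrt{\LIN(I)}$ directly by taking the radical of each monomial generator of $\LIN(I)$ and then discarding the redundant ones. Recall from the earlier discussion that if $\LIN(I)$ is generated by monomials $u_1,\dots,u_s$, then $\sqrt{\LIN(I)} = (\sqrt{u_1},\dots,\sqrt{u_s})$, with possibly non-minimal generators on the right. So I would split into the two parts of $\LIN(I)$.

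First I would handle the complete part. Its generators are the monomials $x_1^{a_1}\cdots x_n^{a_n}$ with $\sum a_i = d$ and $a_i \le M$ for all $i$. Taking radicals, $\sqrt{x_1^{a_1}\cdots x_n^{a_n}} = \prod_{a_i>0} x_i$, which is a squarefree monomial whose degree equals the number of nonzero exponents, i.e.\ the size of the support. The key combinatorial observation is: given the constraint $a_i\le M$, the minimum possible support size of a monomial of degree $d$ is exactly $\lceil d/M\rceil = a + \sign(b)$ (pack as much as possible into each variable, using $M$ in $a$ variables and $b$ in one more if $b>0$), and every support size from $a+\sign(b)$ up to $\min(n,d)$ is actually achieved. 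Hence the radicals coming from the complete part are exactly all squarefree monomials of degree $\ge a+\sign(b)$; among these, the minimal ones under divisibility are precisely the squarefree monomials of degree $a+\sign(b)$, and there are $\binom{n}{a+\sign(b)}$ of them. (Here I use assumption~\eqref{assumptioneuclid}, $a\ge1$ and $M\ge2$, so that $a+\sign(b)\le n$ is consistent; one should note $n\ge a+\sign(b)$ holds because $I\subseteq S$ actually contains generators of degree $d$ with exponents bounded by $M$, forcing $n\ge\lceil d/M\rceil$.)

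Next I would handle the last part, whose generators are $\frac{f_j}{x_k}y_j$ for $x_k\mid f_j$. Write $f_j = \prod_i x_i^{a_i}$. Then $\frac{f_j}{x_k}y_j = y_j\cdot x_k^{a_k-1}\prod_{i\ne k}x_i^{a_i}$, and its radical is $y_j\cdot \sqrt{\frac{f_j}{x_k}} = y_j\cdot(\text{squarefree part of }\frac{f_j}{x_k})$. The support of $\sqrt{\frac{f_j}{x_k}}$ is $\Supp(f_j)$ if $a_k\ge2$, and $\Supp(f_j)\setminus\{x_k\}$ if $a_k=1$; so the degree of the radical is $1 + \#\Supp(f_j)$ or $1 + \#\Supp(f_j) - 1 = \#\Supp(f_j)$ accordingly. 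Now the point is to decide which of these survive minimality against the complete-part generators (all squarefree monomials of degree $a+\sign(b)$, in the $x$-variables only). A generator $y_j\cdot w$ with $w$ squarefree in the $x$'s is redundant iff it is divisible by some $x$-only squarefree monomial of degree $a+\sign(b)$, which since $y_j\nmid$ any complete-part generator happens iff $\deg(w)\ge a+\sign(b)$. So $y_j\sqrt{\frac{f_j}{x_k}}$ survives iff $\deg\!\big(\sqrt{\tfrac{f_j}{x_k}}\big)< a+\sign(b)$, i.e.\ iff $\#\Supp(f_j)$ (minus $\sign$ of "$a_k=1$") is $\le a+\sign(b)-1$. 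I would then show this forces $f_j$ to have exactly one variable with exponent $1$ (namely $x_k$) and all other variables with exponent $M$: indeed if $\#\Supp(f_j)=t$ and the exponents are $\le M$ with one of them being $a_k=1$, then $d = \sum a_i \le (t-1)M + 1$, so $t \ge (d-1)/M + 1 = a + (b-1)/M + 1$; combined with the surviving condition $t\le a+\sign(b)$ (when $a_k=1$ the radical degree is $t$, so we need $t\le a+\sign(b)-1$... let me be careful) one pins down $b=1$, $t = a+1$, and all nonzero exponents other than $a_k$ equal to $M$. Among surviving generators one also checks no two are comparable and none is divisible by another (they all have distinct $y_j$ or distinct support), so they are all minimal; call their number $p$, the number of such ``pathological'' generators of $I$ together with a choice of such $x_k$ — and note that for a pathological $f_j$ the variable $x_k$ with exponent $1$ is unique, so $p$ counts pathological generators. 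Hence $\#G(\sqrt{\LIN(I)}) = \binom{n}{a+\sign(b)} + p$ and every generator has degree $a+\sign(b)$.

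The main obstacle is the bookkeeping in the last-part analysis: carefully tracking the two cases $a_k=1$ versus $a_k\ge2$ for the dropped variable, correctly comparing radical degrees to $a+\sign(b)$, and deducing that survival forces the rigid ``pathological'' shape ($b=1$, one exponent-$1$ variable, all others exponent~$M$). I would organize this as a short lemma isolating the claim ``$\deg\sqrt{\frac{f_j}{x_k}} < a+\sign(b)$ iff $f_j$ is pathological with distinguished variable $x_k$,'' and then the count follows immediately. The complete-part side is essentially the elementary fact that the minimum support of a degree-$d$ monomial with exponents capped at $M$ is $\lceil d/M\rceil$, which I would state and prove in one line.
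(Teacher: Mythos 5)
Your proposal is correct and takes essentially the same route as the paper's proof: take radicals generator-by-generator, observe that the complete part contributes exactly the squarefree monomials of degree $a+\sign(b)$ (the minimum support size $\lceil d/M\rceil$), then classify which last-part radicals $y_j\sqrt{f_j/x_k}$ survive by comparing $\#\Supp(f_j/x_k)$ to $a+\sign(b)$, splitting into the cases $a_k\ge2$ versus $a_k=1$. The only stylistic difference is in the final step pinning down the "pathological" shape: the paper uses a "moving exponents" contradiction argument (shift weight from $x_k$ onto another variable with exponent $<M$ to shrink the support below the minimum), whereas you use the direct bound $d\le (t-1)M+1$, which is an equivalent and arguably cleaner computation. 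You should make sure to state explicitly that the case $a_k\ge2$ yields no survivors (it requires $\#\Supp(f_j)\le a+\sign(b)-1<\lceil d/M\rceil$, a contradiction); you lean on this implicitly when you assert the pathological shape forces $a_k=1$.
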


\begin{proof}
When we take the radical of the complete part of $\LIN(I)$, we find all squarefree monomials of degree $a+\sign(b)$ in the variables $x_1,\dots,x_n$. Let's see what happens on the other hand to the last part:
$$\sqrt{\text{last part of $\LIN(I)$}}=\big(\sqrt{f_jy_j/x_k}\mid \text{$x_k$ divides $f_j$, }k\in\{1,\dots,n\}, j\in\{1,\dots,m\}\big).$$
Of course one has $\Supp(\sqrt u)=\Supp(u)$ for any monomial $u$. Moreover, 
\begin{equation}\label{supportivari}
\#\Supp\Big(\frac{f_j}{x_k}\Big)=
\begin{cases}
\#\Supp(f_j)&\text{if $x_k$ divides $\frac{f_j}{x_k}$,}\\
\#\Supp(f_j)-1&\text{otherwise.}
\end{cases}
\end{equation}
If $\#\Supp(f_j/x_k)\ge a+\sign(b)$, then the generator $\sqrt{\frac{f_jy_j}{x_k}}=\sqrt{\frac{f_j}{x_k}}y_j$ is redundant because it's a multiple of a generator coming from the complete part.
Hence we are only interested in the generators such that $\#\Supp(f_j/x_k)<a+\sign(b)$.

Notice furthermore that we  have $\#\Supp(f_j)\ge a$: indeed, assume that $\#\Supp(f_j)=a'<a$. Then one would have $\deg(f_j)\le a'M<aM\le d$, a contradiction. So, by~\eqref{supportivari}, $\#\Supp(f_j/x_k)<a+\sign(b)$ actually happens when
$$\begin{cases}
\#\Supp(f_j)<a+\sign(b)&\text{if $x_k$ divides $\frac{f_j}{x_k}$,}\\
\#\Supp(f_j)-1<a+\sign(b)&\text{otherwise.}
\end{cases}$$
The first case can happen only if $\#\Supp(f_j)=a$ and $\sign(b)=1$. The second case can happen only when $\#\Supp(f_j)=a$ and for any $b$, or when $\#\Supp(f_j)=a+1$ and $\sign(b)=1$. Notice we cannot have simultaneously $\#\Supp(f_j)=a$ and $\sign(b)=1$, because then we would have $\deg(f_j)\le aM<aM+b=d$, a contradition.   So actually only the second case above can happen, when we have either $\#\Supp(f_j)=a$ and $b=0$, or when $\#\Supp(f_j)=a+1$ and $\sign(b)=1$. In short, when $\#\Supp(f_j)=a+\sign(b)$.


In conclusion, $\sqrt{\LIN(I)}$ is generated by $(x_1,\dots,x_n)^{a+\sign(b)}_\sqf$ and by the monomials $\sqrt{\frac{f_j}{x_k}}y_j$ where $x_k$ occurs in $f_j$ with exponent~$1$ and such that $\#\Supp(f_j)=a+\sign(b)$.  Now, if we have something as above, with $x_k$ occurring with exponent~$1$ and such that $\#\Supp(f_j)=a+\sign(b)$, then it means that $x_k$ is the only variable with exponent~$1$ inside $f_j$, and all the rest have exponent $M$. This is because we are assuming $M\ge 2$. If we have another variable with exponent~$<M$, then we can just ``move'' one exponent~$1$ with the other exponent $<M$, so that we get a monomial of degree $d$, with variables occurring all with exponent~$\le M$ and with support strictly smaller than that of $f_j$. So in other words $a+\sign(b)\le\#\Supp(f_j/x_k)$, and the generator $\sqrt{\frac{f_j}{x_k}}y_j$ turns out to be redundant.
\end{proof}

In the following we use the same notation as in Corollary~\ref{solitocorollariork} for the numbers~$r_k$, in view of Corollary~\ref{bnumsradical}.

\begin{theorem}\label{radhaslinearquotients}
The ideal $\sqrt{\LIN(I)}$ has linear quotients. Let us order the generators so that we first have all squarefree monomials of degree $a+\sign(b)$  in decreasing lexicographic order and then we have the radicals of the ``pathological'' generators of $I$. Then the numbers $r_k$ which come from those generators range between $0$ and $n-a-\sign(b)$. After that, in case $b=1$, we still have $p$ colon ideals (possibly with $p=0$). Order the pathological generators 
$$\sqrt{\frac{f_{j_1}}{x_{k_1}}}y_{j_1},\ \dots,\ \sqrt{\frac{f_{j_p}}{x_{k_p}}}y_{j_p}.$$
Let $r_k$ be the number of generators of the colon ideal by $\sqrt{\frac{f_{j_\ell}}{x_{k_\ell}}}y_{j_\ell}$. Define
$$t_\ell:=\#\Big\{s \in\{1,\dots,\ell-1\}\mid \frac{f_{j_s}}{x_{k_s}}=\frac{f_{j_\ell}}{x_{k_\ell}}\Big\}.$$ 
Then $r_k=n-a+t_\ell$.
\end{theorem}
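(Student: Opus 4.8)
The plan is to verify the hypotheses of Lemma~\ref{amazinglemma} directly for the proposed ordering of the generators of $\sqrt{\LIN(I)}$, then read off the numbers $r_k$ by counting the minimal generators of each successive colon ideal. First I would handle the block of squarefree monomials of degree $a+\sign(b)$: this is exactly the ideal $(x_1,\dots,x_n)^{a+\sign(b)}_\sqf$ in decreasing lexicographic order, which is a well-known squarefree lex segment and is polymatroidal, hence has linear quotients. For such a generator $u_i$, the colon ideal $(u_1,\dots,u_{i-1}):u_i$ is generated by variables $x_\ell$ where $\ell$ is an index not in $\Supp(u_i)$ but lying to the left of some index of $\Supp(u_i)$; since $u_i$ has degree $a+\sign(b)$, there are at most $n-a-\sign(b)$ such $\ell$, so indeed $0\le r_k\le n-a-\sign(b)$ for these. (One should be slightly careful that we only want the $r_k$ counted against earlier generators in our \emph{whole} list, but since the pathological generators come strictly later, the count inside this block is unaffected.)

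The second, and substantive, part is the block of pathological generators $\sqrt{f_{j_\ell}/x_{k_\ell}}\,y_{j_\ell}$, which occur only when $b=1$; write $w_\ell:=\sqrt{f_{j_\ell}/x_{k_\ell}}$, a squarefree monomial of degree $a$ (since $\#\Supp(f_{j_\ell})=a+1$ and one exponent-$1$ variable $x_{k_\ell}$ is removed), so that the generator is $w_\ell y_{j_\ell}$. I would compute $(\,\text{all earlier generators}\,):w_\ell y_{j_\ell}$ using Remark~\ref{remarkcolon}. Against a complete-part generator $v$ (a squarefree monomial of degree $a+1$), we get $v/\gcd(v,w_\ell y_{j_\ell})=v/\gcd(v,w_\ell)$, and as $v$ ranges over \emph{all} squarefree monomials of degree $a+1$ these quotients produce exactly the $n-a$ variables $x_t$ with $t\notin\Supp(w_\ell)$: for each such $x_t$ choose $v=x_t w_\ell$, which has degree $a+1$ and is squarefree, giving quotient $x_t$; conversely every quotient is a single such variable because $\deg v-\deg\gcd\ge 1$ always and $v$ can exceed $w_\ell$ in at most... here one checks degrees force it to be a single variable after minimalizing. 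Against an earlier pathological generator $w_s y_{j_s}$ with $s<\ell$: the quotient is $(w_s y_{j_s})/\gcd(w_s y_{j_s},\,w_\ell y_{j_\ell})$. If $j_s\ne j_\ell$ the $y$-variables differ, so $y_{j_s}$ divides the quotient and the quotient is not a single variable, hence contributes nothing new to the minimal generating set (it is divisible by $y_{j_s}$ while none of the $x_t$-generators are, but we must check it is not itself a needed minimal generator — it is not, because the criterion of Lemma~\ref{amazinglemma} will instead be satisfied via an earlier $x_t$ generator; alternatively one argues the colon ideal is generated by variables and this element, having a $y$-factor of positive... no—actually $y_{j_s}\mid$quotient shows the quotient is a proper multiple of $y_{j_s}$ only if $y_{j_s}$ itself lies in the colon ideal, which it does not. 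The clean route is Lemma~\ref{amazinglemma}: we must exhibit for each earlier $u_j$ a single-variable witness, and we take it from the complete-part block). If instead $j_s=j_\ell$ then necessarily $x_{k_s}\ne x_{k_\ell}$ (distinct generators of the last part with the same $y$), but also we could have $f_{j_s}/x_{k_s}=f_{j_\ell}/x_{k_\ell}$, i.e. $w_s=w_\ell$: in that case the two generators $w_\ell y_{j_\ell}$ and $w_s y_{j_s}=w_\ell y_{j_\ell}$ coincide as monomials, which cannot happen for distinct generators — so within one fixed $j$ the condition $w_s=w_\ell$ forces a repetition that we've excluded. Hence $t_\ell$, counting earlier pathological generators with $f_{j_s}/x_{k_s}=f_{j_\ell}/x_{k_\ell}$, counts precisely the generators giving quotient a variable among the $y$-free ones... wait: if $w_s=w_\ell$ and $j_s\ne j_\ell$ then the quotient $(w_\ell y_{j_s})/\gcd = y_{j_s}$, a single variable, and it is a \emph{new} minimal generator not among the $x_t$'s. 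So each such $s$ contributes one extra variable $y_{j_s}$.

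Assembling: the minimal generators of the colon ideal by the $\ell$-th pathological generator are the $n-a$ variables $x_t$ with $t\notin\Supp(w_\ell)$, together with the $t_\ell$ variables $y_{j_s}$ for the earlier pathological generators $s$ with $w_s=w_\ell$; one must check these are all distinct (they are: one family is in $x$'s, the other in $y$'s) and that nothing else appears (quotients against earlier pathological generators with $w_s\ne w_\ell$ but $j_s=j_\ell$ cannot occur as noted; quotients with $j_s\ne j_\ell$ and $w_s\ne w_\ell$ are divisible by $y_{j_s}$ and by at least one $x$, hence non-minimal as they are proper multiples of the $y_{j_s}$ already in the ideal only if $y_{j_s}$ is in the ideal — and here $y_{j_s}$ \emph{is} in the ideal iff some earlier generator with the same $y$ has the same $w$, which is a different condition, so one must argue instead that these quotients are redundant because they are multiples of one of the $x_t$ generators; this divisibility is the one routine check I would spell out). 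Hence $r_k=(n-a)+t_\ell$, as claimed. The linear-quotients property for the whole list then follows since every colon ideal we encountered is generated by linear forms. The main obstacle I anticipate is precisely this last bookkeeping: showing that the quotients of the "mismatched" pathological generators (different $y$, different $w$) are genuinely redundant — i.e. divisible by one of the complete-part-induced variables $x_t$ — and thereby pinning down the minimal generating set exactly, rather than merely bounding it.
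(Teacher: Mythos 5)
Your proof is correct and follows essentially the same route as the paper's: compute the successive colon ideals directly, note that the complete block behaves like the squarefree Veronese ideal, that each pathological $w_\ell y_{j_\ell}$ picks up the $n-a$ variables $x_t$ with $t\notin\Supp(w_\ell)$ from complete-part quotients, and that earlier pathological generators with $w_s=w_\ell$ (equivalently $f_{j_s}/x_{k_s}=f_{j_\ell}/x_{k_\ell}$, since both sides are determined by $\Supp(w_\cdot)$ when $b=1$) contribute the extra $y_{j_s}$'s. You are in fact more explicit than the paper, which only asserts a lower bound on $r_k$, in ruling out further minimal generators via the observation that $w_s\ne w_\ell$ (both squarefree of degree $a$) forces $\Supp(w_s)\setminus\Supp(w_\ell)\ne\emse$; also note that each pathological $f_j$ has a \emph{unique} exponent-$1$ variable, so distinct pathological generators carry distinct $y$-indices and your $j_s=j_\ell$ sub-case never actually arises.
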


\begin{proof}
The first numbers $r_k$ behave as in the case studied for the linearization of a squarefree ideal, for which one can see Proposition~\ref{rkpartecompletasqf}. When we take the colon by $\sqrt{\frac{f_{j_\ell}}{x_{k_\ell}}}y_{j_\ell}$, we get for sure as generators at least each $x_i\notin\Supp(f_{j_\ell}/x_{k_\ell})$, and there are $n-a$ of them. Additionally, if we have $\frac{f_{j_s}}{x_{k_s}}=\frac{f_{j_\ell}}{x_{k_\ell}}$ for some $s<\ell$, then $\sqrt{f_{j_s}/x_{k_s}}=\sqrt{f_{j_\ell}/x_{k_\ell}}$. So, when taking the colon we get the extra generator $y_{j_s}$.
\end{proof}

\begin{corollary}\label{bnumsradical}
With the same notation as above, we have
$$\beta_i(\sqrt{\LIN(I)})=\binom{i+a+\sign(b)-1}{a+\sign(b)-1}\binom n{i+a+\sign(b)}+\sum_{\ell=1}^p\binom{n-a+t_\ell}i.$$
\end{corollary}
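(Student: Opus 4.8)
The plan is to read off the Betti numbers directly from the linear-quotients structure established in Theorem~\ref{radhaslinearquotients}, using the standard formula for ideals with linear quotients recalled in Corollary~\ref{solitocorollariork}. Since $\sqrt{\LIN(I)}$ is equigenerated (all generators have degree $a+\sign(b)$, as computed in the previous Proposition) and has linear quotients with respect to the ordering given in Theorem~\ref{radhaslinearquotients}, Corollary~\ref{solitocorollariork} applies and gives
$$\beta_i\big(\sqrt{\LIN(I)}\big)=\sum_k\binom{r_k}{i},$$
so the entire task is to sum the binomials $\binom{r_k}{i}$ over the two groups of generators described in Theorem~\ref{radhaslinearquotients}.

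First I would handle the contribution of the ``complete part'' generators, namely the squarefree monomials of degree $a+\sign(b)$ listed in decreasing lexicographic order. By Theorem~\ref{radhaslinearquotients} (referring back to Proposition~\ref{rkpartecompletasqf}), the numbers $r_k$ arising here range over $0,1,\dots,n-a-\sign(b)$, and one needs to know \emph{how many} generators give each value $r_k=s$. This is the standard count for the lexicographic shelling of the full squarefree power $(x_1,\dots,x_n)^{a+\sign(b)}_{\sqf}$: the generators with exactly $s$ minimal colon generators are counted by a product of binomial coefficients, and summing $\binom{s}{i}$ against these multiplicities and applying the Vandermonde/hockey-stick identity collapses the sum to $\binom{i+a+\sign(b)-1}{a+\sign(b)-1}\binom{n}{i+a+\sign(b)}$. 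I would either cite Proposition~\ref{rkpartecompletasqf} (or Corollary~\ref{bnumbersforlin}) for this closed form directly, since it is exactly the Betti-number computation for the squarefree complete part already carried out earlier in the paper, or reproduce the short binomial identity.

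Second I would handle the $p$ ``pathological'' generators $\sqrt{f_{j_\ell}/x_{k_\ell}}\,y_{j_\ell}$ for $\ell=1,\dots,p$ (this group is empty unless $b=1$). By Theorem~\ref{radhaslinearquotients} the corresponding value is $r_k=n-a+t_\ell$, where $t_\ell$ counts the earlier pathological generators with the same squarefree part. These generators simply contribute $\sum_{\ell=1}^{p}\binom{n-a+t_\ell}{i}$ to the total. Adding the two contributions yields exactly the claimed formula.

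The only genuine obstacle is bookkeeping: making sure the two index groups in Theorem~\ref{radhaslinearquotients} are exhaustive and non-overlapping (they are, by the description of $G(\sqrt{\LIN(I)})$ in the preceding Proposition), and correctly quoting the closed-form evaluation of $\sum_k\binom{r_k}{i}$ for the lexicographically-shelled squarefree complete part from the earlier computation (Proposition~\ref{rkpartecompletasqf}/Corollary~\ref{bnumbersforlin}). No new ideas are needed beyond Corollary~\ref{solitocorollariork} and a hockey-stick identity.
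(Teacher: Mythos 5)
Your proposal matches the paper's proof: the paper likewise invokes Corollary~\ref{solitocorollariork} together with Theorem~\ref{radhaslinearquotients} for the pathological generators, and cites the earlier closed-form evaluation (Remark~\ref{bettinbsveronese}, via Proposition~\ref{rkpartecompletasqf}) of $\sum_k\binom{r_k}{i}$ over the squarefree complete part. The decomposition into the two generator groups and the binomial-identity collapse are exactly the steps the paper uses.
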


\begin{proof}
For the complete part see Remark~\ref{bettinbsveronese}. The rest follows from the previous theorem and Corollary~\ref{solitocorollariork}.
\end{proof}


\section{Linearization in the squarefree case}\label{linearsquarefree}

In this section $\K$ may be any field, not necessarily of characteristic zero. Indeed, the properties here discussed are purely combinatorial and do not depend on the characteristic of $\K$. 

\begin{notation}
Let $I$ be a squarefree monomial ideal in $S:=\K[x_1,\dots,x_n]$, with minimal system of monomial generators $G(I)=\{f_1,\dots,f_m\}$, where $\deg(f_j)=d$ for all $j$.
\end{notation}

The results in this section are written for $\LIN(I)$, but it's very easy to adapt them to $\Lin(I)$, as explained in the following.

\begin{remark}\label{differenzaLinLIN}
If we specify the definition of $\LIN$ to this case, we get, inside the polynomial ring $R:=\K[x_1,\dots,x_n,y_1,\dots,y_m]$, the ideal
\begin{align*}
\LIN(I)&=\big(x_{i_1}x_{i_2}\dots x_{i_d}\mid 1\le i_1<i_2<\dots<i_d\le n\big)\\
&\quad+\big(f_jy_j/x_k \mid \text{$x_k$ divides $f_j$},\ k=1,\dots,n,\ j=1,\dots,m\big).
\end{align*}
That is, in the complete part we have all squarefree monomials of degree $d$ in the $x_i$'s. In the last part, for each generator\til$f_j$ of $I$ we add a new variable\til$y_j$ and $d$ generators $f_jy_j/x_k$, where we replace the only occurrence of each\til$x_k$ in the support of $f_j$ by $y_j$. 
The difference between $\Lin(I)$ and $\LIN(I)$ is not  significant in the squarefree case. 
In the complete part of $\Lin(I)$ we don't have all squarefree monomials of degree $d$ in all the $n$ variables, but instead we have all squarefree monomials of degree $d$ in the variables that appear in the generators of $I$. So, what one needs to do in order to write a version for $\Lin(I)$ of the results in this section is to replace $x_1,\dots,x_n$ by the variables which are actually used by the generators of $I$. Or alternatively one can assume that each of the variables $x_1,\dots,x_n$ appears in some generator of $I$, in which case (in the squarefree situation!) we have $\LIN(I)=\Lin(I)$. As a side note, observe that the slight difference makes $\LIN(I)$ a direct generalization of the Booth--Lueker ideals associated to the graphs introduced in Definition~\ref{defBL}.
\end{remark}

\begin{notation}
Instead of writing $v=(1,\dots,1)$ in the formula $J_{\le v}$ introduced in Notation~\ref{notationcropping}, for the squarefree case we simply write $J_\sqf$.
\end{notation}

\begin{remark}\label{remarksqlin}
One can see that $I$ is squarefree if and only if $\Lin(I)$ is squarefree. Both implications are easy: the ``only if'' implication is clear by construction and the ``if'' one follows from the fact that the complete part of $\Lin(I)$ is squarefree if and only if $I$ is. The last part of $\Lin(I)$ would in general not be enough to detect the squarefreeness of $I$, if for instance $I$ had a generator like $x_1^2$. But for degree~$d>2$  it's true that $I$ is squarefree if and only if the last part of $\Lin(I)$ is. Same goes for $\LIN$.
\end{remark}

The next is an example of the abovementioned slight difference between $\LIN(I)$ and $\Lin(I)$ in the squarefree case, and of how one can obtain from the results about~$\LIN(I)$ the corresponding ones for~$\Lin(I)$. The number of minimal generators of $\LIN(I)$ is
$$\#G(\LIN(I))=\binom nd+md.$$
Let $c:=\#\bigcup_{j=1}^m\Supp(f_j)$. The number of minimal generators of $\Lin(I)$ is
$$\#G(\Lin(I))=\binom cd+md.$$

\subsection{Betti numbers of $\LIN(I)$}

As proven in Theorem~\ref{mainthm}, $\Lin(I)$ and $\LIN(I)$ have linear quotients for any equigenerated ideal~$I$, so in particular for squarefree equigenerated ideals. In this section we provide explicit formulas for the Betti numbers of $\LIN(I)$ in case $I$ is squarefree. As already remarked above, one can turn the results here obtained for $\LIN(I)$ into analogous ones for $\Lin(I)$ simply by replacing $n$ by the number of variables, among $x_1,\dots,x_n$, that actually appear in the minimal monomial generators of~$I$.

In virtue of Corollary~\ref{bnumsradical} and Theorem~\ref{mainthm}, 
we can use the formula
$$\beta_i\big(\LIN(I)\big)=\sum_{k=1}^{\#G(\LIN(I))}\binom{r_k}i,\qquad \text{for $i\ge 0$,}$$
where $r_k$ is the number of generators of $(g_1,\dots,g_{k-1}):g_k$ and where the polynomials $g_j$ are the generators of $\LIN(I)$ as listed in Theorem~\ref{mainthm}. We determine the numbers~$r_k$ in Proposition~\ref{rkpartecompletasqf} and Proposition~\ref{lastpartrk}.

\begin{example}\label{esempionesqfree}
Consider the ideal 
$$I=(x_1x_2x_3,\ x_1x_2x_4,\ x_1x_2x_5)\quad\subset\quad S=\K[x_1,\dots,x_5].$$
The hypergraph corresponding to this ideal (see Section~\ref{backgroundhypgraphs}) consists of three triangles that share the common edge $x_1x_2$ (which has codimension\til$1$). The $*$-linearization of $I$ lives in $R=\K[x_1,\dots,x_5,y_1,y_2,y_3]$, and it's the ideal
\begin{align*}
\LIN(I)&=(x_1x_2x_3,\ x_1x_2x_4,\dots,x_2x_4x_5,\ x_3x_4x_5)\\
&\quad+(y_1x_2x_3,\ x_1y_1x_3,\ x_1x_2y_1,\dots,y_3x_2x_5,\ x_1y_3x_5,\ x_1x_2y_3),
\end{align*}
with $\binom nd+md=\binom53+3\times3=19$ generators. In fact, in this example one has $\Lin(I)=\LIN(I)$. One can compute the Betti table of this ideal,
$$\beta(\LIN(I))=
\begin{array}{c|cccccc}
&0&1&2&3&4&5\\
\hline
3&19&45&43&21&6&1.\\
\end{array}$$
Next we compute the colon ideals $J_k:=(g_1,\dots,g_{k-1}):g_k$. For $g_k$ in the complete part of $\LIN(I)$ we get
\begin{align*}
J_1&=(0)\\
J_2&=(x_3)\\
J_3&=(x_3,\  x_4)\\
J_4&=(x_2)\\
J_5&=(x_2,\  x_4)\\
J_6&=(x_2,\  x_3)\\
J_7&=(x_1)\\
J_8&=(x_1,\  x_4)\\
J_9&=(x_1,\  x_3)\\
J_{10}&=(x_1,\  x_2)
\end{align*}
and, after that for $g_k$ in the last part we get
\begin{align*}
J_{11}&=(x_1,\  x_4,\  x_5)\\
J_{12}&=(x_2,\  x_4,\  x_5)\\
J_{13}&=(x_3,\  x_4,\  x_5)\\
J_{14}&=(x_1,\  x_3,\  x_5)\\
J_{15}&=(x_2,\  x_3,\  x_5)\\
J_{16}&=(x_3,\  x_4,\  x_5,\ y_1)\\
J_{17}&=(x_1,\  x_3,\  x_4)\\
J_{18}&=(x_2,\  x_3,\  x_4)\\
J_{19}&=(x_3,\  x_4,\  x_5,\ y_1,\ y_2).
\end{align*}
This example is quite small but we can observe some facts that hold in general:
the colon ideals $J_k$ with $g_k$ in the complete part are very regular, they clearly don't depend on the generators of $I$, and they have nothing to do with the variables $y_j$. They have at most $n-d$ generators, in this case $5-3=2$. On the other hand the ideals $J_k$ with $g_k$ in the last part of $\LIN(I)$ have more generators, at least $n-d+1$, and some of these ideals happen to have additional generators, consisting of some variables $y_j$.
\end{example}

\begin{lemma}\label{randpeeva}
Let $m_1,\dots,m_{\binom nd}\in \K[x_1,\dots,x_n]$ be all the squarefree monomials of degree $d$ in $n$ variables. Assume these monomials are ordered in decreasing lexicographic order.  For each $k$, denote by $r_k$ the number of minimal monomial generators of $(m_1,\dots,m_{k-1}):m_k$. For any monomial $m$, denote  $\max(m):=\max\{i\mid\text{$x_i$ divides $m$}\}$. Then 
$$r_k=\max(m_k)-d.$$
\end{lemma}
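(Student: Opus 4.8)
The plan is to compute the minimal generators of the colon ideal $(m_1,\dots,m_{k-1}):m_k$ directly, using the standard formula of Remark~\ref{remarkcolon}. Write $m_k=x_{j_1}\cdots x_{j_d}$ with $j_1<\dots<j_d$, so $\max(m_k)=j_d$. By Remark~\ref{remarkcolon} the colon ideal is generated by the monomials $m_p/\gcd(m_p,m_k)$ for $p<k$; since all monomials are squarefree of degree $d$, each such quotient is the squarefree monomial on the variable set $\Supp(m_p)\setminus\Supp(m_k)$. The first step is to observe that a monomial $m_p$ precedes $m_k$ in decreasing lex order precisely when $\Supp(m_p)>_\Lex\Supp(m_k)$ as sorted index sets, and to use this to identify which variables $x_\ell$ can appear as a degree-$1$ minimal generator of the colon ideal.

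\medskip

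The key claim is: $x_\ell$ is a minimal generator of $(m_1,\dots,m_{k-1}):m_k$ if and only if $x_\ell\nmid m_k$ and $\ell<\max(m_k)=j_d$. For the ``if'' direction, given such an $\ell<j_d$ with $x_\ell\nmid m_k$, I would build a monomial $m_p$ by taking $m_k$, removing $x_{j_d}$, and inserting $x_\ell$; this $m_p$ is squarefree of degree $d$, satisfies $m_p>_\Lex m_k$ (since the two support sequences first differ at a position where $m_p$ has the smaller-index variable $\ell$ — I need to check carefully that replacing the \emph{largest} element $j_d$ by a smaller element $\ell\notin\Supp(m_k)$ indeed increases the monomial in lex order, which holds because the sorted sequence of $m_p$ agrees with that of $m_k$ up to the first index $\geq\ell$ and then has $\ell$ where $m_k$ has something larger), and $m_p/\gcd(m_p,m_k)=x_\ell$. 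Hence $x_\ell$ lies in the colon ideal. For the ``only if'' direction, I would show that no degree-$1$ generator $x_\ell$ with $\ell>j_d$ can occur: if $m_p/\gcd(m_p,m_k)=x_\ell$ then $\Supp(m_p)=(\Supp(m_k)\setminus\{x_t\})\cup\{x_\ell\}$ for some $t$; if $\ell>j_d$ then the sorted sequence of $m_p$ is lexicographically \emph{smaller} than that of $m_k$ (we have replaced some element by something larger than all of them), contradicting $m_p>_\Lex m_k$, i.e. $p<k$. Also $\ell=j_d$ is impossible since that would force $x_\ell\mid m_k$. Finally I would argue that the colon ideal is \emph{generated} by these linear forms: any generator $m_p/\gcd(m_p,m_k)$ with $p<k$ has support contained in $\{x_\ell:\ell<j_d,\ x_\ell\nmid m_k\}$ — indeed if $m_p>_\Lex m_k$ then at the first position where the sorted supports differ $m_p$ has the smaller index, and one checks that every variable in $\Supp(m_p)\setminus\Supp(m_k)$ has index $<j_d$ (since $m_p$ and $m_k$ have the same degree, introducing a variable of index $>j_d$ would have to be compensated, but $j_d$ is already the max of $m_k$, forcing a lex decrease) — so each such generator is divisible by one of the linear generators. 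Counting: the set $\{\ell:\ell<j_d,\ x_\ell\nmid m_k\}$ has exactly $(j_d-1)-(d-1)=j_d-d=\max(m_k)-d$ elements, since among $\{1,\dots,j_d-1\}$ exactly $d-1$ indices (namely $j_1,\dots,j_{d-1}$) divide $m_k$.

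\medskip

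The main obstacle I anticipate is bookkeeping the lexicographic comparisons correctly — specifically, translating the statement ``$m_p$ comes before $m_k$ in decreasing lex order'' into the clean combinatorial condition on sorted support sequences, and making sure the replacement construction in the ``if'' direction genuinely produces a lex-larger monomial in every case (the subtle point being that one replaces the maximal variable of $m_k$, which is exactly what guarantees the increase). Once that dictionary between lex order on degree-$d$ squarefree monomials and lex order on increasing $d$-subsets of $[n]$ is set up, the rest is a short counting argument. An alternative, perhaps cleaner, route would be to invoke Lemma~\ref{amazinglemma} to verify linear quotients as a sanity check, but the direct computation of $r_k$ is unavoidable for the precise formula, so I would carry it out as above.
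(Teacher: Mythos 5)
Your approach is the same as the paper's: compute the colon ideal $(m_1,\dots,m_{k-1}):m_k$ directly via Remark~\ref{remarkcolon}, identify the minimal generators as certain variables $x_\ell$ with $\ell<\max(m_k)$ and $x_\ell\nmid m_k$, and count. The construction in the ``if'' direction (replacing the largest variable $x_{j_d}$ of $m_k$ by $x_\ell$) and the final count are correct.

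However, one intermediate claim in your generation step is false as stated: that \emph{every} variable in $\Supp(m_p)\setminus\Supp(m_k)$ has index $<j_d$ whenever $m_p>_\Lex m_k$. Take $n=6$, $d=3$, $m_k=x_1x_4x_5$ (so $j_d=5$) and $m_p=x_1x_2x_6$. The sorted supports $(1,2,6)$ and $(1,4,5)$ first differ at position $2$ with $2<4$, so $m_p>_\Lex m_k$; yet $6\in\Supp(m_p)\setminus\Supp(m_k)$ and $6>j_d$. Your parenthetical justification (``introducing a variable of index $>j_d$ would have to be compensated \dots forcing a lex decrease'') implicitly treats the compensation as a single swap, which need not hold; in the example, $x_4x_5$ is replaced by $x_2x_6$.

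The gap is easy to close, and your earlier observation already contains what is needed: if $(i_1<\dots<i_d)$ and $(j_1<\dots<j_d)$ are the sorted supports of $m_p$ and $m_k$ and they first differ at position $t$ with $i_t<j_t$, then $i_t\notin\Supp(m_k)$ (it is strictly between $j_{t-1}$ and $j_t$) and $i_t<j_t\le j_d$. Hence $m_p/\gcd(m_p,m_k)$ is always divisible by \emph{some} $x_{i_t}$ with $i_t<j_d$ and $x_{i_t}\nmid m_k$, which is all that is needed to conclude the linear forms generate the colon ideal. You should replace ``every variable \dots has index $<j_d$'' by ``at least one variable \dots has index $<j_d$ and is not in $\Supp(m_k)$,'' which is what the first-differing-position argument actually proves.
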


\begin{proof}
For $k=1$ this is clear. Let us  consider $m_k$ for $k>1$. The  variables $x_i$ with $i<\max(m_k)$ which are not in the support of $m_k$ are 
$$\max(m_k)-1-(d-1)=\max(m_k)-d.$$
Denote $j:=\max(m_k)$. For each $i<j$ such that $x_i\notin\Supp(m_k)$, we have that $\frac{x_i}{x_j}m_k$ has degree $d$ and comes before $m_k$ in the lexicographic order. So $(m_1,\dots,m_{k-1}):m_k$ has as generators all such variables $x_i$. Let us see that there cannot be more generators. There might be a monomial $m$ that comes lexicographically before $m_k$ and such that $\frac m{\gcd(m,m_k)}$ does not consist of just one variable, but in that case this monomial $m$ is divided by some variable $x_i$ with $i<j$, or else $m$ would not be smaller than $m_k$ in the lexicographic order.
\end{proof}

\begin{proposition}\label{rkpartecompletasqf}
The colon ideals  coming from the complete part of $\LIN(I)$ behave as follows. The numbers $r_1,r_2,\dots,r_{\binom nd}$ range between $0$ and $n-d$. Number $j\in\{0,\dots,n-d\}$ occurs as $r_k$ for
$$\binom{j+d-1}{d-1}$$
values of $k$.
\end{proposition}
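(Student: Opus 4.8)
The plan is to combine Lemma~\ref{randpeeva} with a straightforward counting argument. By Lemma~\ref{randpeeva}, for the $k$-th generator $m_k$ of the complete part (the squarefree monomials of degree $d$, in decreasing lexicographic order) we have $r_k=\max(m_k)-d$, where $\max(m_k)$ is the largest index of a variable dividing $m_k$. So the question reduces to: for how many squarefree monomials $m$ of degree $d$ in $x_1,\dots,x_n$ is $\max(m)$ equal to a prescribed value?

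First I would observe that a squarefree monomial $m$ of degree $d$ with $\max(m)=v$ corresponds bijectively to a choice of $d-1$ variables among $x_1,\dots,x_{v-1}$ (the remaining factors of $m$ other than $x_v$). Hence the number of such monomials is $\binom{v-1}{d-1}$. Setting $j:=r_k=v-d$, i.e.\ $v=j+d$, this count becomes $\binom{j+d-1}{d-1}$, which is exactly the claimed multiplicity. Since $\max(m)$ ranges over $\{d,d+1,\dots,n\}$ as $m$ ranges over all squarefree degree-$d$ monomials, the value $j=\max(m)-d$ ranges over $\{0,1,\dots,n-d\}$, establishing the stated range. As a sanity check one can note $\sum_{j=0}^{n-d}\binom{j+d-1}{d-1}=\binom{n}{d}$ by the hockey-stick identity, matching the total number of generators of the complete part.

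There is essentially no serious obstacle here: the only mild subtlety is the bookkeeping to confirm that the generators of the complete part of $\LIN(I)$ are precisely all $\binom{n}{d}$ squarefree degree-$d$ monomials (in the $*$-linearization case), so that Lemma~\ref{randpeeva} applies verbatim; this is immediate from Definition~\ref{defstarlinearization} together with Remark~\ref{differenzaLinLIN} in the squarefree setting, where $M=1$ forces $a_i\le 1$ for all $i$. After that, the proof is just the bijection described above and the resulting binomial identity. I would therefore structure the write-up as: (i) recall $r_k=\max(m_k)-d$ from Lemma~\ref{randpeeva}; (ii) count squarefree degree-$d$ monomials with a given value of $\max$; (iii) reindex by $j=\max-d$ to read off both the range $0\le j\le n-d$ and the multiplicity $\binom{j+d-1}{d-1}$.
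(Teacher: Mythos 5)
Your proof is correct and follows the same approach as the paper: apply Lemma~\ref{randpeeva} to get $r_k=\max(m_k)-d$, then count squarefree degree-$d$ monomials with prescribed maximum index, obtaining $\binom{j+d-1}{d-1}$ after the reindexing $j=\max-d$. The paper's own proof is a terser version of exactly this argument; your write-up just makes the bijection and the range explicit.
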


\begin{proof}
This follows immediately from Lemma~\ref{randpeeva}. For each $k$ such that $\max(m_k)=j+d$, we have that $j=r_k=\binom{j+d-1}{d-1}$ is the number of squarefree monomials of degree $d$ with maximum equal to $j+d$.
\end{proof}

\begin{remark}\label{bettinbsveronese}
We recover the well-known Betti numbers of the squarefree power $C:=(x_1,\dots,x_n)^d_{\sqf}$ as
\begin{align*}
\beta_i(C)&=\sum_{k=1}^{\binom nd}\binom{r_k}i\\
&=\sum_{j=0}^{n-d}\binom{j+d-1}{d-1}\binom ji\\
&\stackrel{(*)}=\binom{i+d-1}{d-1}\sum_{j=0}^{n-d}\binom{j+d-1}{i+d-1}\\
&=\binom{i+d-1}{d-1}\sum_{j=d-1}^{n-1}\binom j{i+d-1}\\
&=\binom{i+d-1}{d-1}\binom n{i+d},
\end{align*}
where in~$(*)$ we used that, by direct calculation,
$$\binom{j+d-1}{d-1}\binom ji=\binom{i+d-1}{d-1}\binom{j+d-1}{i+d-1}$$
 and in the last equality the formula
$$\sum_{j=d-1}^{n-1}\binom j{i+d-1}=\binom n{i+d},$$
which is a specialization of the identity~(11) in Section~1.2.6 of~\cite{K73}.
\end{remark}

In order to describe what happens to the rest of the colon ideals, namely those that arise from the last part of $\LIN(I)$, we need some definitions. There probably already is a better terminology   in the literature to express these things, but due to my lack of knowledge I introduce the following terms.

\begin{definition}\label{edgemultiplicity}
We call a monomial $u=x_{i_1}\cdots x_{i_{d-1}}$, with $i_1<\dots<i_{d-1}$, a \tbs{$(d-1)$-edge of $I$} if $u$ divides some generator of $I$. Let us call \tbs{multiplicity} of a $(d-1)$-edge $u$ of $I$ the number
$$\mult(u):=\#\{f_i\in G(I)\mid\text{$u$ divides $f_i$}\},$$
where $G(I)=\{f_1,\dots,f_m\}$ as in all the rest of the section. Call a \tbs{$j$-cluster} a set of cardinality $j$ consisting of generators of $I$ that are divided by a same $(d-1)$-edge $u$.
\end{definition}

Consider a graph and the corresponding edge ideal. Then a $1$-edge would be a single variable dividing some generator of the edge ideal, that is, it would correspond to a vertex which is touched by some  edge---with the ordinary meaning of the word ``edge''---of the graph. The multiplicity of this $1$-edge/vertex means the number of edges touching it, namely the degree of the vertex. So \emph{the multiplicity here defined generalises the graph-theoretical notion of degree}.

\begin{example}
Continuing Example~\ref{esempionesqfree}, for instance $x_1x_5$ is a $2$-edge of $I$, since it divides the generator $f_3=x_1x_2x_5$. The multiplicity of the $2$-edge $x_1x_5$ is $1$, since $x_1x_5$ divides only the generator $f_3$. Another $2$-edge is $x_1x_2$, and this one has multiplicity $3$, as it divides all three generators. The presence of such a $2$-edge of multiplicity $3$ turns out to be the reason why in Example~\ref{esempionesqfree} we have a quotient ideal with an extra $y$-generator and a quotient ideal with two extra $y$-generators.
To conclude this example, notice that the set $\{x_1x_2x_4,x_1x_2x_5\}$ is a $2$-cluster because all of its elements are divided by the same $2$-edge $x_1x_2$. Similarly, the set $G(I)$  is a $3$-cluster.
\end{example}

\begin{proposition}\label{lastpartrk}
The numbers $r_k$ coming from the last part of $\LIN(I)$ range from $n-d+1$ and above.
For each integer $j\ge2$, consider the maximal $j$-clusters, that is, $j$-clusters that are not part of a $(j+1)$-cluster. For each maximal $j$-cluster there is a colon ideal with $n-d+2$ generators, a colon ideal with $n-d+3$ generators,\dots, up to a colon ideal with $n-d+j$ generators. Any maximal $j$-cluster has its own set of such ideals. All other colon ideals have $n- d+1$ generators.
\end{proposition}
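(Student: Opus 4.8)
The plan is to compute directly the generators of each colon ideal $(g_1,\dots,g_{k-1}):g_k$ when $g_k=\frac{f_j}{x_k}y_j$ ranges over the last part, using the explicit description in Remark~\ref{remarkcolon} together with the ordering fixed in Theorem~\ref{mainthm} (last-part generators listed first by increasing $j$, then by increasing $k$). Since $g_k$ is the only last-part generator processed so far that involves $y_j$ unless it shares $y_j$ with earlier generators $\frac{f_j}{x_{k'}}y_j$ with $k'<k$, the colon ideal splits naturally into two contributions: the generators coming from the complete part and the earlier last-part generators with different $y$-variables, and the generators coming from the earlier $\frac{f_j}{x_{k'}}y_j$'s with the \emph{same} $y_j$.

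First I would analyze the contribution of the complete part. For $g_k=\frac{f_j}{x_k}y_j$, a squarefree degree-$d$ monomial $m$ in $G(P)$ contributes $\frac{m}{\gcd(m,g_k)}$; since $g_k$ has degree $d$ and involves exactly the $d-1$ variables in $\Supp(f_j)\setminus\{x_k\}$ plus $y_j$, and $m$ involves only $x$-variables, one checks that the minimal such contributions are exactly the variables $x_i$ with $x_i\notin\Supp(f_j/x_k)$, of which there are $n-(d-1)=n-d+1$. (Here one uses, as in Lemma~\ref{randpeeva}, that any non-variable colon element is divisible by one of these $x_i$, hence redundant.) This already gives the baseline count $n-d+1$ and shows it is always achieved, handling the last sentence of the statement. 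The earlier last-part generators $\frac{f_\mu}{x_\alpha}y_\mu$ with $\mu<j$ contribute $\frac{f_\mu}{x_\alpha}y_\mu$ itself (as $y_\mu\nmid g_k$), which is divisible by some $x_i\notin\Supp(f_j/x_k)$ unless $\Supp(f_\mu/x_\alpha)\subseteq\Supp(f_j/x_k)$, i.e.\ (both being squarefree of the same degree $d-1$) unless $\frac{f_\mu}{x_\alpha}=\frac{f_j}{x_k}$ — but that forces $f_\mu$ and $f_j$ to share the $(d-1)$-edge $u=f_j/x_k$, so this is precisely where cluster structure enters.

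The key step is then bookkeeping the extra generators. When $g_k=\frac{f_j}{x_k}y_j$, writing $u:=f_j/x_k$ (a $(d-1)$-edge of multiplicity $\mathrm{mult}(u)\ge 1$), the additional minimal generators beyond the $n-d+1$ baseline are exactly the variables $y_\mu$ for those $f_\mu$, with $\mu<j$, that are also divided by $u$, \emph{plus} possibly $y_j$ itself if an earlier $g_{k'}=\frac{f_j}{x_{k'}}y_j$ (same $j$, $k'<k$) satisfies $f_j/x_{k'}=u$, which cannot happen for $k'\ne k$ since $f_j$ is squarefree. So the number of extra generators equals $\#\{\mu<j : u\mid f_\mu\}$. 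As $k$ (equivalently, as we run over the $d$ values $x_k\in\Supp(f_j)$, each giving a distinct $(d-1)$-edge $f_j/x_k$) and then over $j$, the $(d-1)$-edge $u$ of multiplicity $\nu:=\mathrm{mult}(u)$ gets visited once for each generator it divides; if its divisors in $G(I)$ are $f_{j_1},\dots,f_{j_\nu}$ with $j_1<\dots<j_\nu$, the corresponding colon ideals acquire $0,1,\dots,\nu-1$ extra generators respectively, i.e.\ sizes $n-d+1,\,n-d+2,\,\dots,\,n-d+\nu$. Recasting this in terms of maximal $j$-clusters (a $(d-1)$-edge of multiplicity $\nu$ being exactly a maximal $\nu$-cluster, assuming distinct edges give distinct generator-sets, which one should note holds when $d\ge 2$) yields the stated result.

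The main obstacle I expect is the careful verification that no \emph{other} colon elements appear: one must rule out contributions from complete-part monomials $m$ or earlier last-part generators that produce a minimal non-variable (or a variable $y_\mu$ not accounted for) — this is the $\gcd$-divisibility argument, straightforward but needing care because $g_k$ carries the variable $y_j$ with exponent $1$ and the $x$-part of $g_k$ has a ``hole'' at $x_k$. A secondary subtlety is the identification ``$(d-1)$-edge of multiplicity $\nu$'' $\longleftrightarrow$ ``maximal $\nu$-cluster'': distinct $(d-1)$-edges can in principle divide the same set of generators, so the phrasing in terms of clusters implicitly double-counts unless one argues this does not occur for the ideals in question, or interprets the count with multiplicity — I would add a remark clarifying this.
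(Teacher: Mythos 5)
Your proof is correct and follows essentially the same route as the paper: the complete part forces the $n-d+1$ baseline variables $x_i\notin\Supp(f_j/x_k)$ into the colon ideal, and an earlier last-part generator $\frac{f_\mu}{x_\alpha}y_\mu$ contributes an extra minimal generator $y_\mu$ precisely when $f_\mu/x_\alpha=f_j/x_k$, so the number of extras equals the number of earlier cluster-mates sharing the $(d-1)$-edge $u=f_j/x_k$. The subtlety you flag at the end resolves cleanly (and is worth the remark you suggest): if two distinct $(d-1)$-edges $u\ne u'$ both divide a degree-$d$ generator $f$, then $\deg\gcd(u,u')\le d-2$ forces $\lcm(u,u')=f$, so $u$ and $u'$ can jointly divide at most one generator, and therefore the correspondence between maximal $\nu$-clusters with $\nu\ge2$ and $(d-1)$-edges of multiplicity $\nu$ is a bijection, with no double-counting.
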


\begin{proof}
All the colon ideals of the form $(g_1,\dots,g_{k-1}):g_k$ with $g_k=\frac{f_\ell}{x_i}y_\ell$ in the last part of $\LIN(I)$ have at least $n-d+1$ generators. This is because among those generators we have at least each $x_\ell\notin\Supp\big(\frac{f_\ell}{x_i}\big)$, and those are exactly $n-d+1$.

Now, each of these ideals might also have some ``extra'' $y$-generators. For each $j\ge2$, each maximal $j$-cluster contributes to the list of ideals with an ideal with one extra $y$-generator, an ideal with two extra $y$-generators, \dots, up to an ideal with $j-1$ extra $y$-generators. The rest of the quotient ideals coming from the last part of $\LIN(I)$ have $n-d+1$ generators, and they correspond to the maximal $1$-clusters. This is  clear by Remark~\ref{remarkcolon}, as follows. Write $f_{\ell_1},\dots,f_{\ell_j}$ for all the generators inside a maximal $j$-cluster. So then we have a $(d-1)$-edge $u$ shared by  them all, so that $f_{\ell_p}=ux_{i_p}$ for all $p\in\{1,\dots,j\}$. In the last part of $\LIN(I)$ we have generators $\frac{f_{\ell_p}}{x_i}y_{\ell_p}$, for each $x_i$ dividing each $f_{\ell_p}$, namely each $x_i$ dividing $u$ and $x_{i_p}$. Then when taking the colon by $\frac{f_{\ell_p}}{x_i}y_{\ell_p}$ we get exactly the number of generators stated above.
\end{proof}

\begin{corollary}\label{bnumbersforlin}
Let us denote by $C_j\in\N$ the number of maximal $j$-clusters, that is, $j$-clusters that are not part of a $(j+1)$-cluster. Then
\begin{align*}
\beta_i\big(\LIN(I)\big)&=\binom{i+d-1}{d-1}\binom n{i+d}
+\binom{n-d+1}i\bigg(md-\sum_{j\ge2}(j-1)C_j\bigg)\\
&\quad+\sum_{j\ge2}C_j\sum_{k=2}^j\binom{n-d+k}i.
\end{align*}
\end{corollary}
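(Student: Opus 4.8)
The plan is to read off the result directly from the formula in Corollary~\ref{solitocorollariork}. Since $\LIN(I)$ has linear quotients with respect to the ordering of its generators given in Theorem~\ref{mainthm}, writing $g_1,\dots,g_{\#G(\LIN(I))}$ for those generators in that order and $r_k$ for the number of minimal generators of $(g_1,\dots,g_{k-1}):g_k$, Corollary~\ref{solitocorollariork} gives $\beta_i(\LIN(I))=\sum_k\binom{r_k}i$. I would split this sum into the part of the index set where $g_k$ lies in the complete part of $\LIN(I)$ and the part where $g_k$ lies in the last part, and evaluate each contribution using the analysis of the $r_k$ carried out earlier in the section.

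For the complete part, the sum of $\binom{r_k}i$ over the $\binom nd$ indices $k$ with $g_k$ in the complete part is exactly the Betti number computation already performed in Remark~\ref{bettinbsveronese} (the complete part of $\LIN(I)$ in the squarefree case is precisely $(x_1,\dots,x_n)^d_{\sqf}$, and the relevant $r_k$ are those described in Proposition~\ref{rkpartecompletasqf}); this evaluates to $\binom{i+d-1}{d-1}\binom n{i+d}$, the first summand of the claimed formula.

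For the last part, first I would note that there are exactly $md$ generators $\frac{f_j}{x_k}y_j$, since each of the $m$ generators $f_j$ is squarefree of degree $d$ and hence has $d$ variables in its support. By Proposition~\ref{lastpartrk}, every one of the corresponding $r_k$ is at least $n-d+1$; moreover, for each $j\ge2$ and each of the $C_j$ maximal $j$-clusters there is exactly one index $k$ in the last part with $r_k=n-d+2$, one with $r_k=n-d+3$, up to one with $r_k=n-d+j$, distinct maximal clusters contributing disjoint sets of such indices, and all remaining last-part indices having $r_k=n-d+1$. Hence the number of last-part indices with $r_k=n-d+1$ is $md-\sum_{j\ge2}(j-1)C_j$, and the last part contributes
\[
\Big(md-\sum_{j\ge2}(j-1)C_j\Big)\binom{n-d+1}i+\sum_{j\ge2}C_j\sum_{k=2}^{j}\binom{n-d+k}i
\]
to $\beta_i(\LIN(I))$. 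Adding this to the complete-part contribution yields the stated formula, with the usual conventions $\binom ab=0$ for $b>a$ handling degenerate cases (e.g. $n=d$, or $I$ principal, where no $C_j$ with $j\ge2$ appears).

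The one point requiring care — and the natural candidate for ``the hard part'' — is the combinatorial bookkeeping of the last step: verifying that the $md$ last-part colon ideals are partitioned correctly among the maximal $j$-clusters and that no colon ideal is attributed to two different clusters. However, this is exactly the content of Proposition~\ref{lastpartrk}, so for the present corollary it amounts only to counting the numbers $r_k$ recorded there; no further argument is needed.
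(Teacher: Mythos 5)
Your proof is correct and is exactly the derivation the paper intends: combine Corollary~\ref{solitocorollariork} with the description of the $r_k$ in Proposition~\ref{rkpartecompletasqf} (giving the complete-part contribution via Remark~\ref{bettinbsveronese}) and Proposition~\ref{lastpartrk} (giving the last-part contribution). Since the paper offers no separate proof for this corollary, you have simply spelled out the bookkeeping it leaves implicit, and your accounting of the $md$ last-part indices — $j-1$ ``special'' colon ideals per maximal $j$-cluster and $md-\sum_{j\ge2}(j-1)C_j$ remaining indices with $r_k=n-d+1$ — is the right one.
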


\begin{corollary}
Let $N:=\max\{j\mid C_j\ne 0\}$. Then 
\begin{align*}
\pd_R(\LIN(I))&=n-d+N,\\
\depth(R/\LIN(I))&=m+d-N-1.
\end{align*}
\end{corollary}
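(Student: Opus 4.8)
The plan is to read off the projective dimension directly from the closed formula for the Betti numbers in Corollary~\ref{bnumbersforlin}, and then obtain the depth from the Auslander--Buchsbaum formula. For the projective dimension, recall that $\pd_R(\LIN(I))$ is the largest $i$ with $\beta_i(\LIN(I))\ne 0$. Looking at the three summands in the formula for $\beta_i(\LIN(I))$: the term $\binom{i+d-1}{d-1}\binom{n}{i+d}$ vanishes for $i>n-d$; the term $\binom{n-d+1}{i}\big(md-\sum_{j\ge2}(j-1)C_j\big)$ vanishes for $i>n-d+1$ (and the coefficient $md-\sum_{j\ge2}(j-1)C_j$ is non-negative, indeed it counts the maximal $1$-clusters plus lower-order contributions, as in Proposition~\ref{lastpartrk}); and the term $\sum_{j\ge2}C_j\sum_{k=2}^j\binom{n-d+k}{i}$ has its highest-degree binomial $\binom{n-d+j}{i}$, which is non-zero precisely for $i\le n-d+j$. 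So the maximal index with a non-zero contribution is $n-d+N$, where $N=\max\{j\mid C_j\ne 0\}$, and this is achieved: for $i=n-d+N$ only the last summand can contribute, and there it contributes $C_N\binom{n-d+N}{n-d+N}=C_N>0$, with no cancellation since all terms in the formula are non-negative. Hence $\pd_R(\LIN(I))=n-d+N$.

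For the depth statement, I would apply the Auslander--Buchsbaum formula to the $R$-module $R/\LIN(I)$. Since $R=\K[x_1,\dots,x_n,y_1,\dots,y_m]$ is a polynomial ring in $n+m$ variables, it is Cohen--Macaulay of depth $n+m$, so
\begin{equation*}
\depth_R\big(R/\LIN(I)\big)=(n+m)-\pd_R\big(R/\LIN(I)\big).
\end{equation*}
Now $\pd_R(R/\LIN(I))=\pd_R(\LIN(I))+1$ from the short exact sequence $0\to\LIN(I)\to R\to R/\LIN(I)\to 0$, so $\pd_R(R/\LIN(I))=n-d+N+1$, and therefore
\begin{equation*}
\depth_R\big(R/\LIN(I)\big)=(n+m)-(n-d+N+1)=m+d-N-1,
\end{equation*}
as claimed.

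The only thing requiring a little care — and the place I would expect a referee to ask for a word of justification — is the claim that there is no cancellation at the top index, i.e.\ that $\beta_{n-d+N}(\LIN(I))$ is genuinely non-zero rather than an accidental zero. This is handled by the observation that every coefficient appearing in the formula of Corollary~\ref{bnumbersforlin} is a non-negative integer (the $C_j$ are counts of clusters, and $md-\sum_{j\ge2}(j-1)C_j\ge 0$ because each generator $\frac{f_\ell}{x_i}y_\ell$ of the last part contributes a colon ideal and $m d$ counts these while $\sum_{j\ge2}(j-1)C_j$ subtracts off exactly the clusters of size $\ge2$, never more than what is available), so a sum of such terms cannot vanish unless every term vanishes. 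At $i=n-d+N$ the term $C_N\binom{n-d+N}{i}=C_N\ne 0$ survives, which settles it. Everything else is the routine bookkeeping of matching binomial supports, which I would not spell out in detail.
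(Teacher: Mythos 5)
Your proof is correct and takes essentially the same route as the paper: identify the top nonvanishing index of the Betti number formula, then apply Auslander--Buchsbaum. The paper works directly with the colon-ideal sizes $r_k$ via $\beta_i=\sum_k\binom{r_k}{i}$ from Corollary~\ref{solitocorollariork}, where the top term $\binom{n-d+N}{n-d+N}=1$ makes non-cancellation immediate; you instead unpack the grouped closed formula of Corollary~\ref{bnumbersforlin}, which is equivalent but requires the sanity check you supply. One small slip to watch: your claim that at $i=n-d+N$ ``only the last summand can contribute'' fails in the degenerate case $N=1$ (no maximal $j$-cluster with $j\ge2$), where the last summand is empty and it is the middle term $\binom{n-d+1}{n-d+1}\cdot md=md>0$ that forces $\beta_{n-d+1}\ne0$; the conclusion $\pd_R(\LIN(I))=n-d+1$ of course still holds.
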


\begin{proof}
The largest $r_k$ occurring in the sum is $n-d+N$, so the last non-zero  binomial coefficient
$$\binom{r_k}{i}=\binom{n-d+N}{i}$$ corresponds to $i=n-d+N$. So  we get the desired formula we wanted for the projective dimension. As for the depth, we just use the Auslander--Buchsbaum formula (see Formula~15.3 of~\cite{Pe}):
\begin{align*}
\depth(R/\LIN(I))&=\depth(R)-\pd_R(R/\LIN(I))\\
&=n+m-(n-d+N+1).
\end{align*}
\end{proof}



\subsection{Conceptual proof via polarizations}\label{gunnarsproof}

What follows is a more conceptual proof, in the squarefree case, that $\LIN(I)$ has linear resolution, a result which we already have as a special case of Corollary~\ref{Linlinres}, and that the Betti numbers only depend on the multiplicities of the $(d-1)$-edges, as defined in Definition~\ref{edgemultiplicity}. This proof and the notion of separation involved in it were very kindly explained to me by Gunnar Fl\o ystad.

\begin{notation}
For any  subset $\sigma\se[n]:=\{1,\dots,n\}$,  denote $x^\sigma:=\prod_{i\in\sigma} x_i$ and $\mm^\sigma:=(x_i\mid i\in\sigma)$.
\end{notation}

Given a squarefree monomial ideal $I=(x^{\sigma_1},\dots,x^{\sigma_s})\se \K[x_1,\dots,x_n]$, recall that the \tbs{Alexander dual} of $I$ is the ideal
\begin{align*}
I^\vee&:=\mm^{\sigma_1}\cap\dots\cap\mm^{\sigma_s}\\
&\phantom{:}=(\text{monomials with non-trivial common divisor with}\\
&\phantom{:=}\quad \text{every monomial (equivalently, generator) of $I$}).
\end{align*}
For other equivalent descriptions of $I^\vee$ and additional information, see for instance Section~62 of~\cite{Pe} or Section~1.5.2 of~\cite{HH}.

Let $R$ be a commutative ring and let $M$ be an $R$-module. Recall that an element $a\in M$ is \tbs{$M$-regular} if the only $m\in M$ such that $am=0$ is $m=0$. A sequence $a_1,\dots,a_n\in R$ is an \tbs{$M$-regular sequence} if the following hold:
\begin{itemize}
\item $a_i$ is $M/(a_1,\dots,a_{i-1})M$-regular for all $i\in\{1,\dots,n\}$;
\item $M/(a_1,\dots,a_n)M\ne0$.
\end{itemize}

%

The combination of the  following two theorems, which are very well known, constitutes the main tool for the proof presented in this section. For the notion of Cohen--Macaulay ring, see Definition\til2.1.1 of~\cite{BH93}. For a proof of the first theorem, see for instance Theorem~2.1.3 of~\cite{BH93}. For the second theorem see for instance Corollary~62.9 of~\cite{Pe} or the original version, which is Theorem~3 of~\cite{EaRe}.

\begin{theorem}\label{CMquotientCM}
Let $R$ be a Notherian ring and $a_1,\dots,a_n$ be a regular sequence in $R$. If $R$ is a Cohen--Macaulay ring, then $R/(a_1,\dots,a_n)$ is a Cohen--Macaualy ring.
\end{theorem}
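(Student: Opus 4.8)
The plan is to reduce the statement by induction on $n$ to the case $n=1$ and then to argue at each maximal ideal. If $a_1,\dots,a_n$ is an $R$-regular sequence, then
$$R/(a_1,\dots,a_n)\ \cong\ \bigl(R/a_1R\bigr)\big/(\bar a_2,\dots,\bar a_n),$$
and the images $\bar a_2,\dots,\bar a_n$ form a regular sequence on $R/a_1R$; so it suffices to prove that if $R$ is Cohen--Macaulay and $a\in R$ is a nonzerodivisor, then $R/aR$ is Cohen--Macaulay. Being Cohen--Macaulay is checked at maximal ideals, and the maximal ideals of $R/aR$ are exactly the images of the maximal ideals $\mathfrak m$ of $R$ containing $a$, with $(R/aR)_{\mathfrak m}\cong R_{\mathfrak m}/aR_{\mathfrak m}$; moreover $a/1$ remains a nonzerodivisor in $R_{\mathfrak m}$ and lies in $\mathfrak m R_{\mathfrak m}$. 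Hence I may assume $(R,\mathfrak m)$ is a Cohen--Macaulay local ring and $a\in\mathfrak m$ is a nonzerodivisor, and the task is to show $\depth(R/aR)=\dim(R/aR)$.

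First I would compute the depth. Since $a$ lies in $\mathfrak m$ and is a nonzerodivisor on $R$, the standard depth lemma gives $\depth(R/aR)=\depth(R)-1$. Next the dimension: Krull's principal ideal theorem gives $\dim(R/aR)\ge\dim R-1$, and conversely, since $a$ is a nonzerodivisor it lies in no associated prime of $R$, hence in no minimal prime of $R$ (every minimal prime is associated), so every prime minimal over $(a)$ has height exactly $1$, whence $\dim(R/aR)\le\dim R-1$. Therefore $\dim(R/aR)=\dim R-1$. Combining with the Cohen--Macaulay hypothesis $\depth R=\dim R$ yields
$$\depth(R/aR)=\depth R-1=\dim R-1=\dim(R/aR),$$
and $R/aR\neq0$ because $a\in\mathfrak m$ is not a unit, so $R/aR$ is Cohen--Macaulay. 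Unwinding the localization and the induction completes the proof; this is Theorem~2.1.3 of~\cite{BH93}.

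Since the argument is classical, there is no real obstacle, but the step carrying the most content is the depth lemma $\depth(R/aR)=\depth(R)-1$: showing that depth drops by \emph{exactly} one rests on the homological (Ext- or Koszul-) description of depth, not on an elementary manipulation. The dimension equality, by contrast, is a formal consequence of Krull's principal ideal theorem together with the fact that a nonzerodivisor avoids all minimal primes; the one point worth flagging there is that one must pass to the local case first, because the inequality $\dim(R/aR)\ge\dim R-1$ can fail globally (for instance if $a$ becomes a unit on some connected component), which is why the reduction to a local ring is the very first move.
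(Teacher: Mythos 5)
The paper does not give its own proof of this result; it simply cites Theorem~2.1.3 of~\cite{BH93}. Your argument is correct and is, in substance, exactly the proof found in that reference: induct on $n$, reduce via the correspondence of primes to the local case $(R,\mathfrak m)$ with a single nonzerodivisor $a\in\mathfrak m$, then combine $\depth(R/aR)=\depth R-1$ with $\dim(R/aR)=\dim R-1$ (the latter from Krull's principal ideal theorem plus the fact that a nonzerodivisor avoids all minimal primes). Your closing remark rightly identifies the depth-drop lemma as the step with real content, and your caution that $\dim(R/aR)\ge\dim R-1$ can fail for non-local $R$ is accurate, though somewhat beside the point: one never needs the global dimension comparison, since Cohen--Macaulayness of a non-local Noetherian ring is by definition a local condition at each maximal (equivalently prime) ideal, so the reduction to the local case is forced from the outset rather than chosen to rescue the inequality.
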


\begin{theorem}[Eagon--Reiner,~\cite{EaRe}]\label{eigonreiner}
For a squarefree monomial ideal $I\se S$, the following are equivalent:
\begin{itemize}
\item $I$ has linear resolution (see Definition~\ref{deflinearreso});
\item $S/I^\vee$ is Cohen--Macaulay.
\end{itemize}
\end{theorem}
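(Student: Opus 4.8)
The plan is to run everything through the Stanley--Reisner dictionary and then match two homological criteria via combinatorial Alexander duality. Write $I = I_\Delta$ for a simplicial complex $\Delta$ on $[n]$, so that $I^\vee = I_{\Delta^\vee}$, where $\Delta^\vee = \{\tau \se [n] : [n]\setminus\tau \notin \Delta\}$ is the Alexander dual complex. Since ``$I$ has a linear resolution'' (as in Definition~\ref{deflinearreso}) forces all generators of $I$ to have one common degree $d$ (and symmetrically, $S/I^\vee$ Cohen--Macaulay forces $\Delta^\vee$ pure, hence also forces $I$ to be generated in a single degree), the ideal $I^\vee = \bigcap_j \mm^{\sigma_j}$ is an intersection of primes of height $d$; thus $\dim(S/I^\vee) = n-d$, and it is this number that the Cohen--Macaulay condition asks $\depth$ to reach. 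Moreover $I$ generated in degree $d$ means $\Delta$ contains the full $(d-2)$-skeleton of the simplex on $[n]$.

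First I would restate both sides cohomologically. By Hochster's formula, $\beta_{k,W}(S/I_\Delta) = \dim_\K \widetilde{H}^{\,|W|-k-1}(\Delta|_W;\K)$ for $W \se [n]$, and combining this with $\beta_{i,j}(I_\Delta) = \beta_{i+1,j}(S/I_\Delta)$, one sees that $I_\Delta$ has a $d$-linear resolution iff $\widetilde{H}^{\,\ell}(\Delta|_W;\K) = 0$ for all $W$ and all $\ell \neq d-2$ in the range coming from $k\ge 1$. Here the single-degree hypothesis enters: because $\Delta$ contains the full $(d-2)$-skeleton, $\widetilde{H}^{\,\ell}(\Delta|_W;\K) = 0$ automatically for $\ell \le d-3$, so the linear-resolution condition collapses to ``$\widetilde{H}^{\,\ell}(\Delta|_W;\K) = 0$ for all $W$ and all $\ell \ge d-1$''. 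On the other side, Reisner's criterion says $S/I_{\Delta^\vee}$ is Cohen--Macaulay iff $\widetilde{H}_j(\mathrm{link}_{\Delta^\vee}\sigma;\K) = 0$ for every face $\sigma$ of $\Delta^\vee$ and every $j < \dim \mathrm{link}_{\Delta^\vee}\sigma$.

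Next I would bridge the two via Alexander duality. The key elementary identity, obtained by unwinding the two definitions, is $\mathrm{link}_{\Delta^\vee}(\sigma) = (\Delta|_{[n]\setminus\sigma})^\vee$, the Alexander dual taken inside the ground set $[n]\setminus\sigma$. Combining this with the combinatorial duality isomorphism $\widetilde{H}_j(\Gamma^\vee;\K) \cong \widetilde{H}^{\,|V|-j-3}(\Gamma;\K)$ for a complex $\Gamma$ on vertex set $V$, the group $\widetilde{H}_j(\mathrm{link}_{\Delta^\vee}\sigma)$ becomes $\widetilde{H}^{\,(n-|\sigma|)-j-3}(\Delta|_{[n]\setminus\sigma})$. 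Then I would substitute $\dim\mathrm{link}_{\Delta^\vee}\sigma = n-d-1-|\sigma|$ (from $\dim\Delta^\vee = n-d-1$), re-index via $\ell := (n-|\sigma|)-j-3$, and check that Reisner's range $j < \dim\mathrm{link}_{\Delta^\vee}\sigma$ translates precisely to $\ell \ge d-1$, while the faces $\sigma$ of $\Delta^\vee$ correspond exactly to the subsets $W = [n]\setminus\sigma \notin \Delta$ (and for $W \in \Delta$ the restriction $\Delta|_W$ is a simplex with no reduced cohomology, so the condition there is vacuous). This identifies Reisner's criterion with the collapsed linear-resolution condition of the previous paragraph, proving the equivalence.

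The step I expect to be the main obstacle is the index bookkeeping: aligning the shift $|W|-k-1$ in Hochster's formula with the shift $|V|-j-3$ in Alexander duality and with the strict inequality ``$j < \dim\mathrm{link}$'' in Reisner's criterion, all while keeping track of how the single-degree hypothesis simultaneously pins down $\dim(S/I^\vee) = n-d$ and kills the low-degree reduced cohomology on both sides. A shorter but less self-contained route would invoke Terai's formula $\pd_S(S/I^\vee) = \reg_S(I)$: since $\codim I^\vee = d$, the Auslander--Buchsbaum formula shows $S/I^\vee$ is Cohen--Macaulay iff $\pd_S(S/I^\vee) = d$, hence iff $\reg_S(I) = d$, which for an ideal generated in degree $d$ is exactly the statement that $I$ has a linear resolution; but this merely relocates the same combinatorics into the proof of Terai's formula.
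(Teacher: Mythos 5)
The paper does not prove this theorem; it is stated as a citation, with the reader referred to Corollary~62.9 of~\cite{Pe} or Theorem~3 of the original~\cite{EaRe}. So there is no in-paper argument to compare against, and your proposal must be judged on its own.

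Your argument is correct and is essentially the standard textbook proof of Eagon--Reiner. The three ingredients you assemble---Hochster's formula for $\beta_{k,W}(S/I_\Delta)$ in terms of $\widetilde{H}^{|W|-k-1}(\Delta|_W)$, Reisner's criterion for Cohen--Macaulayness of $\K[\Delta^\vee]$ over a field, and combinatorial Alexander duality together with the identity $\mathrm{link}_{\Delta^\vee}(\sigma)=(\Delta|_{[n]\setminus\sigma})^\vee$---are exactly the right ones, and your index bookkeeping checks out: $j<\dim\mathrm{link}_{\Delta^\vee}\sigma=n-d-1-|\sigma|$ transforms under $\ell=(n-|\sigma|)-j-3$ to $\ell\ge d-1$, and the faces $\sigma\in\Delta^\vee$ correspond precisely to the $W=[n]\setminus\sigma\notin\Delta$, while for $W\in\Delta$ the restriction $\Delta|_W$ is a full simplex with no reduced cohomology, so Hochster's condition there is vacuous. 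You are also right that the single-degree hypothesis is forced from either side (linear resolution forces equigeneration; Cohen--Macaulay forces $\Delta^\vee$ pure), which is what makes $\dim(S/I^\vee)=n-d$ and kills the low-degree cohomology via the $(d-2)$-skeleton. The alternative route you sketch---Terai's formula $\pd_S(S/I^\vee)=\reg_S(I)$ combined with Auslander--Buchsbaum and $\altezza(I^\vee)=d$---is also valid and is the proof typically presented in~\cite{HH}; as you say, it is shorter on the surface but only because the Hochster/Reisner/duality bookkeeping is moved into the proof of Terai's theorem.

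One small point worth making explicit if you were to write this up in full: the very low end of the index range ($j=-1$ in Reisner, equivalently $\ell=|W|-2$ in your reformulation, equivalently $k=1$ in Hochster) requires a moment's care about conventions for $\widetilde{H}_{-1}$ and the void versus irrelevant complex; you flag the bookkeeping as the main obstacle, and this boundary case is exactly where the conventions matter.
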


The very classical notion of \emph{polarization} has been often used in commutative algebra and related fields, in particular to reduce the study of homological properties of any monomial ideal to the case of squarefree monomial ideals. It was originally used by Hartshorne in his proof of the connectedness of the Hilbert scheme, see Chapter~4 of Hartshorne's paper~\cite{Hart}. See Section~21 of~\cite{Pe} for additional information. Later it became a standard tool in commutative algebra thanks to the work of Hochster. The notion in the next definition, fundamental for the proof presented in this section, is a generalization of the classical polarization. It probably first appeared in~\cite{Yana} and  a systematic study of it is done in the recent paper~\cite{AFL} for powers of graded maximal ideals. Gunnar Fl\o ystad, the second author of that paper, showed me how to prove that~$\LIN(I)$ has a linear resolution using this framework.

\begin{definition}
Let $p\:R'\to R$ be a surjection of finite sets with the cardinality of $R'$ one more than that of $R$. Let $r_1$ and $r_2$ be two distinct elements of $R'$ such that $p(r_1)=p(r_2)$. Denote for short $\K[x_R]:=\K[x_i\mid i\in R]$. Let $I$ be a monomial ideal in~$\K[x_R]$ and $J$ a monomial ideal in $\K[x_{R'}]$. We say that $J$ is a \tbs{separation of $I$} if the following conditions hold:
\begin{enumerate}
\item[(1)] The ideal $I$ is the image of $J$ by the map $\K[x_{R'}]\to \K[x_R]$ induced by $p$.
\item[(2)] Both the variables $x_{r_1}$ and $x_{r_2}$ occur in some minimal generators of $J$ (usually in distinct generators).
\item[(3)] The variable difference $x_{r_1}-x_{r_2}$ is a non-zero-divisor in the quotient ring $\K[x_{R'}]/J$.
\end{enumerate}
More generally, if $p\:R'\to R$ is a surjection of finite sets and $I\se \K[x_R]$ and $J\se\K[x_{R'}]$ are monomial ideals such that $J$ is obtained by a succession of separations of $I$, we also call $J$ a \tbs{separation of $I$}. If $J$ is squarefree and a separation of $I$, then we say that $J$ is a \tbs{polarization of} $I$.
\end{definition}

\subsubsection{Preliminary constructions}

Consider the following ideal, generated by all squarefree monomials of degree~$d$ inside the polynomial ring $\K[x_0,x_1,\dots,x_n]$, with $d< n$:
$$J:=(x_0,x_1,\dots,x_n)^d_{\sqf}.$$
The ideal $J$ is the \emph{squarefree (Veronese) $n$-th power} of $(x_0,\dots,x_n)$, and it is a well-known fact that $J$ has linear resolution and is Cohen--Macaulay. Starting from this ideal $J$, we will perform some separations and take  Alexander duals, and eventually get to~$\LIN(I)$. (This might perhaps be surprising, since  $J$ looks ``more symmetric'' than $\LIN(I)$: indeed, quoting the authors of~\cite{AFL}, \emph{``a polarization is somehow a way of breaking this symmetry,
but still keeping the homological properties''}.) We may write
$$J=(x_1,\dots,x_n)^d_{\sqf}+x_0(x_1,\dots,x_n)^{d-1}_{\sqf}.$$
The ideal $J$ can be separated to
\begin{align*}
H&:=(x_1,\dots,x_n)^d_{\sqf}\\
&\ \quad+\big(y_\ii\ x^\ii\mid\ii=\{i_1,\dots,i_{d-1}\},1\le i_1<\dots<i_{d-1}\le n\big),
\end{align*}
where $x^\ii=x_{i_1}\cdots x_{i_{d-1}}$,
in the polynomial ring $\K\big[x_1,\dots,x_n,y_\ii\mid\ii\in\binom{[n]}{d-1}\big]$ where the new variables $y_\ii$'s are indexed on all the $(d-1)$-subsets of $[n]=\{1,\dots,n\}$. We denote the ring $\K[x_1,\dots,x_n,y_\ii\text{'s}]$ for short. Order all these $(d-1)$-subsets as $\ii_1,\ii_2,\dots,\ii_{\binom n{d-1}}$.
Then $\K[x_0,x_1,\dots,x_n]/J$ is obtained from $\K[x_1,\dots,x_n,y_\ii\text{'s}]/H$ by dividing out by the variable differences 
\begin{equation}\label{vardiffconst}
y_{\ii_1}-y_{\ii_2},\ 
y_{\ii_2}-y_{\ii_3},\ \dots,\ 
y_{\ii_{\binom n{d-1}-1}}-y_{\ii_{\binom n{d-1}}},
\end{equation}
so that all $y_\ii$'s are identified to a single variable~$x_0$. 

\begin{lemma}\label{regsequno}
The variables differences in~\eqref{vardiffconst} form a $\K[x_1,\dots,x_n,y_\ii\text{'s}]/H$-regular sequence.
\end{lemma}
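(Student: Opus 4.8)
The statement is that the variable differences $y_{\ii_1}-y_{\ii_2}, y_{\ii_2}-y_{\ii_3},\dots,y_{\ii_{\binom n{d-1}-1}}-y_{\ii_{\binom n{d-1}}}$ form a regular sequence on $\K[x_1,\dots,x_n,y_\ii\text{'s}]/H$. The cleanest route is to combine two facts already available in the excerpt: first, $H$ is a separation (in fact obtained by successive separations) of $J=(x_0,\dots,x_n)^d_{\sqf}$, which is Cohen--Macaulay; and second, each individual variable difference is, by definition of separation, a non-zero-divisor at the stage where it is introduced. So I would argue that the quotient of $\K[x_1,\dots,x_n,y_\ii\text{'s}]/H$ by \emph{all} these differences at once is exactly $\K[x_0,\dots,x_n]/J$, and then use a dimension count to upgrade ``each difference is a non-zero-divisor at its own stage'' into ``the whole list is a regular sequence.''

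**Key steps, in order.** First I would record that $\dim \K[x_1,\dots,x_n,y_\ii\text{'s}]/H = \dim\K[x_0,\dots,x_n]/J + (\binom n{d-1}-1)$. This is because $H$ is obtained from $J$ by $\binom n{d-1}-1$ successive separations, and each separation adds exactly one variable while (being a separation) keeping the quotient ring's dimension up by exactly one — concretely, the separating variable difference $x_{r_1}-x_{r_2}$ is a non-zero-divisor, so passing to the quotient by it drops dimension by exactly one and recovers the previous ring. Unwinding the chain of separations, $\K[x_1,\dots,x_n,y_\ii\text{'s}]/(H + (\text{the }\binom n{d-1}-1\text{ differences})) \cong \K[x_0,\dots,x_n]/J$. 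Second, I would invoke the standard fact that a sequence of elements $a_1,\dots,a_t$ in a Noetherian ring $A$, all contained in the Jacobson radical or in the graded-maximal ideal, is a regular sequence if and only if $\dim A/(a_1,\dots,a_t) = \dim A - t$, \emph{provided} $A$ is Cohen--Macaulay (for Cohen--Macaulay local or graded rings, ``regular sequence'' $\Leftrightarrow$ ``part of a system of parameters'' $\Leftrightarrow$ the dimension drops by the expected amount). Here $t = \binom n{d-1}-1$, $A = \K[x_1,\dots,x_n,y_\ii\text{'s}]/H$, and $A/(a_1,\dots,a_t) = \K[x_0,\dots,x_n]/J$, whose dimension is $\dim A - t$ by the first step. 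Third, I must check that $A$ is Cohen--Macaulay: since $J = (x_0,\dots,x_n)^d_{\sqf}$ is Cohen--Macaulay and separations preserve Cohen--Macaulayness (going \emph{up} the separation tower, each step adjoins one variable and the separating difference is a non-zero-divisor on the bigger ring, so by Theorem~\ref{CMquotientCM} applied in reverse, the bigger ring is Cohen--Macaulay iff the smaller one is), $\K[x_1,\dots,x_n,y_\ii\text{'s}]/H$ is Cohen--Macaulay. With $A$ Cohen--Macaulay and the dimension count in hand, the differences form a regular sequence.

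**Main obstacle.** The delicate point is the bookkeeping in the first step: I need that the $\binom n{d-1}-1$ variable differences in~\eqref{vardiffconst} are \emph{precisely} the separating differences produced along the chain of separations from $J$ to $H$, taken in the right order, so that the iterated quotient collapses $H$ back to $J$ and each intermediate quotient is one of the intermediate ideals in the separation tower. One has to exhibit the tower explicitly — separate off one $y_\ii$ at a time, starting from $x_0(x_1,\dots,x_n)^{d-1}_{\sqf}$, replacing occurrences of $x_0$ attached to a given $(d-1)$-set $\ii$ by a fresh variable $y_\ii$ — and verify at each stage that the relevant difference is a non-zero-divisor (i.e., that the construction genuinely is a separation in the technical sense, conditions (1)--(3)). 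Once that tower is laid out, the regularity of the full sequence is forced by the dimension/Cohen--Macaulay argument and needs no further computation.
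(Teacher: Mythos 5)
Your plan contains a redundancy that, once noticed, collapses the whole Cohen--Macaulay/dimension superstructure. In your ``Main obstacle'' paragraph you acknowledge that one must lay out the separation tower and ``verify at each stage that the relevant difference is a non-zero-divisor.'' But observe: if, going down from $\K[x_1,\dots,x_n,y_\ii\text{'s}]/H$ to $\K[x_0,\dots,x_n]/J$, you have verified at every level that the next variable difference is a non-zero-divisor on the current quotient, you have verified \emph{precisely the definition} of a regular sequence (the final quotient $\K[x_0,\dots,x_n]/J$ being nonzero is obvious). Nothing further is ``forced'' by a dimension count; the claim is already proved. The dimension/Cohen--Macaulay machinery is designed to \emph{avoid} checking non-zero-divisor-ness term by term, but your argument does not avoid it --- it relies on it and then re-derives it.

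Worse, the way you propose to establish Cohen--Macaulayness of $\K[x_1,\dots,x_n,y_\ii\text{'s}]/H$ is circular. You want to propagate the Cohen--Macaulay property of $\K[x_0,\dots,x_n]/J$ up the separation tower using the non-zero-divisor property at each step, and you want to get the dimension increment $+1$ at each step for the same reason. But that non-zero-divisor property is exactly what Lemma~\ref{regsequno} is asserting; the paper states that ``$J$ can be separated to $H$'' as motivation, and the lemma is the place where this is actually justified. There is no independent route to $\K[x_1,\dots,x_n,y_\ii\text{'s}]/H$ being Cohen--Macaulay on offer here (Eagon--Reiner would control $\K[\dots]/H^\vee$, not $\K[\dots]/H$), so your steps~(a) and~(b) cannot be made to stand on their own. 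The paper's proof does what your ``Main obstacle'' describes, directly and elementarily: since $H$ is a squarefree monomial ideal, one checks that $u(y_{\ii_1}-y_{\ii_2})\in H$ for a monomial $u$ forces $uy_{\ii_1}\in H$ and $uy_{\ii_2}\in H$, and a short case analysis on the generators of $H$ that do the dividing shows $u\in H$; one then iterates after identifying the $y$-variables already quotiented out. That direct verification \emph{is} the lemma, and your extra scaffolding should be removed.
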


\begin{proof}
We start by showing that $y_{\ii_1}-y_{\ii_2}$ is a regular element in the ring $\K[x_1,\dots,x_n,y_\ii\text{'s}]/H$. Equivalently, if some polynomial of 
$\K[x_1,\dots,x_n,y_\ii\text{'s}]$ multiplies  $y_{\ii_1}-y_{\ii_2}$ in $H$, then we want to show that this polynomial belongs to $H$. In fact, since $H$ is a squarefree monomial ideal, it's enough to show that if we have $u(y_{\ii_1}-y_{\ii_2})\in H$ for a monomial $u$, then $u\in H$. Since $H$ is monomial, $u(y_{\ii_1}-y_{\ii_2})\in H$ implies that $uy_{\ii_1}\in H$ and $uy_{\ii_2}\in H$, so that for some generators $g_1,g_2\in G(H)$ we have that $g_1$ divides $uy_{\ii_1}$ and $g_2$ divides $uy_{\ii_2}$. If $g_1$ or $g_2$ divides $u$, then we're done. If not, then it means that $g_1=y_{\ii_1}x^{\ii_1}$ and $g_2=y_{\ii_2}x^{\ii_2}$. So then $\lcm(x^{\ii_1},x^{\ii_2})$ is a monomial in at least $d$~ variables, therefore in $H$, dividing $u$, and then we are done.

To continue the proof, we notice that after taking the quotient by some differences $y_{\ii_1}-y_{\ii_2},\ 
y_{\ii_2}-y_{\ii_3},\ \dots,\ 
y_{\ii_{j-1}}-y_{\ii_j}$ we get a polynomial ring in less variables and an ideal $\cl H$ whose generators are the same as $H$, except for identifications of some $y$-variables. Those variables are not involved in proving that $y_{\ii_j}-y_{\ii_{j+1}}$ is a regular element in the new ring, hence we can simply iterate the argument above.
\end{proof}

Hence $H$ has a linear resolution, too. The Alexander dual of $H$ is 
$$H^\vee=(x_1,\dots,x_n)^{n-d+2}_{\sqf}+\Big(\frac{x_1\cdots x_n}{x^\ii}y_\ii\mid\ii\in\binom{[n]}{d-1}\Big).$$
By Theorem~\ref{eigonreiner}, the quotient $\K[x_1,\dots,x_n,y_\ii\text{'s}]/H^\vee$ is a Cohen--Macaulay ring, since $H$ has linear resolution. Next, we will take a further quotient of the ring $\K[x_1,\dots,x_n,y_\ii\text{'s}]/H^\vee$ by a regular sequence, thus preserving Cohen--Macaulayness. 

\begin{lemma}
The differences
$$y_{\ii_1}-1,\ y_{\ii_2}-1,\dots,\ y_{\ii_{\binom n{d-1}}}-1$$
form a $\K[x_1,\dots,x_n,y_\ii\text{'s}]/H^\vee$-regular sequence.
\end{lemma}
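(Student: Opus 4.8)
The statement is exactly parallel to Lemma~\ref{regsequno}, but now with the constant $1$ in place of the ``next'' $y$-variable, and working over the Alexander dual $H^\vee$ instead of $H$. Since $H^\vee$ is a squarefree monomial ideal, the key point is again that quotienting by a difference of the form $y_{\ii}-1$ is harmless. The plan is to show inductively that $y_{\ii_t}-1$ is a non-zero-divisor on $\K[x_1,\dots,x_n,y_\ii\text{'s}]/(H^\vee,y_{\ii_1}-1,\dots,y_{\ii_{t-1}}-1)$, and that the final quotient is non-zero.

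First I would reduce to the single step $t=1$: after setting $y_{\ii_1}=\dots=y_{\ii_{t-1}}=1$, the ideal $H^\vee$ becomes an ideal $\overline{H^\vee}$ whose generators are those of $H^\vee$ with those $y$-variables erased (i.e.\ replaced by $1$); the remaining variables $y_{\ii_t},\dots$ are untouched, so the argument iterates verbatim, just as at the end of the proof of Lemma~\ref{regsequno}. So it suffices to prove $y_{\ii_1}-1$ is regular on $\K[x_1,\dots,x_n,y_\ii\text{'s}]/H^\vee$, and that this quotient is non-zero (the latter is immediate: $1\notin H^\vee$ since $H^\vee$ is a proper monomial ideal, so $\K[x_1,\dots,x_n,y_\ii\text{'s}]/(H^\vee,y_{\ii_1}-1,\dots)\ne 0$ because setting the $y$'s to $1$ just gives $\K[x_1,\dots,x_n]/\overline{H^\vee}$, which is non-zero as $\overline{H^\vee}\subseteq (x_1,\dots,x_n)^{\,n-d+2}_{\sqf}$-type ideal generated in positive degree).

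For regularity, suppose $g\cdot(y_{\ii_1}-1)\in H^\vee$ for some $g\in\K[x_1,\dots,x_n,y_\ii\text{'s}]$; I want $g\in H^\vee$. Here the trick used for variable \emph{differences} ($u y_{\ii_1}\in H$ and $uy_{\ii_2}\in H$) is not available, because $y_{\ii_1}-1$ is not homogeneous in $y_{\ii_1}$. Instead I would expand $g=\sum_{e\ge 0} g_e\, y_{\ii_1}^{\,e}$ with each $g_e$ not involving $y_{\ii_1}$, so that $g(y_{\ii_1}-1)=\sum_e (g_{e-1}-g_e)\,y_{\ii_1}^{\,e}$ (with $g_{-1}:=0$). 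Since $H^\vee$ is generated by monomials, and in fact no minimal generator of $H^\vee$ is divisible by $y_{\ii_1}^2$ (each generator $\tfrac{x_1\cdots x_n}{x^\ii}y_\ii$ is linear in its $y$-variable, and the complete part has no $y$'s at all), membership of a polynomial in $H^\vee$ can be checked coefficientwise in the $y_{\ii_1}$-grading: $h\in H^\vee$ iff, writing $h=\sum_e h_e y_{\ii_1}^e$, we have $h_0\in (H^\vee)_{\setminus y_{\ii_1},0}$ and $h_e y_{\ii_1}^e$ lands in the part of $H^\vee$ linear in $y_{\ii_1}$ for $e\ge 1$. Being careful here, the clean way is: $H^\vee = A + y_{\ii_1}B$ where $A$ is generated by the monomial generators of $H^\vee$ not involving $y_{\ii_1}$ and $B=(\,\tfrac{x_1\cdots x_n}{x^{\ii_1}}\,)$; then a standard ``descending induction on the top $y_{\ii_1}$-degree of $g$'' shows $g(y_{\ii_1}-1)\in A+y_{\ii_1}B$ forces all $g_e\in A+B$, hence $g\in A+B\subseteq H^\vee$ (note $B\subseteq H^\vee$ since $\tfrac{x_1\cdots x_n}{x^{\ii_1}}y_{\ii_1}\in H^\vee$ and also $\tfrac{x_1\cdots x_n}{x^{\ii_1}}$ itself is in the complete part $(x_1,\dots,x_n)^{\,n-d+2}_{\sqf}$ as $\#([n]\setminus\ii_1)=n-d+1$... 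I would double-check this degree count, since $\tfrac{x_1\cdots x_n}{x^{\ii_1}}$ has degree $n-d+1$, \emph{not} $n-d+2$, so it is \emph{not} in the complete part; this is exactly why the separation is non-trivial and $B\not\subseteq A$).

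**Main obstacle.** The delicate point, and the one I expect to need the most care, is precisely the last parenthetical: unlike the variable-difference case of Lemma~\ref{regsequno}, here we genuinely use that $y_{\ii_1}B\subseteq H^\vee$ and must track that the lower-$y_{\ii_1}$-degree pieces $g_e$ for $e\ge 1$ are forced into $B$ (which sits inside $H^\vee$), while $g_0$ is forced into $A$. The cleanest formulation is: $H^\vee\cap \K[x_1,\dots,x_n,y_\ii\text{'s}]$ graded by $y_{\ii_1}$-degree has degree-$0$ piece $A$ and degree-$e$ piece $y_{\ii_1}^e B$ for $e\ge 1$ (as subsets, using $B^{\ge e}=B$ since $B$ is principal and idempotent-free doesn't quite make sense — rather $y_{\ii_1}^e B\subseteq y_{\ii_1} B\subseteq H^\vee$); from $\sum_e(g_{e-1}-g_e)y_{\ii_1}^e\in H^\vee$ one reads off, in top degree $N$, $g_N\in B$; subtracting $g_N y_{\ii_1}^N(y_{\ii_1}-1)$ reduces $N$; iterating gives every $g_e\in B$ for $e\ge 1$ and finally $g_0\in A$. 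Hence $g\in A+B$. Since $A\subseteq H^\vee$ trivially and $B\subseteq H^\vee$ because $y_{\ii_1}\cdot\tfrac{x_1\cdots x_n}{x^{\ii_1}}\in G(H^\vee)$ implies $\tfrac{x_1\cdots x_n}{x^{\ii_1}}\in H^\vee:y_{\ii_1}$ — wait, that gives membership in a colon, not in $H^\vee$ — so I would instead argue directly that $g\in A+B$ together with $g(y_{\ii_1}-1)\in H^\vee$ forces $g\in H^\vee$ by plugging $y_{\ii_1}=1$: the image of $g$ in $\K[x_1,\dots,x_n,y_\ii'\text{'s}]/\overline{H^\vee}$ is $g\cdot(y_{\ii_1}-1)|_{y_{\ii_1}=1}=0$, and combined with the homogeneity analysis this closes the argument. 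I would present this second, cleaner route in the write-up and relegate the $y_{\ii_1}$-degree bookkeeping to a remark.
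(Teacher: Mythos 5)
Your overall strategy is sound — reduce to a single step, then analyze that step via the $y_{\ii_1}$-grading and the decomposition $G(H^\vee) = G(A) \cup \{\tfrac{x_1\cdots x_n}{x^{\ii_1}}y_{\ii_1}\}$ — and you are right to notice that the variable-difference trick from the previous lemma does not transport directly. But the bookkeeping contains concrete errors, and the ``cleaner route'' you retreat to at the end does not close the argument. The $y_{\ii_1}$-degree-$e$ piece of $H^\vee$, for $e\ge 1$, consists of $y_{\ii_1}^e$ times monomials lying in $A+B$, not just in $B$: a monomial $u\,y_{\ii_1}^e$ (with $u$ free of $y_{\ii_1}$) lies in $H^\vee$ either because $u$ is divisible by a generator of $A$, or because $\tfrac{x_1\cdots x_n}{x^{\ii_1}}$ divides $u$. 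Consequently, reading off the top $y_{\ii_1}$-degree $N+1$ of $g(y_{\ii_1}-1)$ gives $g_N\in A+B$, not $g_N\in B$, and the descent yields $g_e\in A+B$ for all $e\ge 1$ and $g_0\in A$. The worry about $B\not\subseteq H^\vee$ (which you correctly flag: $\tfrac{x_1\cdots x_n}{x^{\ii_1}}$ has degree $n-d+1$ and is not in the complete part) is a red herring, because what is actually needed is $g_e\,y_{\ii_1}^e\in H^\vee$ for $e\ge 1$, and that holds since $A\subseteq H^\vee$ and $y_{\ii_1}B\subseteq H^\vee$; together with $g_0\in A\subseteq H^\vee$ this gives $g\in H^\vee$. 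You nearly state the key point $y_{\ii_1}^eB\subseteq y_{\ii_1}B\subseteq H^\vee$ in a parenthesis, but then abandon it. The final substitution $y_{\ii_1}=1$ is of no help: it annihilates $g\cdot(y_{\ii_1}-1)$ for trivial reasons and gives no information about $g$ modulo $H^\vee$.

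By contrast, the paper's proof is one line: it reduces (as in the previous lemma) to a monomial $u$, observes that $u(y_{\ii_1}-1)=uy_{\ii_1}-u\in H^\vee$ forces $u\in H^\vee$ because $H^\vee$ is a monomial ideal, and iterates. The subtlety you sensed — that for $y_{\ii_1}-1$ the monomial condition is essentially vacuous, so the reduction from polynomials to monomials deserves more care than in the variable-difference case — is a fair concern, and your $y_{\ii_1}$-graded analysis (once repaired as above) is a legitimate way to make it airtight. An even shorter rigorous route, should you want one: all associated primes of the proper monomial ideal $H^\vee$ are generated by variables, hence are contained in the irrelevant maximal ideal, which does not contain $y_{\ii_1}-1$; so $y_{\ii_1}-1$ is a nonzerodivisor, and since the quotient is again by a proper monomial ideal the argument iterates.
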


\begin{proof}
Similarly to the case of Lemma~\ref{regsequno}, we start by showing that if we have $u(y_{\ii_1}-1)\in H^\vee$ for some monomial~$u$, then $u\in H^\vee$. This is clear because $H^\vee$ is a monomial ideal and $u(y_{\ii_1}-1)=uy_{\ii_1}-u$. Again, as in the proof of Lemma~\ref{regsequno}, we can iterate this and we get that the variable differences in the statement form a regular sequence.
\end{proof}

Let $U\subseteq\binom{[n]}{d-1}$ be a set of subsets of $[n]$ of cardinality~$d-1$. We quotient out $\K[x_1,\dots,x_n,y_\ii\text{'s}]/H^\vee$ by all the differences $y_\ii-1$ with $\ii\notin U$. These differences form a regular sequence by the previous lemma, so the quotient still is a Cohen--Macaulay ring. 
For each $\ii\in U$, let $d_\ii$ be a positive integer, a multiplicity in the sense of Definition~\ref{edgemultiplicity} for the $(d-1)$-edge $x^\ii$, and replace $y_\ii$ by the product
$$y_{\ii,1}y_{\ii,2}\cdots y_{\ii,d_\ii},$$
where the $y_{\ii,t}$'s are new variables. In this way we get from $H^\vee$ the ideal
$$(x_1,\dots,x_n)^{n-d+2}_{\sqf}
+\Big(\frac{x_1\cdots x_n}{x^\ii}y_{\ii,1}y_{\ii,2}\cdots y_{\ii,d_\ii}
\mid\ii\in U\Big)
+\Big(\frac{x_1\cdots x_n}{x^\ii}
\mid\ii\notin U\Big)$$
and the quotient by this ideal is still Cohen--Macaualy. (By the way, notice that the generators in the third summand---if there are any---, being of degree~$n-d+1$, divide some generators in the first summand, which have degree~$n-d+2$.)
   The Alexander dual of this ideal is
$$K:=(x_1,\dots,x_n)^d_{\sqf}+\Big(x^\ii y_{\ii,t} \mid t=1,\dots,d_\ii,\ii\in\binom{[n]}{d-1}\Big).$$
Then, again by Theorem~\ref{eigonreiner}, the ideal $K$  has a linear resolution.

\subsubsection{End of the proof}

See Section\til\ref{backgroundhypgraphs} for definitions and notations related to hypergraphs.

Take a $d$-uniform hypergraph $H$ on $[n]$, that is, with edge set $E\subset \binom{[n]}d$. Consider the ideal $\LIN(I)$ with $I=I_H\subset \K[x_1,\dots,x_n]$ the edge ideal of\til$H$, so that
$$\LIN(I)=\Big(x^\jj\mid\jj\in \binom{[n]}d\Big)+\Big(\frac{x^\jj}{x_j}y_\jj\mid\jj\in E,j\in\jj\Big).$$
For each $\jj\in E$ we may separate the variable $y_\jj$ and get monomials $\frac{x^\jj}{x_j}y_{\jj,j}$, one for each $j\in\jj$. This gives an ideal $K'$ which is a separation of $\LIN(I)$. For each $\ii$ of cardinality $d-1$ we have monomials $x^\ii\cdot y_{\ii\cup\{j\},j}$ where each $\ii\cup\{j\}$ is in $E$. The ideal $K'$ here constructed identifies with the ideal $K$ above, where $d_\ii$ is the number of sets $\ii\cup\{j\}$ contained in $E$. 
Hence $K'$, and so $\LIN(I)$, has linear resolution, and by the construction above its  Betti numbers only depend on the multiplicities $d_\ii$.


\subsection{Combinatorial interpretations by means of hypergraphs}\label{hyperlin}

We already mentioned the connection between commutative algebra and combinatorics provided by the Stanley--Reisner correspondence.
 Now we change point of view and for the rest of the section we focus on another way of linking combinatorics and commutative algebra: instead of simplicial complexes, we talk about hypergraphs.

\subsubsection{Background on hypergraphs}\label{backgroundhypgraphs}

A \tbs{hypergraph} is a pair $H=(V,\EE)$ with $V$ a finite set and $\EE\se \binom V2\setminus\{\emse\}$ a set of non-empty subsets of $V$. We call $V$ the set of \tbs{vertices} and denote it also by $V(H)$ and we call $\EE$ the set of \tbs{edges} (regardless of the cardinality) and we denote it also by $\EE(H)$. 
We say that a hypergraph $(V,\EE)$ is \tbs{$d$-uniform} if $\EE\se\binom Vd$, that is, if all the edges have the same cardinality $d$. Two vertices $v\ne w$ in $H$ are \tbs{neighbours} if there is an edge $E$ such that $v,w\in E$. For any vertex $v$, the \tbs{neighbourhood} of $v$ is
$$N(v):=\{w\in X\mid w \text{ is a neighbour of $v$}\}.$$

\begin{notation}
If $H=(V,\mathcal E)$ is a hypergraph and $W\se V$ is a subset, then the \tbs{induced hypergraph} on $W$, denoted $H_W$, is the subhypergraph of $H$ whose edge set is $\{E\in\mathcal E\mid E\se W\}$.
\end{notation}

\subsubsection{Hypergraphs with linear resolution}\label{hyplinres}

To a hypergraph $H=(V,\EE)$ with $\#V=n$, we associate a squarefree monomial ideal in $\K[x_1,\dots,x_n]$ called the \tbs{edge ideal} of $H$,
$$I_H:=\Big(\prod_{i\in E}x_i\mid E\in \EE\Big).$$
This provides a bijection between hypergraps with $V=\{1,\dots,n\}$ and squarefree monomial ideals in $\K[x_1,\dots,x_n]$. In their paper~\cite{HV}, Huy Tai H\`a and Adam Van Tuyl study the graded Betti numbers of~$I_H$ and give a characterization, under certain assumptions on the hypergraph~$H$, of the ideals~$I_H$ that have a linear resolution. A special case of this characterization is Fr\"oberg's Theorem~\ref{frofro}. The following definitions and results are taken from Sections~2, 4 and~5 of~\cite{HV}. We skip some assumptions which in our case are automatically satisfied because all our hypergraphs are $d$-uniform.

A \tbs{chain of length $n$} in $H$ is a finite sequence 
$$(E_0,v_1,E_1,v_2,E_2,v_3,\dots,E_{n-1},v_n,E_n)$$ where
\begin{enumerate}
\item[(1)] $v_1,\dots,v_n$ are pairwise distinct vertices of $H$;
\item[(2)] $E_0,\dots,E_n$ are pairwise distinct edges of $H$;
\item[(3)] $v_k\in E_{k-1}\cap E_k$  for all $k\in\{1,\dots,n\}$.
\end{enumerate}
Sometimes such a chain is denoted by $(E_0,\dots,E_n)$. 
 If $E$ and $E'$ are two edges, then $E$ and $E'$ are \tbs{connected} if, for some $n\in\N$, there is a chain $(E_0,\dots,E_n)$ where $E=E_0$ and $E'=E_n$. If $H$ is $d$-uniform, the chain connecting $E$ to $E'$ is a \tbs{proper chain} if $|E_{k-1}\cap E_k|=d-1$ for all $k\in\{1,\dots,n\}$. The (proper) chain is a \tbs{(proper) irredundant chain} of length $n$ if no proper subsequence is a (proper) chain from $E$ to $E'$. We define the \tbs{distance} between $E$ and $E'$ to be
$$\dist_H(E,E'):=\min\{n\mid (E=E_0,\dots,E_n=E) \text{ is a proper irredundant chain}\}.$$
If no proper irredundant chain exists, we set $\dist_H(E,E'):=\infty$.
A $d$-uniform hypergraph $H$ is said to be \tbs{properly-connected} if for any two edges $E$ and $E'$ with the property that $E\cap E'\ne\emse$, one has
$$\dist_H(E,E')=d-|E\cap E'|.$$
A $d$-uniform properly-connected hypergraph $H=(V,\mathcal E)$ is said to be \tbs{triangulated} if for every non-empty subset $W\se V$, the induced subhypergraph~$H_W$ contains a vertex $v\in W$ such that the induced hypergraph of $H_W$ on $N(v)\cup\{v\}$ is the $d$-complete hypergraph of order $|N(v)|+1$. The \tbs{edge diameter} of a $d$-uniform properly-connected hypergraph $H$ is
$$\diam(H):=\max\{\dist_H(E,E')\mid E,E'\in \EE\},$$
where the diameter is infinite if there exist two edges not connected by any proper chain.

Among several results concerning Betti numbers and resolutions, Huy Tai H\`a and Adam Van Tuyl proved in particular the following characterization.

\begin{lemma}[Corollary~7.6 of~\cite{HV}]\label{lemminoHV}
Let $H$ be a $d$-uniform, properly-connected, triangulated hypergraph. Then the following are equivalent:
\begin{itemize}
\item $I_H$ has a linear resolution;
\item $I_H$ as linear first syzygies;
\item $\diam(H)\le d$.
\end{itemize}
\end{lemma}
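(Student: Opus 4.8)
The plan is to prove the cycle of implications $(1)\Rightarrow(2)\Rightarrow(3)\Rightarrow(1)$, where I number the three conditions in the order stated. The implication $(1)\Rightarrow(2)$ is immediate: a $d$-linear resolution forces in particular $\beta_{1,j}(I_H)=0$ for $j\neq d+1$, which is precisely linearity of the first syzygies.

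For $(2)\Rightarrow(3)$ I would argue by contraposition, and here only the \emph{properly-connected} hypothesis is used (we may assume $d\ge2$, the case $d=1$ being the classical setting of Fröberg's Theorem~\ref{frofro}). Suppose $\diam(H)>d$, so there are distinct edges $E,E'$ with $\dist_H(E,E')\ge d+1$ (possibly $\infty$). If $E\cap E'\neq\emse$, proper-connectedness would give $\dist_H(E,E')=d-|E\cap E'|\le d-1$, a contradiction; hence $E\cap E'=\emse$. I then claim the Koszul relation $x^{E'}e_E-x^{E}e_{E'}$ is a \emph{minimal} first syzygy of $I_H$, so that $\beta_{1,2d}(I_H)\neq0$ with $2d\ge d+2$, contradicting $(2)$. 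This syzygy fails to be minimal only if some edge $E''\in\EE(H)$ with $E''\neq E,E'$ satisfies $E''\subseteq E\cup E'$; writing $a:=|E''\cap E|$, one has $1\le a\le d-1$, and proper-connectedness yields $\dist_H(E,E'')=d-a$ and $\dist_H(E'',E')=a$, so concatenating proper chains gives a proper chain from $E$ to $E'$ of length $d$, whence $\dist_H(E,E')\le d$, a contradiction. (That this forces $\beta_{1,2d}(I_H)\neq0$ can be seen from the $\lcm$-lattice formula: the only atoms below $x^{E\cup E'}$ are $x^{E}$ and $x^{E'}$, so the open interval $(\hat0,x^{E\cup E'})$ is a two-point antichain, with $\tilde H_0\neq0$.)

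For $(3)\Rightarrow(1)$ — the substantial implication — I would induct on $|V(H)|$, the base case where $I_H$ is zero or a squarefree Veronese ideal being clear by the facts recalled in Section~\ref{gunnarsproof}. Since $H$ is triangulated, applying the definition with $W=V(H)$ produces a vertex $v$ whose closed neighbourhood $N(v)\cup\{v\}$ induces the $d$-complete hypergraph. Split the edge ideal as $I_H=I_{H\setminus v}+x_vJ$, where $H\setminus v$ is the hypergraph obtained by deleting $v$ and all edges through it, and $J=(x_i\mid i\in N(v))^{d-1}_{\sqf}$ is the squarefree Veronese coming from the link of $v$ (it is the $(d-1)$-complete hypergraph on $N(v)$ because $N(v)\cup\{v\}$ induces the $d$-complete hypergraph); thus $x_vJ$ has a $d$-linear resolution. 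The inductive step needs two ingredients: first, a \emph{hereditary lemma}, that $H\setminus v$ is again $d$-uniform, properly-connected and triangulated with $\diam(H\setminus v)\le d$, so that $I_{H\setminus v}$ has a $d$-linear resolution by induction; second, that the intersection $I_{H\setminus v}\cap x_vJ$ is generated in degree $d+1$ and has a linear resolution — and this is exactly where $\diam(H)\le d$ enters, guaranteeing that any ``detour through $v$'' between two edges of $H\setminus v$ has short length. Feeding these into the mapping cone for $I_H=I_{H\setminus v}+x_vJ$ (which is minimal and linear once all three resolutions are linear and degree-compatible, i.e. once the splitting is a Betti splitting) yields $\beta_{i,j}(I_H)=0$ for $j\neq i+d$.

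I expect the main obstacle to be precisely the two ingredients of this inductive step: checking that deleting a simplicial vertex preserves proper-connectedness \emph{together with} the diameter bound (distances between surviving edges could a priori increase, and one must use that $v$ is simplicial to rule this out), and controlling the resolution of $I_{H\setminus v}\cap x_vJ$ well enough to know it lies entirely in the linear strand. An alternative to the mapping-cone bookkeeping — closer to the machinery of Section~\ref{gunnarsproof} — is to pass through Alexander duality and Theorem~\ref{eigonreiner}: show instead that $S/I_H^\vee$ is Cohen--Macaulay by exhibiting a shelling of its Stanley--Reisner complex $\bigcup_{E\in\EE(H)}2^{V\setminus E}$, with the facets $V\setminus E$ for edges through the simplicial vertex $v$ placed first; but this reorganizes rather than removes the same combinatorial work.
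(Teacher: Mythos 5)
The paper does not prove this lemma: it is quoted directly as Corollary~7.6 of H\`a--Van Tuyl~\cite{HV}, with no argument supplied in the text, so there is no in-paper proof to compare your attempt against. Judged on its own terms, your $(1)\Rightarrow(2)$ is immediate, and your $(2)\Rightarrow(3)$ is correct: proper-connectedness does force $E\cap E'=\emse$ for any pair at distance $\ge d+1$, the concatenation-of-proper-chains argument does exclude a third edge $E''\subseteq E\cup E'$, and the resulting two-element open interval in the lcm-lattice does give $\beta_{1,2d}(I_H)\ne0$. One small repair is needed: the case you set aside, $d=1$, is handled because the lemma holds trivially there (the ideal is generated by variables, $\diam(H)\le1$ automatically), not by Fr\"oberg's Theorem~\ref{frofro}, which is the $d=2$ statement; and your Koszul argument genuinely breaks at $d=1$, since then $2d=d+1$ sits in the linear strand.

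The substantive implication $(3)\Rightarrow(1)$ is, as you yourself flag, only a sketch, and the two deferred ingredients are the entire content of the result. Concretely you must prove: (i) a hereditary lemma that for a simplicial vertex $v$ the induced subhypergraph $H_{V\setminus\{v\}}$ is again $d$-uniform, properly-connected, triangulated, and of edge diameter at most $d$. This is not formal: deleting the edges through $v$ can a priori lengthen the shortest proper irredundant chain between two surviving edges, and ruling that out requires actually using the simpliciality of $v$. You must also prove (ii) that $I_{H\setminus v}\cap x_v J$ is generated in degree $d+1$ and has a linear resolution, which is exactly where $\diam(H)\le d$ has to be spent. Without both, the Betti-splitting/mapping-cone conclusion does not follow, so your draft does not prove the lemma. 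That said, the overall shape of the argument --- locate a simplicial vertex via the triangulated hypothesis, split the edge ideal, induct on the vertex set --- is the correct one for results of this type.
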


\begin{remark}
It's easy to see that the hypergraphs associated to  $\LIN(I)$ and $\Lin(I)$ are properly-connected. They are also triangulated: given a subset of vertices $W$, if $W$ contains no $y$-variables, then pick any $x_i$ as the vertex $v$ in the definition of triangulated hypergraph. If $W$ contains some $y_j$, then pick any of those as~$v$. So then the hypergraphs associated to $\LIN(I)$ and $\Lin(I)$ satisfy the assumptions in Lemma~\ref{lemminoHV}. Therefore, showing that the diameter is at most~$d$ or showing that $\LIN(I)$ and $\Lin(I)$ have linear first syzygies are alternative ways to prove that $\LIN(I)$ and $\Lin(I)$ have a linear resolution.
\end{remark}


\section{Linearization for all monomial ideals}\label{linnonequig}

In the previous sections the linearization construction has been defined only for monomial ideals which are \emph{equigenerated}, that is, generated in a single degree. In order to generalize the construction to arbitrary monomial ideals, we first introduce what we call \emph{equification} of a monomial ideal. This construction takes an arbitrary monomial ideal and returns an equigenerated monomial ideal. It bears some resemblance with the well-known \emph{homogenization} construction---which gives a homogeneous ideal starting from an arbitrary ideal---and this resemblance is made exact in Remark~\ref{equihomosimilarity}.

As a side note, observe that the words ``equification'' and ``equify'' already exist in English, as technical terms in trading and economics. In this paper there is no relation at all to those meanings. The word ``equification'' was suggested to me in analogy to ``sheafification'', which is a well-known process to make a presheaf into a sheaf.

\subsection{Equification of a monomial ideal}\label{omogeneizzmonomials}

Let us assume that $\K$ is a field of characteristic $0$.

\begin{definition}\label{defequification}
Let $I$ be a monomial ideal in $S=\K[x_1,\dots,x_n]$, with minimal system of monomial generators $G(I)=\{f_1,\dots,f_m\}$. Denote $d_j:=\deg(f_j)$ for all $j$ and $d:=\max\{d_j\mid j=1,\dots,m\}$. We define the \tbs{equification of $I$}as
$$I^\eq:=(f_1z^{d-d_1},f_2z^{d-d_2},\dots,f_mz^{d-d_m})$$
in the polynomial ring $S[z]=\K[x_1,\dots,x_n,z]$ with one extra variable $z$.
\end{definition}

\begin{lemma}\label{gensofequif}
The generators of $I^\eq$ in the definition are minimal.
\end{lemma}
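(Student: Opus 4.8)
The plan is to show that the generators $f_1z^{d-d_1},\dots,f_mz^{d-d_m}$ of $I^\eq$ form a minimal set of monomial generators, i.e.\ that none of them divides another. Since each of these monomials is squarefree in no particular sense but is just a product of the original $f_j$ with a power of the new variable $z$, divisibility among them splits into a condition on the $x$-part and a condition on the $z$-part.

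First I would suppose, for contradiction, that $f_iz^{d-d_i}$ divides $f_jz^{d-d_j}$ for some $i\ne j$. Looking only at the variables $x_1,\dots,x_n$, this forces $f_i\mid f_j$ in $S=\K[x_1,\dots,x_n]$. But $f_1,\dots,f_m$ is by hypothesis the \emph{minimal} system of monomial generators $G(I)$ of $I$, so no $f_i$ divides another $f_j$ with $i\ne j$; hence $i=j$, a contradiction. This already gives the claim, and in fact the $z$-part plays no role: the key point is simply that tensoring (multiplying) a minimal generating set by extra monomials in a disjoint set of variables cannot create new divisibility relations among the $x$-parts, so minimality is inherited directly from $G(I)$.

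The only thing to double-check is that all the listed monomials are genuinely distinct and genuinely generate $I^\eq$ as defined, but that is immediate: the $f_j$ are pairwise distinct (being a minimal generating set), so the $f_jz^{d-d_j}$ are pairwise distinct as well, and they are by definition a generating set of $I^\eq$. Therefore they are exactly $G(I^\eq)$.

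I do not expect any real obstacle here; the statement is essentially a bookkeeping observation. The one subtlety worth stating explicitly in the write-up is \emph{why} $f_i z^{d-d_i}\mid f_j z^{d-d_j}$ implies $f_i\mid f_j$: one restricts the exponent vectors to the coordinates corresponding to $x_1,\dots,x_n$ and uses that divisibility of monomials is componentwise comparison of exponent vectors, so divisibility of the full monomials implies divisibility after discarding the $z$-coordinate. With that remark in place the proof is one line.
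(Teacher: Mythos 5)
Your proof is correct and follows essentially the same route as the paper: both reduce to the observation that $f_iz^{d-d_i}\mid f_jz^{d-d_j}$ forces $f_i\mid f_j$ in the $x$-variables. The only small difference is in the closing step: you invoke minimality of $G(I)$ to conclude $i=j$, whereas the paper instead uses that $f_iz^{d-d_i}$ and $f_jz^{d-d_j}$ both have degree $d$, so divisibility forces equality; either observation finishes the argument in one line.
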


\begin{proof}
Assume that $f_iz^{d-d_i}$ divides $f_jz^{d-d_j}$. Then $f_i$ divides $f_j$ and  $d-d_i\le d-d_j$. Since $f_iz^{d-d_i}$ and $f_jz^{d-d_j}$ have the same degree, we have $i=j$.
\end{proof}

\begin{remark}[recovering $I$ from $I^\eq$]\label{fromIeqtoI}
By setting $z=1$, we recover $I$. Of course this requires $z$ to be somehow distinguishable from the rest of the variables. Consider for instance $I=(x^3,y^2)$ in the polynomial ring~$\K[x,y]$. Then $I^\eq=(x^3,y^2z)$ in $\K[x,y,z]$. Clearly $x$ could not possibly be the ``equifying'' variable, but $y$ could be, so actually the same ideal $I^\eq$ could be obtained by equifying the ideal $(x^3,z)\se\K[x,z]$ by introducing a new variable~$y$. For this reason in this section we always assume $z$ to be a well-distinguised variable.
\end{remark}

\begin{remark}\label{equihomosimilarity}
The equification construction is somewhat similar to that of \emph{homogenization} (see for instance Section~3.2.1 of~\cite{HH}): for a polynomial $g\in S=\K[x_1,\dots,x_n]$ we may write uniquely $g=g_0+g_1+\dots+g_d$, where each $g_j$ is homogeneous of degree $j$ and $g_d\ne0$. Then the homogenization of $g$ is the polynomial
$$g^\hom:=\sum_{j=0}^dg_jz^{d-j}\in S[z].$$ The similarity with the equification is made precise by taking the elements in the support: if we have $G(I)=\{f_1,\dots,f_m\},$ then
$$I^\eq=\big(u\mid u\in\Supp\big((f_1+\dots+f_m)^\hom\big)\big).$$Here by \emph{support} of a polynomial we mean the set of its monomials with non-zero coefficient.
\end{remark}


\begin{remark}
Unlike the linearization, which is a functor as observed in Section~\ref{sectionwithfunctor}, the equification is not functorial, that is, it doesn't preserve inclusions. Take for instance, inside $\K[x,y]$, the ideals
$$I=(x^2,xy^3)\quad\se\quad J=(x^2,xy^2,y^5).$$
For their equifications
$$I^\eq=(x^2z^2,xy^3)\quad\text{and}\quad J^\eq=(x^2z^3,xy^2z^2,y^5)$$
we have $I^\eq\not\se J^\eq$ and $J^\eq\not\se I^\eq$.
\end{remark}

\begin{remark}
Recall that a monomial ideal is prime if and only if it is generated by a bunch of variables. And also recall that a monomial ideal is radical if and only if it is squarefree.
Then we have the following two equivalences:
$$\text{$I^\eq$ is prime}\quad\Leftrightarrow\quad\text{$I$ is prime}$$
(in which case $I^\eq=I$) and, perhaps more interestingly,
\begin{align*}
\text{$I^\eq$ is radical}\quad&\Leftrightarrow\quad \text{$I$ is radical and generated in}\\
&\qquad\quad\!\!\text{at most two adjacent degrees.}
\end{align*}
They are both clear. The second equivalence just means this: the original ideal can either  be generated in a single degree, and in this case $I^\eq=I$, or it can have generators of two distinct degrees, but they have to be in adjacent degrees so that $z$ appears with at most exponent~$1$ in the generators of $I^\eq$.
\end{remark}

\begin{remark}\label{eqintred}
A way to illustrate pictorially what happens with $I^\eq$ is as follows. This works for $n=2$. Think of the monomials in $\K[x,y]$ as lattice points in the plane with axes $x$ and $y$. For all $d$, consider the line $x+y=d$, which goes through all monomials of degree $d$ in $x$ and $y$. Then, what $(\cdot)^\eq$ does is that we add a new axis $z$ that comes out of the plane, we take the generators of $I$ of degree $d'$ (which are the ones lying on the line $x+y=d'$) and we bring them up to level $d-d'$. So, in particular, the ones of degree $d$ stay on the plane. See Figure~\ref{equifdrawing} for an example of this.
\begin{figure}
\begin{center}
\begin{tikzpicture} [>=latex,scale=.7]
\fill (.5,1) circle (0.12);
\coordinate [label= left: $x^3$] (x1) at (.5,1);
\fill (6,2) circle (0.12);
\coordinate [label=above : $y^4$] (x1) at (6,2);
\fill (2.5,2.5) circle (0.12);
\coordinate [label=below : $xy$] (x1) at (2.5,2.5);
\draw [thick,->] (2,4) -- (-.5,-1);
\coordinate [label=right: $x$] (x1) at (-.5,-1);
\draw [thick,->] (2,4) -- (7,1.5);
\coordinate [label=right: $y$] (x1) at (7,1.5);
\draw [thick,->] (2,4) -- (2,9);
\coordinate [label=right: $z$] (x1) at (2,9);
\draw [thick,dashed,gray] (1.5,3)--(2.5,2.5);
\draw [thick,dashed,gray] (3,3.5)--(2.5,2.5);
\draw [thick,dashed,gray] (0,0)--(6,2);
\draw [thick,dashed,gray] (.5,1)--(5,2.5);
\draw [thick,dashed,gray] (1,2)--(4,3);
\end{tikzpicture}\qquad
\begin{tikzpicture} [>=latex,scale=.7]
\fill [gray] (.5,1) circle (0.1);
\fill  (6,2) circle (0.12);
\coordinate [label=above right: $y^4$] (x1) at (6,2);
\fill [gray] (2.5,2.5) circle (0.1);
\fill  (.5,2) circle (0.12);
\coordinate [label=above left: $x^3z$] (x1) at (.5,2);
\fill  (2.5,4.5) circle (0.12);
\coordinate [label=below right: $xyz^2$] (x1) at (2.5,4.7);
\draw [thick,dashed,gray] (2.5,4.5)--(2.5,2.5);
\draw [thick,dashed,gray] (.5,2)--(.5,1);
\draw [thick,->] (2,4) -- (-.5,-1);
\coordinate [label=right: $x$] (x1) at (-.5,-1);
\draw [thick,->] (2,4) -- (7,1.5);
\coordinate [label=right: $y$] (x1) at (7,1.5);
\draw [thick,->] (2,4) -- (2,9);
\coordinate [label=right: $z$] (x1) at (2,9);
\draw [thick,dashed,gray] (1.5,3)--(2.5,2.5);
\draw [thick,dashed,gray] (3,3.5)--(2.5,2.5);
\draw [thick,dashed,gray, fill=green, opacity=0.22] (0,0)--(6,2)--(2,8)--(0,0);
\end{tikzpicture}
\caption{We draw $I=(x^3,xy,y^4)$ on the left and $I^\eq=(x^3z,xyz^2,y^4)$ on the right. With a slight abuse of notation, the generator $x^3$ lies on the line $x+y=3$, $xy$ on the line $x+y=2$ and $y^4$ on the line $x+y=4$, all of them on the plane $z=0$. Those three parallel lines are all dashed, in the left picture. The generators of $I^\eq$ all lie on the plane $x+y+z=4$, in green. See Remark~\ref{eqintred}.}
\label{equifdrawing}
\end{center}
\end{figure}
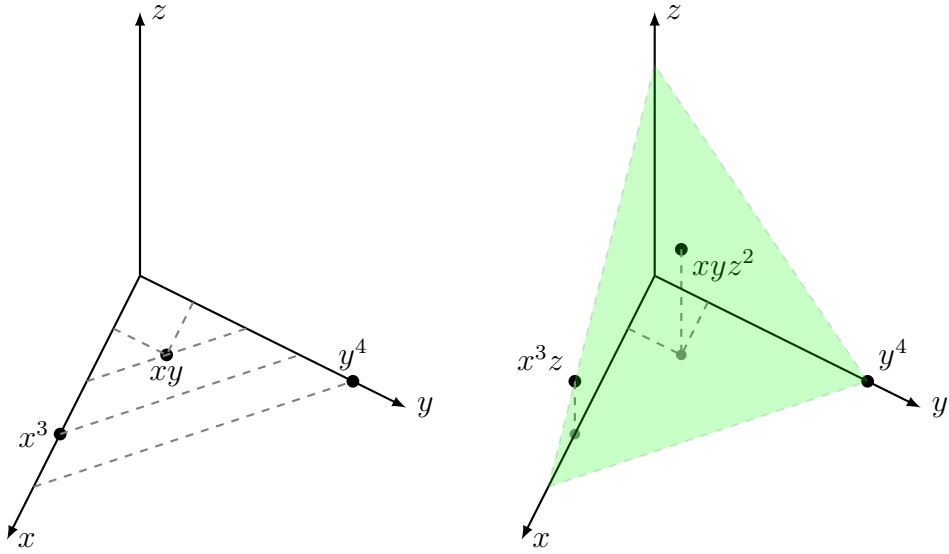
\end{remark}

\subsubsection{Betti numbers of $I^\eq$}

Unfortunately we are not able to give a complete, satisfactory description of the homological invariants of $I^\eq$, but we do provide some partial results: Proposition~\ref{inequalitybnumbs} and Proposition~\ref{redsyzeq}. Before stating the results in the end of the section, we discuss some examples and issues.

\begin{example}\label{exbettinumbsok}
Consider, respectively in $S:=\K[x_1,x_2,x_3]$ and in $T:=S[z]$, the ideals
$$I=(x_1^2,\ x_1x_2^2x_3^2,\ x_2^3x_3^2),\qquad
I^\eq=(x_1^2z^3,\ x_1x_2^2x_3^2,\ x_2^3x_3^2).$$
Then one has
$$\beta^S(I)=\begin{array}{c|cc}
&0&1\\
\hline
2&1&-\\
3&-&-\\
4&-&-\\
5&2&2\\
\hline
&3&2
\end{array}\qquad\text{and}\qquad
\beta^T(I^\eq)=\begin{array}{c|cc}
&0&1\\
\hline
5&3&1\\
6&-&-\\
7&-&-\\
8&-&1\\
\hline
&3&2.
\end{array}$$
and the minimal resolutions look like
$$0\to S^2\xrightarrow{\left(\begin{array}{cc}
0&x_2^2x_3^2\\
x_2&-x_1\\
-x_1&0
\end{array}\right)}
S^3\xrightarrow{(x_1^2,\ x_1x_2^2x_3^2,\ x_2^3x_3^2)}I\to0$$
and
$$0\to T^2\xrightarrow{\left(\begin{array}{cc}
0&x_2^2x_3^2\\
x_2&-x_1z^3\\
-x_1&0
\end{array}\right)}
T^3\xrightarrow{(x_1^2z^3,\ x_1x_2^2x_3^2,\ x_2^3x_3^2)}
I^\eq\to0.$$
Notice that in this example the total Betti numbers of $I$ and $I^\eq$ are equal. Of course the grading cannot be the same, because the very first column in the Betti table records the degrees of the generators, and the generators of $I^\eq$ all have the same degree, which is the highest degree of the minimal generators of $I$. But at least the $0$-th total Betti number of $I$ and that of $I^\eq$ will be the same, due to Lemma~\ref{gensofequif}. We also notice, in this example, that the maps in the two resolutions are quite similar. They only differ for the presence of some powers of~$z$. Unfortunately this is not the case in general, as Example~\ref{esempiobruttesizigie} will show.
\end{example}


\begin{notation} 
Let $I$ be a monomial ideal in $S=\K[x_1,\dots,x_n]$, with $G(I)=\{f_1,\dots,f_m\}$. Setting $d_j:=\deg(f_j)$ for all $j$, we denote $d:=\max\{d_j\mid j=1,\dots,m\}$ and $\delta:=\min\{d_j\mid j=1,\dots,m\}$. 
\end{notation}

\begin{remark}
For what concerns the first step of a minimal graded free resolution of $I$, we have
$$\vep\:\bigoplus_{j=1}^mS(-d_j)\la I,\qquad e_j\map f_j,$$
and the corresponding map for $I^\eq$ is easily given as
$$\zeta\:\bigoplus_{j=1}^mT(-d)\la I^\eq,\qquad \eta_j\map f_jz^{d-d_j}.$$ Let's continue by considering the sygyzy module $\Syz(I):=\ker(\vep)$. A syzygy of~$I$ is $(p_1,p_2,\dots,p_m)\in S^m$ such that $p_1f_1+p_2f_2+\dots+p_mf_m=0$. Hence, by writing $d-\delta=d_j-\delta+d-d_j$, we get
\begin{align*}
0&=(p_1f_1+p_2f_2+\dots+p_mf_m)z^{d-\delta}\\
&=p_1z^{d_1-\delta}(f_1z^{d-d_1})+p_2z^{d_2-\delta}(f_2z^{d-d_2})+\dots+p_mz^{d_m-\delta}(f_mz^{d-d_m}),
\end{align*}
so that $(p_1z^{d_1-\delta},p_2z^{d_2-\delta},\dots,p_mz^{d_m-\delta})$ is a syzygy for $I^\eq$. The problem is that the syzygies of $I^\eq$ obtained in this way do not generate in general all of $\Syz(I^\eq)$, as showed in Example~\ref{esempiobruttesizigie}.
\end{remark}

\begin{notation}\label{redtrivsyz}
Denote by $e_1,\dots,e_m$ the standard basis of $S^m$. For $I$ with $G(I)=\{f_1,\dots,f_m\}$, we have syzygies $f_je_i-f_ie_j$. These clearly map to zero, but they can be refined as
\begin{align*}
\sigma_{ij}&:=\frac{f_j}{\gcd(f_i,f_j)}e_i-\frac{f_i}{\gcd(f_i,f_j)}e_j\\
&=\frac{\lcm(f_i,f_j)}{f_i}e_i-\frac{\lcm(f_i,f_j)}{f_j}e_j,
\end{align*}
which we call the \tbs{reduced trivial syzygies} of $I$. A well-known theorem by Schreyer states that the reduced trivial syzygies generate all of $\Syz(I)$. (See Theorem~15.10 of~\cite{EisCA}.)
\end{notation}

\begin{example}\label{esempiobruttesizigie}
Consider now $S=\K[x_1,\dots,x_4]$, $T=S[z]$, and
$$I=(x_1x_2x_4,\ x_1^2x_2^2x_3,\ x_3^3x_4^3),\qquad 
I^\eq=(x_1x_2x_4z^3,\ x_1^2x_2^2x_3z,\ x_3^3x_4^3).$$ Then we get the Betti tables
$$\beta^S(I)=\begin{array}{c|cc}
&0&1\\
\hline
3&1&-\\
4&-&-\\
5&1&1\\
6&1&-\\
7&-&1\\
\hline
&3&2
\end{array}\qquad\text{and}\qquad
\beta^T(I^\eq)=\begin{array}{c|ccc}
&0&1&2\\
\hline
6&3&-&-\\
7&-&-&-\\
8&-&1&-\\
9&-&-&-\\
10&-&2&-\\
11&-&-&1\\
\hline
&3&3&1.
\end{array}$$
As already remarked, we always have $\beta^S_0(I)=\beta^T_0(I^\eq)$, but in this example we have strict inequalities $\beta^S_i(I)<\beta^T_i(I^\eq)$ for $i=1,2$. The matrices correspoding to the first syzygies of $I$ and $I^\eq$ are respectively
$$\left(\begin{array}{cc}
x_1x_2x_3&x_3^3x_4^2\\
-x_4&0\\
0&-x_1x_2
\end{array}\right)\quad\text{and}\quad
\left(\begin{array}{ccc}
x_1x_2x_3&x_3^3x_4^2&0\\
-x_4z^2&0&x_3^2x_4^3\\
0&-x_1x_2z^3&-x_1^2x_2^2z
\end{array}\right).$$
For $I$ the reduced trivial syzygy $\sigma_{23}$ is redundant, because we have
$\sigma_{23}=-x_3^2x_4^2\sigma_{12}+x_1x_2\sigma_{13}$. But for $I^\eq$ the corresponding syzygy, showing as third column in the matrix, is not redundant.
\end{example}

The following lemma is a classical result. We include a proof of it for sake of completeness. 

\begin{lemma}\label{lemmettogenerale}
Let $R$ be a polynomial over a field and let $M$ be a finitely generated $R$-module. Let $a\in R$ be $R$-regular and $M$-regular. Let $\F$ be a free resolution of $M$ over $R$. Then $\F\otimes_RR/(a)$ is a free resolution of $M/aM$ over $R/(a)$.
\end{lemma}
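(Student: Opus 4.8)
The plan is to prove this by a standard dimension shift / long exact sequence argument, exploiting that tensoring a free resolution with $R/(a)$ computes $\Tor^R_\bullet(M, R/(a))$, and showing all higher Tor vanish. First I would recall the setup: write $\F\colon \dots\to F_1\xrightarrow{d_1}F_0\xrightarrow{\vep}M\to0$ for the given free resolution over $R$. Since each $F_i$ is free, $\F\otimes_R R/(a)$ is a complex of free $R/(a)$-modules, and its homology is by definition $\Tor^R_i(M,R/(a))$. So the claim reduces to two points: (i) $H_0(\F\otimes_R R/(a))=M/aM$, which is immediate since $-\otimes_R R/(a)$ is right exact, so $F_1\otimes R/(a)\to F_0\otimes R/(a)\to M\otimes R/(a)=M/aM\to0$ stays exact; and (ii) $\Tor^R_i(M,R/(a))=0$ for all $i\ge1$.

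For (ii) I would use the short exact sequence $0\to R\xrightarrow{\cdot a}R\to R/(a)\to0$, which is exact on the left precisely because $a$ is $R$-regular. This is a length-one free resolution of $R/(a)$ as an $R$-module, so $\Tor^R_i(M,R/(a))$ can be computed from the complex $M\xrightarrow{\cdot a}M$ (tensoring that resolution with $M$). Hence $\Tor^R_0(M,R/(a))=M/aM$, $\Tor^R_1(M,R/(a))=\ker(M\xrightarrow{\cdot a}M)=\{m\in M\mid am=0\}$, and $\Tor^R_i(M,R/(a))=0$ for $i\ge2$ automatically. Now $a$ being $M$-regular says exactly that multiplication by $a$ on $M$ is injective, so $\Tor^R_1(M,R/(a))=0$ as well. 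Therefore all higher Tor vanish, and $\F\otimes_R R/(a)$ is a resolution of $M/aM$.

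Finally I would check that $\F\otimes_R R/(a)$ really is a resolution \emph{over $R/(a)$}, not merely an exact complex: each module $F_i\otimes_R R/(a)$ is a free $R/(a)$-module (a direct sum of copies of $R/(a)$, since $F_i$ is a direct sum of copies of $R$), and the differentials are $R/(a)$-linear, being reductions mod $a$ of the $R$-linear $d_i$. Combined with the exactness established above and the identification $H_0=M/aM$, this is exactly the statement. I do not need $R$ to be a polynomial ring for this argument — only that $a$ is a nonzerodivisor on $R$ and on $M$ — so the polynomial hypothesis in the statement is harmless.

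The only mildly delicate point, and the one I would present most carefully, is the clean identification of $\Tor^R_i(M,R/(a))$ via the two-term resolution of $R/(a)$ and the translation of the hypotheses ``$a$ is $R$-regular'' and ``$a$ is $M$-regular'' into the exactness statements that make that two-term complex a resolution and make $\Tor_1$ vanish. Everything else is formal homological algebra. An alternative phrasing, if one prefers to avoid invoking $\Tor$ by name, is to run the dimension-shift directly: break $\F$ into short exact sequences $0\to Z_i\to F_i\to Z_{i-1}\to0$ (with $Z_{-1}=M$), prove by induction that $a$ is $Z_i$-regular for all $i$ (using the snake lemma on multiplication-by-$a$, together with the fact that $F_i$ is free hence $a$-torsion-free), and then deduce exactness of $\F\otimes_R R/(a)$ from the resulting short exact sequences of quotients; the induction step is where $M$-regularity gets propagated up the resolution. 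Either route works; I would choose the $\Tor$ route for brevity.
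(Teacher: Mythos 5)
Your proof is correct and follows essentially the same route as the paper's: both identify the homology of $\F\otimes_R R/(a)$ with $\Tor^R_i(M,R/(a))$, compute the latter via the two-term free resolution $0\to R\xrightarrow{\cdot a}R$ of $R/(a)$ (valid by $R$-regularity of $a$), and observe that $M$-regularity makes $0\to M\xrightarrow{\cdot a}M$ exact so all higher Tor vanish. The additional remarks (the $H_0$ identification, freeness of $F_i\otimes_R R/(a)$ over $R/(a)$, and the optional dimension-shift alternative) are sound and consistent with the paper's argument.
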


\begin{proof}
We may write the modules in the resolution $\F$ as  $F_i=R^{n_i}$ for some $n_i\in\N$. By the distributive property of the tensor product we have
$$\bigg(\bigoplus_{j=1}^{n_i}R\bigg)\otimes_RR/(a)=\bigoplus_{j=1}^{n_i}(R\otimes_RR/(a))=\bigoplus_{j=1}^{n_i}R/(a).$$
Moreover, since $-\otimes_RR/(a)$ is a functor, $\F\otimes_RR/(a)$ is a complex. To conclude, we show that it is exact. We have $H_i(\F\otimes_RR/(a))\iso\Tor^R_i(M,R/(a))$ and we use the commutativity of~$\Tor$. That is, by the $R$-regularity of $a$, we know that
$$\G:\quad 0\la R\stackrel a\la R$$
is a free resolution of $R/(a)$ over $R$, so that $\Tor^R_i(M,R/(a))\iso H_i(M\otimes_R\G)$. But this homology is~$0$ for $i>0$ because
$$M\otimes_R\G:\quad 0\la M\stackrel a\la M$$ is exact due to the $M$-regularity of $a$.
\end{proof}

\begin{proposition}\label{inequalitybnumbs}
We have $\beta_0^S(I)=\beta_0^T(I^\eq)$ and $\beta^S_i(I)\le\beta^T_i(I^\eq)$ for all $i>0$.
\end{proposition}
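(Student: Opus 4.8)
The plan is to realize $I^\eq$ (up to a harmless substitution) as a quotient of a module resolving $I$ by a regular element, and then invoke Lemma~\ref{lemmettogenerale}. The natural candidate is the following. Work in $T=S[z]$ and consider the ideal $\widetilde I\subset T$ generated by $f_1z^{d-d_1},\dots,f_mz^{d-d_m}$ \emph{together with nothing else}: this is literally $I^\eq$, but now viewed inside the bigger ring. The key observation is that $z$ is both $T$-regular and $(T/I^\eq)$-regular. The first is obvious since $T$ is a domain. For the second, a monomial $u\in T$ with $uz\in I^\eq$ must be divisible by some $f_jz^{d-d_j}$; if $z^{d-d_j}\mid uz$ only because of the extra factor $z$, i.e. $d-d_j\geq 1$, then actually $f_j\mid u$ and $z^{d-d_j-1}\mid u$, so $f_jz^{d-d_j-1}\mid u$; but $f_jz^{d-d_j-1}$ need not lie in $I^\eq$, so this needs a little more care. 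The clean way is: $uz\in I^\eq$ and $I^\eq$ monomial forces $f_jz^{d-d_j}\mid uz$ for some $j$, and since $z\nmid f_j$ is false in general we instead argue that $uz\in I^\eq \Rightarrow u\in (I^\eq : z)$, and one checks directly from the generators that $(I^\eq:z)$ is again generated by monomials not involving the quotient obstruction — concretely, $I^\eq : z = I^\eq + (f_j z^{d-d_j-1} \mid d_j<d)$, and one shows the new generators are already in $I^\eq$ only when... hmm. So the honest statement to prove is simply: $z$ is a nonzerodivisor on $T/I^\eq$, equivalently no minimal generator of $I^\eq$ is divisible by $z$ times a monomial in $I^\eq$ — and since every minimal generator $f_jz^{d-d_j}$ that is divisible by $z$ satisfies $d_j<d$, while $f_jz^{d-d_j-1}\notin I^\eq$ by minimality (Lemma~\ref{gensofequif}), this is exactly the content.

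Granting that, the argument runs as follows. First I would set $d:=\max_j d_j$ and pass to the ring $T=S[z]$. I would then produce a free resolution of $I^\eq$ over $T$ of the form $\F\otimes_S T$ followed by a change, where $\F$ is the minimal free resolution of $I$ over $S$: more precisely, let $\F_\bullet \to I\to 0$ be minimal over $S$, base-change to get $\F_\bullet\otimes_S T \to I\otimes_S T = IT\to 0$ (still exact, and still a resolution, since $T$ is $S$-flat). Now $IT$ and $I^\eq$ are \emph{not} equal, but $I^\eq = \phi(IT)$ for the $S$-algebra map $\phi$ "multiply $f_j$-component by $z^{d-d_j}$"; I would instead observe that $I^\eq$ is obtained from $IT$ by a monomial change of generators that does not change Betti numbers of $IT$ itself — rather, the cleaner route is: apply Lemma~\ref{lemmettogenerale} with $R=T$, $M=T/I^\eq$, $a=z$, getting that a $T$-free resolution of $T/I^\eq$ tensored down by $z$ resolves $(T/I^\eq)/z(T/I^\eq) = S/(I^\eq+(z))/\!\sim\ = S/I$. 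Reading this backwards: $\beta_i^T(T/I^\eq)\geq \beta_i^{S}(S/I)$ for all $i$, because tensoring a minimal $T$-resolution of $T/I^\eq$ with $T/(z)=S$ yields a (not necessarily minimal) $S$-resolution of $S/I$, whose ranks are the $\beta_i^T$'s; hence $\beta_i^S(S/I)\leq\beta_i^T(T/I^\eq)$. Shifting the homological index by one (Betti numbers of an ideal versus its quotient ring), this is precisely $\beta_i^S(I)\leq\beta_i^T(I^\eq)$ for all $i>0$, and the $i=0$ equality $\beta_0^S(I)=\beta_0^T(I^\eq)$ is Lemma~\ref{gensofequif}.

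The one subtlety I would be careful about is that after tensoring with $T/(z)=S$ the map $\F_\bullet$ may cease to be minimal, which is fine — minimality is only used to count, and we only need the inequality, which comes from the general fact that the minimal resolution is a summand of any resolution (equivalently $\beta_i(N)\leq \operatorname{rank}(G_i)$ for any free resolution $G_\bullet$ of $N$). I would state this as a one-line lemma or cite it as standard. The main obstacle, and the only place real work happens, is verifying that $z$ is a nonzerodivisor on $T/I^\eq$; everything else is formal. That verification is the monomial computation sketched above: if $uz\in I^\eq$ with $u$ a monomial, pick a generator $f_jz^{d-d_j}$ dividing $uz$; if $z\mid$ that generator, then $d_j<d$ and dividing by the single factor $z$ gives $f_jz^{d-d_j-1}\mid u$, and one then checks $f_jz^{d-d_j-1}$ is itself a multiple of some generator of $I^\eq$ (it is: it is a multiple of $f_jz^{d-d_j}$... no — rather one argues $u\in I^\eq$ directly because $f_j\mid u$ and comparing $z$-exponents, $\deg_z(u)\geq d-d_j$, so $f_jz^{d-d_j}\mid u$), hence $u\in I^\eq$; the remaining case $z\nmid f_jz^{d-d_j}$, i.e. $d_j=d$, gives $f_j\mid u$ outright. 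This closes the argument.
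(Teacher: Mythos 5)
Your overall plan — take a minimal $T$-resolution of $I^\eq$, kill a suitable regular element $a$ to obtain an $S$-resolution of $I$, and compare with the minimal one via the direct-summand fact — is exactly the structure of the paper's proof. But you chose $a=z$, and this is the wrong element; the paper takes $a=z-1$, i.e.\ sets $z=1$ rather than $z=0$. Two separate things break with $a=z$. First, $z$ is a zero-divisor on $T/I^\eq$ as soon as $I$ has generators in more than one degree: if $d_j<d$, then $f_jz^{d-d_j-1}\notin I^\eq$ (Lemma~\ref{gensofequif}) while $z\cdot f_jz^{d-d_j-1}=f_jz^{d-d_j}\in I^\eq$. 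Your verification at the end has a sign slip hiding this: from $f_jz^{d-d_j}\mid uz$ you may only deduce $\deg_z(u)\ge d-d_j-1$, not $\ge d-d_j$, and that gap of one is precisely where a zero-divisor lives. (Ironically, the sentence where you observe $f_jz^{d-d_j-1}\notin I^\eq$ ``by minimality'' is exactly the construction of a witness that $z$ is \emph{not} regular on $T/I^\eq$.) Second, and independently of regularity: $(T/I^\eq)\otimes_T T/(z)$ is not $S/I$. Setting $z=0$ kills every generator $f_jz^{d-d_j}$ with $d_j<d$, so the image of $I^\eq$ in $T/(z)\cong S$ is only the ideal generated by the top-degree $f_j$'s, not $I$.

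Replacing $z$ by $z-1$ repairs both problems at once and recovers the paper's argument. The element $z-1$ is trivially regular on $I^\eq$ (a submodule of the domain $T$), which is all Lemma~\ref{lemmettogenerale} needs with $M=I^\eq$; and $T/(z-1)\cong S$ via $z\mapsto 1$ identifies $I^\eq/(z-1)I^\eq$ with $I$ — this is the dehomogenization of Remark~\ref{fromIeqtoI}. Tensoring the minimal $T$-resolution of $I^\eq$ with $T/(z-1)$ then gives an $S$-free resolution of $I$ with ranks $\beta_i^T(I^\eq)$, possibly non-minimal, and comparison with the minimal resolution yields $\beta_i^S(I)\le\beta_i^T(I^\eq)$ for $i>0$. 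The $i=0$ equality is Lemma~\ref{gensofequif}, as you noted.
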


\begin{proof}
The equality $\beta_0^S(I)=\beta_0^T(I^\eq)$ is the content of Lemma~\ref{gensofequif}. For the rest of the proof, we specify Lemma~\ref{lemmettogenerale} to our setting:
$$R=T:=\K[x_1,\dots,x_n,z],\qquad M=I^\eq,\qquad a=z-1.$$
We observe that $z-1$ is clearly $T$-regular and hence $I^\eq$-regular. Moreover, $T/(z-1)=S$ and $I^\eq/(z-1)I^\eq=I$. So,  let now 
$$\F:\quad 0\la T^{\beta_p}\la T^{\beta_{p-1}}\la\dots\la T^{\beta_0}$$
be the \emph{minimal} graded free resolution of $I^\eq$, where $\beta_i=\beta_i^T(I^\eq)$. Then we get from Lemma~\ref{lemmettogenerale} we get that
$$\F\otimes_TT/(z-1):\quad 0\la S^{\beta_p}\la S^{\beta_{p-1}}\la\dots\la S^{\beta_0}$$
is a free resolution of $I$, possibly not minimal. A well-known result (see for instance Theorem~7.5 of~\cite{Pe}) states that any resolution contains the minimal one as a direct summand, and therefore we get the desired inequalities.
\end{proof}

Recall now the notation for the reduced trivial syzygies in Notation~\ref{redtrivsyz} and recall that the reduced trivial syzygies generate $\Syz(I)$, but some of them might be redundant.

\begin{lemma}\label{minimalrts}
The reduced trivial syzygy~$\sigma_{ij}$ is redundant if and only if there exists $k\notin\{i,j\}$ such that  $\lcm(f_k,f_i)$ and $\lcm(f_k,f_j)$ divide $\lcm(f_i,f_j)$.
\end{lemma}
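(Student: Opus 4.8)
The plan is to characterize when $\sigma_{ij}$ lies in the submodule generated by the other reduced trivial syzygies, by working monomial-degree by monomial-degree. Since everything is monomial, $\Syz(I)$ is a $\Z^n$-graded module, and the reduced trivial syzygy $\sigma_{ij}$ is homogeneous of multidegree $m_{ij}:=\lcm(f_i,f_j)$. So $\sigma_{ij}$ is redundant precisely when it can be expressed as an $S$-linear combination of the $\sigma_{k\ell}$ with $\{k,\ell\}\ne\{i,j\}$, and by homogeneity only those $\sigma_{k\ell}$ with $\lcm(f_k,f_\ell)$ dividing $m_{ij}$ can appear. This is the standard setup for analyzing syzygies of monomial ideals via the lcm-lattice (cf.\ the reference to Schreyer's theorem in Notation~\ref{redtrivsyz}), and I would phrase the argument in that language.

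First I would restrict attention to the graded piece $\Syz(I)_{m_{ij}}$ of the syzygy module in multidegree $m_{ij}$. A syzygy in this multidegree is a relation $\sum_\ell c_\ell \frac{m_{ij}}{f_\ell} e_\ell = 0$ with $c_\ell\in\K$, summed over those $\ell$ with $f_\ell \mid m_{ij}$; equivalently it is an element of the kernel of the $\K$-linear map sending $e_\ell$ (for such $\ell$) to the monomial $m_{ij}/f_\ell$ times $f_\ell$, i.e.\ to $m_{ij}$ — so actually every such formal combination with $\sum c_\ell = 0$ is a syzygy. The key combinatorial object is the simplicial complex (or rather the ``upper Koszul'' data) on the set $T_{ij}:=\{\ell : f_\ell \mid m_{ij}\}$, and $\sigma_{ij}$ — supported on $\{i,j\}\subseteq T_{ij}$ — is redundant iff $i$ and $j$ lie in the same connected component of the subcomplex obtained by only using edges $\{k,\ell\}$ with $\lcm(f_k,f_\ell)$ a \emph{proper} divisor of $m_{ij}$. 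I would make this precise, then show that such a connecting path can always be shortened to length two: if $i = \ell_0, \ell_1, \dots, \ell_r = j$ is a path with each $\lcm(f_{\ell_{t}},f_{\ell_{t+1}})$ properly dividing $m_{ij}$, then some intermediate $k=\ell_t\notin\{i,j\}$ already has $\lcm(f_k,f_i)$ and $\lcm(f_k,f_j)$ dividing $m_{ij}$ — indeed $f_k \mid m_{ij}$ forces $\lcm(f_k,f_i)\mid m_{ij}$ and $\lcm(f_k,f_j)\mid m_{ij}$ automatically, which is exactly the stated condition.

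Running the logic in the two directions: for the ``if'' direction, given $k$ with $\lcm(f_k,f_i)\mid m_{ij}$ and $\lcm(f_k,f_j)\mid m_{ij}$, one checks the identity
\begin{equation*}
\sigma_{ij} = \frac{\lcm(f_i,f_j)}{\lcm(f_k,f_i)}\,\sigma_{ik} - \frac{\lcm(f_i,f_j)}{\lcm(f_k,f_j)}\,\sigma_{jk}
\end{equation*}
holds in $S^m$ — the coefficients are genuine monomials precisely because of the divisibility hypotheses — which exhibits $\sigma_{ij}$ as redundant. For the ``only if'' direction, suppose no such $k$ exists; then for every $k\notin\{i,j\}$ with $f_k\mid m_{ij}$, at least one of $\lcm(f_k,f_i),\lcm(f_k,f_j)$ equals $m_{ij}$, and I would argue (again working in multidegree $m_{ij}$) that $i$ and $j$ are separated once we delete the ``full-degree'' edges, so no $\K$-combination of the other $\sigma_{k\ell}$ can produce $\sigma_{ij}$; concretely, I would track the $e_i$- versus $e_j$-coordinate and show a would-be expression of $\sigma_{ij}$ forces a contradiction modulo $m_{ij}$-degree considerations.

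The main obstacle I expect is the ``only if'' direction — making rigorous that the combinatorial connectivity obstruction genuinely prevents $\sigma_{ij}$ from being a combination of the other reduced trivial syzygies, rather than merely of the other \emph{trivial} syzygies $f_\ell e_k - f_k e_\ell$. One clean way around this is to invoke the lcm-lattice description of the minimal free resolution (Gasharov--Peeva--Welker), under which $\beta_{1,m_{ij}}(I)$ counts a homological feature of the open interval below $m_{ij}$ in the lcm-lattice; but to keep the proof elementary and self-contained I would instead do the direct $\K$-linear-algebra computation in the single multidegree $m_{ij}$, which is finite-dimensional and entirely explicit. The rest of the argument is routine bookkeeping with lcm's and gcd's, so I would not belabor it.
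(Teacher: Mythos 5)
Your $(\Leftarrow)$ direction is exactly the paper's: the explicit two-term expression
$\sigma_{ij}=\frac{\lcm(f_i,f_j)}{\lcm(f_k,f_i)}\sigma_{ik}-\frac{\lcm(f_i,f_j)}{\lcm(f_k,f_j)}\sigma_{jk}$,
where the divisibility hypotheses make the coefficients honest monomials. Your $(\Rightarrow)$ direction has the right underlying plan — restrict to the single $\Z^n$-multidegree $m_{ij}=\lcm(f_i,f_j)$ and compare the $e_i$/$e_j$-coordinates — and this is, modulo presentation, the same thing the paper does (it writes $\sigma_{ij}=\sum p_{k\ell}\sigma_{k\ell}$, compares the $e_i$-coefficient, multiplies by $f_i$, and uses that membership in a monomial ideal happens via a single generator). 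You also make the key observation that the lemma's condition is really just ``$f_k\mid m_{ij}$ for some $k\notin\{i,j\}$.''

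However, the middle of your $(\Rightarrow)$ argument is garbled in a way worth flagging. You write that if no such $k$ exists, then ``for every $k\notin\{i,j\}$ with $f_k\mid m_{ij}$, at least one of $\lcm(f_k,f_i),\lcm(f_k,f_j)$ \emph{equals} $m_{ij}$.'' But $f_k\mid m_{ij}$ already forces \emph{both} of those lcm's to \emph{divide} $m_{ij}$ — which is precisely the condition you supposed fails — so the quantified set is empty and the sentence is vacuous rather than a usable dichotomy. The same confusion is behind the ``proper divisor'' restriction and the whole connectivity/path-shortening picture, which isn't needed: once you know that any $\sigma_{k\ell}$ capable of contributing in multidegree $m_{ij}$ must have $\lcm(f_k,f_\ell)\mid m_{ij}$, hence $f_k,f_\ell\mid m_{ij}$, the existence of a nontrivial expression for $\sigma_{ij}$ immediately produces some index in $\{k,\ell\}\setminus\{i,j\}$ dividing $m_{ij}$. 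That is the clean version of your intended step, and it closes the gap; as written, though, the $(\Rightarrow)$ direction is a sketch whose stated intermediate facts don't hold, so you should excise the connectivity language and replace it with this one-line multidegree count (or with the paper's coefficient comparison, which amounts to the same thing).
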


\begin{proof}
$(\Leftarrow)$ This is a special case, with a different notation, of Proposition~8 in Section~2.9 of~\cite{CLO}. We inlcude a short proof for sake of completeness: by assumption, we have the monomials $u:=\lcm(f_i,f_j)/\lcm(f_k,f_i)$ and $v:=\lcm(f_i,f_j)/\lcm(f_k,f_j)$, so that
\begin{align*}
-u\sigma_{ki}+v\sigma_{kj}&=-\frac{\lcm(f_i,f_j)}{\lcm(f_k,f_i)}\Big(\frac{\lcm(f_k,f_i)}{f_k}e_k-\frac{\lcm(f_k,f_i)}{f_i}e_i\Big)\\
&\qquad+\frac{\lcm(f_k,f_j)}{\lcm(f_k,f_j)}\Big(\frac{\lcm(f_k,f_j)}{f_k}e_k-\frac{\lcm(f_k,f_j)}{f_j}e_j\Big)\\
&=\frac{\lcm(f_i,f_j)}{f_i}e_i-\frac{\lcm(f_i,f_j)}{f_j}e_j=\sigma_{ij}.
\end{align*}

$(\Rightarrow)$ Assuming that $\sigma_{ij}$ is redundant, we may write
$$\sigma_{ij}=\sum_{\{k,\ell\}\ne\{i,j\}}p_{k\ell}\sigma_{k\ell}$$
for some $p_{k\ell}$. In particular the coefficients of $e_i$ and $e_j$ on both sides are equal, so that isolating all terms which involve $i$, and respectively $j$, we get  $$\frac{\lcm(f_i,f_j)}{f_i}=p_{i\ell_1}\frac{\lcm(f_i,f_{\ell_1})}{f_i}+\dots+p_{i\ell_t}\frac{\lcm(f_i,f_{\ell_t})}{f_i}$$
and a similar expression for $\lcm(f_i,f_j)/f_j$. If we multiply by $f_i$, and respectively by $f_j$, and observe that the ideals generated by the least common multiples on the right-hand side are monomial ideals, this means that there exist~$\ell$ and~$k$ such that $\lcm(f_i,f_\ell)$  and $\lcm(f_k,f_j)$ divide $\lcm(f_i,f_j)$. In particular $f_k$ divides $\lcm(f_i,f_j)$, so that indeed both $\lcm(f_k,f_j)$ and $\lcm(f_i,f_k)$ divide $\lcm(f_i,f_j)$.
\end{proof}

Denote
\begin{align*}
\sigma_{ij}^\eq&:=\frac{\lcm(g_i,g_j)}{g_i}e_i-\frac{\lcm(g_i,g_j)}{g_j}e_j\\
&\ =\frac{\lcm(f_i,f_j)z^{\max\{0,d_i-d_j\}}}{f_i}e_i
-\frac{\lcm(f_i,f_j)z^{\max\{0,d_j-d_i\}}}{f_j}e_j
\end{align*}
the reduced trivial syzygies for $I^\eq$, where $g_i=f_iz^{d-d_i}$.

\begin{proposition}\label{redsyzeq}
The reduced trivial syzygy~$\sigma_{ij}^\eq$ is redundant if and only if there exists $k\notin\{i,j\}$ such that  $\lcm(f_k,f_i)$ and $\lcm(f_k,f_j)$ divide $\lcm(f_i,f_j)$ and $\min\{d_i,d_j\}\le d_k$.
\end{proposition}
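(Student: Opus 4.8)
The statement is the "equified" analogue of Lemma~\ref{minimalrts}, so the natural strategy is to reduce to that lemma by carefully tracking the powers of $z$. Recall that $g_i = f_i z^{d-d_i}$, so $\lcm(g_i,g_j) = \lcm(f_i,f_j)\,z^{d-\min\{d_i,d_j\}}$, and hence $\lcm(g_i,g_j)/g_i = \bigl(\lcm(f_i,f_j)/f_i\bigr)z^{d_i - \min\{d_i,d_j\}} = \bigl(\lcm(f_i,f_j)/f_i\bigr)z^{\max\{0,d_i-d_j\}}$, matching the display before the statement. The key observation is that the divisibility relation $\lcm(g_k,g_i)\mid \lcm(g_i,g_j)$ translates, after cancelling the $x$-part via Lemma~\ref{minimalrts}'s criterion, into a condition on $z$-exponents: $\lcm(g_k,g_i)\mid\lcm(g_i,g_j)$ holds iff $\lcm(f_k,f_i)\mid\lcm(f_i,f_j)$ \emph{and} $z^{d-\min\{d_k,d_i\}}\mid z^{d-\min\{d_i,d_j\}}$, i.e.\ $d-\min\{d_k,d_i\}\le d-\min\{d_i,d_j\}$, i.e.\ $\min\{d_i,d_j\}\le\min\{d_k,d_i\}$. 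Together with the symmetric condition coming from $j$, this forces $\min\{d_i,d_j\}\le d_k$ (the conditions $\min\{d_i,d_j\}\le\min\{d_k,d_i\}$ and $\min\{d_i,d_j\}\le\min\{d_k,d_j\}$ are jointly equivalent to $\min\{d_i,d_j\}\le d_k$, since $\min\{d_k,d_i\}\ge\min\{d_i,d_j\}$ automatically when $d_i\ge d_j$, etc.). So the plan is to run the proof of Lemma~\ref{minimalrts} verbatim on the generators $g_1,\dots,g_m$, and then simplify the resulting $z$-exponent bookkeeping.

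For the ($\Leftarrow$) direction, I would argue directly as in Lemma~\ref{minimalrts}: given $k$ with $\lcm(f_k,f_i),\lcm(f_k,f_j)\mid\lcm(f_i,f_j)$ and $\min\{d_i,d_j\}\le d_k$, I claim $\lcm(g_k,g_i)$ and $\lcm(g_k,g_j)$ divide $\lcm(g_i,g_j)$. The $x$-part is immediate from the hypothesis on the $f$'s. For the $z$-part one needs $d-\min\{d_k,d_i\}\le d-\min\{d_i,d_j\}$ and $d-\min\{d_k,d_j\}\le d-\min\{d_i,d_j\}$, i.e.\ $\min\{d_i,d_j\}\le\min\{d_k,d_i\}$ and $\min\{d_i,d_j\}\le\min\{d_k,d_j\}$; both follow from $\min\{d_i,d_j\}\le d_k$ combined with the trivial bounds $\min\{d_i,d_j\}\le d_i$ and $\min\{d_i,d_j\}\le d_j$. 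Hence $k$ witnesses the redundancy of $\sigma_{ij}^\eq$ via the "($\Leftarrow$)" computation already carried out in the proof of Lemma~\ref{minimalrts} (the same cancellation of coefficients $u\sigma_{ki}^\eq$, $v\sigma_{kj}^\eq$ goes through, now with $g$'s).

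For the ($\Rightarrow$) direction, suppose $\sigma_{ij}^\eq$ is redundant. Applying Lemma~\ref{minimalrts} to the ideal $I^\eq$ (with its minimal generators $g_1,\dots,g_m$, which are minimal by Lemma~\ref{gensofequif}), there exists $k\notin\{i,j\}$ with $\lcm(g_k,g_i)\mid\lcm(g_i,g_j)$ and $\lcm(g_k,g_j)\mid\lcm(g_i,g_j)$. Comparing $x$-parts gives $\lcm(f_k,f_i)\mid\lcm(f_i,f_j)$ and $\lcm(f_k,f_j)\mid\lcm(f_i,f_j)$; comparing $z$-exponents gives $\min\{d_i,d_j\}\le\min\{d_k,d_i\}$ and $\min\{d_i,d_j\}\le\min\{d_k,d_j\}$. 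If $\min\{d_i,d_j\}=d_i$ (WLOG $d_i\le d_j$), the first inequality reads $d_i\le\min\{d_k,d_i\}$, forcing $d_i\le d_k$, which is exactly $\min\{d_i,d_j\}\le d_k$. This produces the required $k$.

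**Main obstacle.** The only genuinely delicate point is making sure the "same $k$" works: Lemma~\ref{minimalrts} applied to $I^\eq$ hands us a $k$ satisfying the $g$-divisibility conditions, and I must check that this $k$ simultaneously satisfies \emph{both} the $f$-divisibility conditions \emph{and} the degree inequality — which it does, because the $g$-divisibilities split cleanly into ($x$-part) $\wedge$ ($z$-part) with no interaction (the $z$-exponent of $g_i$ depends only on $d_i$, independent of the $x$-monomial $f_i$). So there is no circularity or loss; the $z$-exponent inequality is automatically one-directional once the roles of $i,j,k$ are fixed by which of $d_i,d_j$ is smaller. I would double-check the edge case $d_i=d_j$ (then $z^{d-d_i}$ is a common factor and one recovers Lemma~\ref{minimalrts} for $I$ with an extra harmless condition $d_i\le d_k$ forced anyway by $\min\{d_i,d_j\}\le\min\{d_k,d_i\}=\min\{d_k,d_i\}$). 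Everything else is routine bookkeeping.
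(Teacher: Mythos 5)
Your proof is correct and follows essentially the same route as the paper's: apply Lemma~\ref{minimalrts} to $I^\eq$ (whose generators $g_1,\dots,g_m$ are minimal by Lemma~\ref{gensofequif}) and translate the $\lcm$-divisibility conditions on the $g$'s into the $x$-part condition on the $f$'s together with the $z$-exponent inequality, which simplifies to $\min\{d_i,d_j\}\le d_k$. The paper phrases the $z$-exponent condition as $\max\{d-d_i,d-d_j\}\ge d-d_k$ rather than via pairwise $\min$'s, but this is only a cosmetic difference; the content and the "no interaction between $x$-part and $z$-part" observation you flag in your obstacle paragraph are exactly what makes the paper's one-line argument go through.
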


\begin{proof}
This follows by Lemma~\ref{minimalrts}: $\lcm(g_i,g_j)=\lcm(f_i,f_j)z^{\max\{d-d_i,d-d_j\}}$ is divided by $\lcm(g_i,g_k)$ and $\lcm(g_k,g_j)$ if and only if $\lcm(f_i,f_j)$ is divided by $\lcm(f_i,f_k)$ and $\lcm(f_k,f_j)$ and additionally $\max\{d-d_i,d-d_j\}\ge d-d_k$, which is equivalent to $\min\{d_i,d_j\}\le d_k$.
\end{proof}

\subsubsection{The $\lcm$-lattice of $I^\eq$}\label{lcmlattices}

We recall the notion of $\lcm$-lattice of a monomial ideal and we compare that of $I$ and of $I^\eq$. This tool provides a way to construct a resolution of $I^\eq$, albeit not necessarily minimal.
The background for this section is taken from Section~58 of~\cite{Pe}.  We start by recalling that a \tbs{lattice} is a poset in which every pair of elements has a least upper bound and a greatest lower bound.

\begin{definition}
Let $I$ be a monomial ideal with $G(I)=(f_1,\dots,f_m)$. The \tbs{lcm-lattice} of $I$ is the lattice that has as elements the least common multiples of the subsets of $\{f_1,\dots,f_m\}$, ordered by divisibility. We denote the $\lcm$-lattice of $I$ by $L_I$. 
\end{definition}

Notice in particular that the bottom element in~$L_I$ is~$1$, which is the least common multiple of the empty set. For more details on the notion of $\lcm$-lattice, see Section~58 of~\cite{Pe}.

\begin{example}\label{esempiobruttesizigiecontinua}
Consider the ideal $I$ in Example~\ref{esempiobruttesizigie}: 
$$I=(x_1x_2x_4,\ x_1^2x_2^2x_3,\ x_3^3x_4^3)\ \se\  S=\K[x_1,\dots,x_4].$$
The $\lcm$-lattice $L_I$ is depicted in Figure~\ref{lcmlatticefirstex}.
\begin{figure}
\begin{center}
\begin{tikzpicture} [>=latex, xscale=.85, yscale=.7]
\fill (2,6) circle (0.1);
\coordinate [label=above: $x_1^2x_2^2x_3^3x_4^3$] (26) at (2,6);
\fill (0,4) circle (0.1);
\coordinate [label=left: $x_1^2x_2^2x_3x_4$] (04) at (0,4);
\fill (2,4) circle (0.1);
\coordinate [label=right: $x_1x_2x_3^3x_4^3$] (24) at (2,4);
\fill (0,2) circle (0.1);
\coordinate [label=left: $x_1x_2x_4$] (02) at (0,2);
\fill (2,2) circle (0.1);
\coordinate [label= right: $x_1^2x_2^2x_3$] (BLG) at (2,2);
\fill (4,2) circle (0.1);
\coordinate [label=right: $x_3^3x_4^3$] (42) at (4,2);
\fill (2,0) circle (0.1);
\coordinate [label=below: $1$] (1) at (2,0);
\draw [thick] (2,0) -- (0,2)--(0,4)--(2,6)--(2,4)--(4,2)--(2,0)--(2,2)--(0,4);
\draw [thick] (0,2)--(2,4);
\end{tikzpicture}
\caption{The $\lcm$-lattice of $I=(x_1x_2x_4, x_1^2x_2^2x_3, x_3^3x_4^3)$ in Example~\ref{esempiobruttesizigiecontinua}.}
\label{lcmlatticefirstex}
\end{center}
\end{figure}
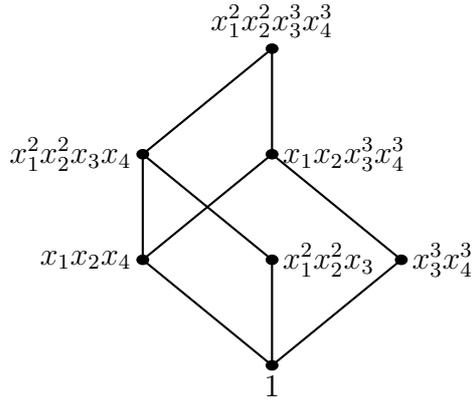
\end{example}

\begin{remark}\label{remarksublattice}
The $\lcm$-lattice $L_I$ of $I$ is isomorphic to a sublattice of the $\lcm$-lattice $L_{I^\eq}$ of $I^\eq$. To see this, we can set $z=1$ in $L_{I^\eq}$ and observe that if we multiply any two monomials $u$ and $v$ in the variables $x_1,\dots,x_n$ by some powers of~$z$, then we simply have
$$\lcm(uz^a,vz^b)=\lcm(u,v)z^{\max\{a,b\}}.$$
The difference, as illustrated in the next example, is that we might find some redundancies after setting~$z=1$.
\end{remark}

\begin{example}\label{alotoflcmlatt}
We compare the $\lcm$-lattices of $I$ and $I^\eq$: if $I$ is the ideal in Example~\ref{exbettinumbsok}, then $L_I\iso L_{I^\eq}$. The two lattices are drawn in  Figure~\ref{tuttobene}. On the other hand, if $I$ is the ideal in Example~\ref{esempiobruttesizigiecontinua}, then the isomorphic copy of~$L_I$ inside $L_{I^\eq}$ is strictly contained inside $L_{I^\eq}$. The two lattices are drawn in Figure~\ref{unsaccodilcm}. In $L_I$, the dashed part is a redundancy that we get  by setting $z=1$ in $L_{I^\eq}$; compare it with the drawing in Figure~\ref{lcmlatticefirstex}.
\begin{figure}
\begin{center}
\begin{tikzpicture} [>=latex, xscale=0.85, yscale=0.65]
\fill (2,14) circle (0.1);
\coordinate [label=above: $x_1^2x_2^3x_3^2$] (214) at (2,14);
\fill (3,12) circle (0.1);
\coordinate [label=right: $x_1x_2^3x_3^2$] (312) at (3,12);
\fill (1,12) circle (0.1);
\coordinate [label=left: $x_1^2x_2^2x_3^2$] (112) at (1,12);
\fill (4,10) circle (0.1);
\coordinate [label=right: $x_2^3x_3^2$] (410) at (4,10);
\fill (2,10) circle (0.1);
\coordinate [label=right: $x_1x_2^2x_3^2$] (210) at (2,10);
\fill (0,10) circle (0.1);
\coordinate [label=left: $x_1^2$] (010) at (0,10);
\fill (2,8) circle (0.1);
\coordinate [label=below: $1$] (28) at (2,8);
\draw [thick] (2,8)--(0,10)--(1,12)--(2,14)--(3,12)--(4,10)--(2,8)--(2,10)--(3,12);
\draw [thick] (2,10)--(1,12);
\end{tikzpicture}\qquad\quad
\begin{tikzpicture} [>=latex, xscale=0.85, yscale=0.65]
\fill (2,14) circle (0.1);
\coordinate [label=above: $x_1^2x_2^3x_3^2z^3$] (214) at (2,14);
\fill (3,12) circle (0.1);
\coordinate [label=right: $x_1x_2^3x_3^2$] (312) at (3,12);
\fill (1,12) circle (0.1);
\coordinate [label=left: $x_1^2x_2^2x_3^2z^3$] (112) at (1,12);
\fill (4,10) circle (0.1);
\coordinate [label=right: $x_2^3x_3^2$] (410) at (4,10);
\fill (2,10) circle (0.1);
\coordinate [label=right: $x_1x_2^2x_3^2$] (210) at (2,10);
\fill (0,10) circle (0.1);
\coordinate [label=left: $x_1^2z^3$] (010) at (0,10);
\fill (2,8) circle (0.1);
\coordinate [label=below: $1$] (28) at (2,8);
\draw [thick] (2,8)--(0,10)--(1,12)--(2,14)--(3,12)--(4,10)--(2,8)--(2,10)--(3,12);
\draw [thick] (2,10)--(1,12);
\end{tikzpicture}
\caption{Comparison of the $\lcm$-lattices of $I=(x_1^2, x_1x_2^2x_3^2, x_2^3x_3^2, )$,  on the left, and~$I^\eq$, on the right. In this case we have $L_I\iso L_{I^\eq}$. See  Example~\ref{alotoflcmlatt}. }
\label{tuttobene}
\end{center}
\end{figure}
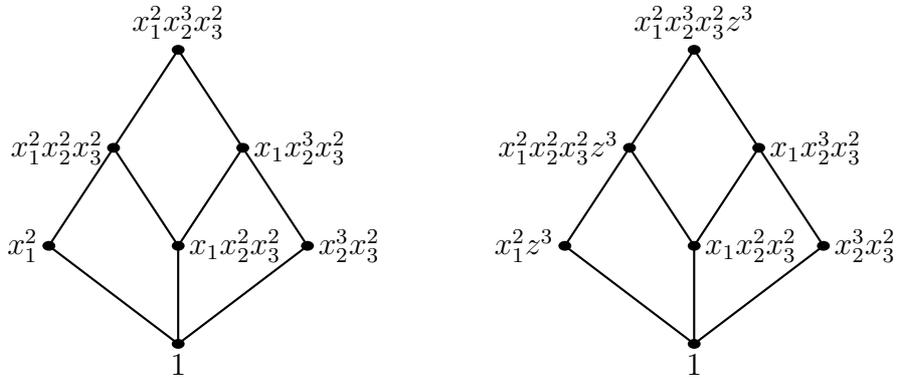

\begin{figure}
\begin{center}
\begin{tikzpicture} [>=latex, scale=0.75]
\fill (2,6) circle (0.1);
\coordinate [label=above: $x_1^2x_2^2x_3^3x_4^3$] (26) at (2,6);
\fill (0,4) circle (0.1);
\coordinate [label=above left: $x_1^2x_2^2x_3x_4$] (04) at (0.3,4);
\fill (2,4) circle (0.1);
\coordinate [label= right: $x_1x_2x_3^3x_4^3$] (24) at (2,4);
\fill (0,2) circle (0.1);
\coordinate [label=below left: $x_1x_2x_4$] (02) at (0.2,2);
\fill (2,2) circle (0.1);
\coordinate [label= right: $x_1^2x_2^2x_3$] (BLG) at (2,2);
\fill (4.7,2) circle (0.1);
\coordinate [label= right: $x_3^3x_4^3$] (42) at (4.7,2);
\fill (2,0) circle (0.1);
\coordinate [label=below: $1$] (1) at (2,0);
\draw [thick] (2,0)--(0,2)--(0,4)--(2,6)--(2,4)--(4.7,2)--(2,0)--(2,2)--(0,4);
\draw [thick] (0,2)--(2,4);
\fill [gray] (4.7,4) circle (0.1);
\coordinate [label=above right: $x_1^2x_2^2x_3^3x_4^3$] (474) at (4.3,4);
\draw [thick,dashed,gray] (4.7,2)--(4.7,4)--(2,2);
\draw [thick,dashed,gray] (4.7,4)--(2,6);
\end{tikzpicture}
\begin{tikzpicture} [>=latex, scale=0.75]
\fill (2,6) circle (0.1);
\coordinate [label=above: $x_1^2x_2^2x_3^3x_4^3z^3$] (26) at (2,6);
\fill (0,4) circle (0.1);
\coordinate [label=above left: $x_1^2x_2^2x_3x_4z^3$] (04) at (0.4,4);
\fill (2,4) circle (0.1);
\coordinate [label= above right: $x_1x_2x_3^3x_4^3z^3$] (24) at (1.9,3.8);
\fill (0,2) circle (0.1);
\coordinate [label=below left: $x_1x_2x_4z^3$] (02) at (0.3,2);
\fill (2,2) circle (0.1);
\coordinate [label= right: $x_1^2x_2^2x_3z$] (BLG) at (2,2);
\fill (5,2) circle (0.1);
\coordinate [label=right: $x_3^3x_4^3$] (42) at (5,2);
\fill (2,0) circle (0.1);
\coordinate [label=below: $1$] (1) at (2,0);
\draw [thick] (2,0)--(0,2)--(0,4)--(2,6)--(2,4)--(5,2)--(2,0)--(2,2)--(0,4);
\draw [thick] (0,2)--(2,4);
\fill  (5,4) circle (0.1);
\coordinate [label=right: $x_1^2x_2^2x_3^3x_4^3z$] (474) at (5,4);
\draw [thick] (5,2)--(5,4)--(2,2);
\draw [thick] (5,4)--(2,6);
\end{tikzpicture}
\caption{Comparison of the $\lcm$-lattices of $I=(x_1x_2x_4,x_1^2x_2^2x_3,x_3^3x_4^3)$, on the left, and $I^\eq$, on the right. See Example~\ref{alotoflcmlatt}. Compare the $\lcm$-lattice on the left with the one drawn in Figure~\ref{lcmlatticefirstex}.}
\label{unsaccodilcm}
\end{center}
\end{figure}
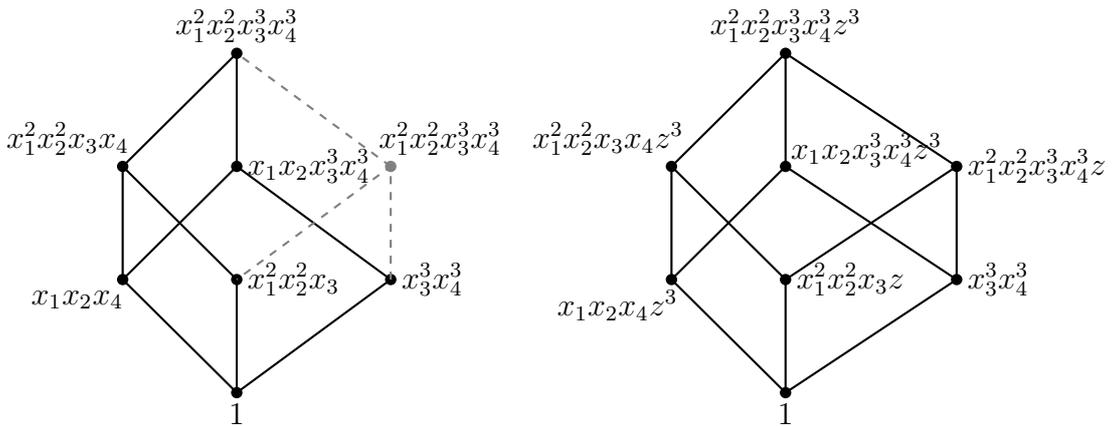
\end{example}

The material in this last part of the section is taken from Section~62 of~\cite{Pe}, which in turn follows~\cite{Lyub} and~\cite{Nov}.

\begin{definition}\label{defrootmap}
Given a monomial ideal $J$ and monomials $u_1,\dots,u_s$, a map $h\:L_J\setminus\{1\}\to\{u_1,\dots,u_s\}$ is called a \tbs{rooting map} for $J$ if the following conditions hold:
\begin{enumerate}
\item[(1)] for each $v\in L_J\setminus\{1\}$, $h(v)$ divides $v$;
\item[(2)] if $v,v'\in L_J\setminus\{1\}$ are such that $h(v)$ divides $v'$ and $v'$ divides $v$, then $h(v)=h(v')$.
\end{enumerate}
For each non-empty subset $U\se\{u_1,\dots,u_s\}$, set $h(U):=h(\lcm(u\mid u\in U))$. The subset $U$ is \tbs{unbroken} if $h(U)\in U$, and $U$ is \tbs{rooted} if all non-empty subsets of $U$ are unbroken. All rooted subsets of $\{u_1,\dots,u_s\}$, together with the empty set~$\emse$, form the \tbs{rooted complex} of $h$, which is denoted by $\RC_h$.
\end{definition}

\begin{proposition}[Novik, \cite{Nov}]\label{proprootcomplex}
If $J\se S$ is a monomial ideal and $h$ is a rooting map for $J$, then $\RC_h$ supports a simplicial free resolution of~$S/J$.
\end{proposition}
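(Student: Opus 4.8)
The plan is to deduce the statement from the acyclicity criterion for simplicial resolutions due to Bayer, Peeva and Sturmfels (see~\cite{Pe}). Label each face $\sigma\in\RC_h$ by the monomial $m_\sigma:=\lcm(u\mid u\in\sigma)$, with the convention $m_\emse:=1$, and let $F_{\RC_h}$ be the associated multigraded complex of free $S$-modules, with the simplicial differential corrected by the ratios of these labels. By that criterion, $F_{\RC_h}$ is a free resolution of $S/J$ as soon as, for every monomial $m$, the subcomplex $(\RC_h)_{\le m}:=\{\sigma\in\RC_h\mid m_\sigma\text{ divides }m\}$ either has no vertices or is $\K$-acyclic, i.e.\ has vanishing reduced simplicial homology. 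So the whole statement reduces to a purely combinatorial assertion about rooted subsets, and I would devote the rest of the proof to it.

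I would fix a monomial $m$ and set $U_m:=\{u_i\mid u_i\text{ divides }m\}$. If $U_m=\emse$ then $(\RC_h)_{\le m}=\{\emse\}$ has no vertices and there is nothing to prove, so I would assume $U_m\ne\emse$, put $v:=\lcm(u\mid u\in U_m)$, which lies in $L_J\setminus\{1\}$ and divides $m$, and set $w:=h(v)=h(U_m)$. The key claim is that $(\RC_h)_{\le m}$ is a cone with apex $w$, that is, $\{w\}\in(\RC_h)_{\le m}$ and $\sigma\cup\{w\}\in(\RC_h)_{\le m}$ for every $\sigma\in(\RC_h)_{\le m}$; since a cone is contractible, hence acyclic, this would complete the proof. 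To establish the claim I would run a short argument with conditions~(1) and~(2) of Definition~\ref{defrootmap}. First, $w=h(v)$ divides $v$, hence $m$, and condition~(2) applied with $v'=w$ (legitimately, since $w\in L_J\setminus\{1\}$) gives $h(w)=h(v)=w$, so the singleton $\{w\}$ is unbroken, hence rooted, and lies in $(\RC_h)_{\le m}$. Second, for $\sigma\in(\RC_h)_{\le m}$ the label $m_{\sigma\cup\{w\}}=\lcm(m_\sigma,w)$ divides $m$, so it only remains to check that $\sigma\cup\{w\}$ is rooted, i.e.\ that every non-empty $\tau\se\sigma\cup\{w\}$ is unbroken; this is immediate if $w\notin\tau$ (then $\tau\se\sigma$), and if $w\in\tau$ one observes that each $u\in\sigma$ divides $m$, hence belongs to $U_m$, hence divides $v$, so $m_\tau$ divides $v$ while $w$ divides $m_\tau$, whence condition~(2) with $v'=m_\tau$ yields $h(\tau)=h(m_\tau)=h(v)=w\in\tau$.

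The step I expect to be the main obstacle is the second half of the cone claim: the delicate point is to recognize that the labels $m_\tau$ of faces of $(\RC_h)_{\le m}$ all divide the particular lcm-lattice element $v=\lcm(U_m)$, and not merely the monomial $m$, since it is exactly this divisibility that makes condition~(2) of the rooting map applicable. Once the cone structure is in place, the acyclicity of every $(\RC_h)_{\le m}$ follows, and the Bayer--Peeva--Sturmfels criterion then gives that $F_{\RC_h}$ is a free resolution of $S/J$, as desired. (One may also note that the cones appearing are contractible, so the homology vanishes over any field and the argument is insensitive to $\car(\K)$.)
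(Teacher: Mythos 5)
The paper does not prove this proposition; it only states it (attributing it to Novik) and refers the reader to Theorem~60.2 of Peeva's book~\cite{Pe}. Your argument is correct and is essentially the standard one given there: reduce to the Bayer--Peeva--Sturmfels acyclicity criterion, and then show that whenever $U_m=\{u_i\mid u_i\text{ divides }m\}$ is nonempty, the restriction $(\RC_h)_{\le m}$ is a cone with apex $w=h(\lcm(U_m))$, hence contractible. Your checks that $\{w\}$ is rooted and that $\sigma\cup\{w\}$ is rooted for every rooted $\sigma\in(\RC_h)_{\le m}$ are complete and correctly isolate the role of condition~(2) of a rooting map, applied to the lcm-lattice element $v=\lcm(U_m)$; the observation that $m_\tau$ divides $v$ (and not merely $m$) is exactly the crux. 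The one small point worth making explicit, so that $F_{\RC_h}$ resolves $S/J$ rather than $S/J'$ for a smaller $J'$, is that every singleton $\{u_i\}$ is rooted, i.e.\ that the vertex labels of $\RC_h$ generate all of $J$: since $h(u_i)$ divides $u_i$ and is itself one of the minimal generators, minimality forces $h(u_i)=u_i$. With that remark added, your proof is complete and matches the cited reference.
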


For a proof and additional information, we refer to Theorem 60.2 of~\cite{Pe}.

\begin{remark}[a resolution for $I^\eq$]
Let $I\se\K[x_1,\dots,x_n]$ be a monomial ideal and consider the $\lcm$-lattice of the equification~$I^\eq$. For a monomial $u\in\K[x_1,\dots,x_n,z]$, denote by $\cl u$ the corresponding monomial in $\K[x_1,\dots,x_n]$ where we set $z=1$. The map
$$h\:L_{I^\eq}\setminus\{1\}\la L_I\setminus\{1\},\quad u\map\cl u$$
is a rooting map for $I^\eq$. Indeed, condition~(1) in Definition~\ref{defrootmap} is satisfied because $\cl u$ divides $u$. As for condition~(2), assume that  $v$ and $v'$ in $L_{I^\eq}\setminus\{1\}$ are such that $h(v)$ divides $v'$ and $v'$ divides $v$. These last two things, for our specific $h$, imply that $h(v)$ divides $h(v')$ and $h(v')$ divides $h(v)$, so that $h(v)=h(v')$. So $h$ constructed here is a rooting map and we can apply Proposition~\ref{proprootcomplex} to~$h$.
\end{remark}

\subsection{Linearization of arbitrary monomial ideals}

In this section we give a generalization of the linearization construction that works for any monomial ideal, not necessarily equigenerated.

\begin{definition}\label{linpertutti}
The \tbs{linearization} of a monomial ideal $I$ is defined as 
$$\Lin(I):=\Lin(I^\eq),$$ where $\Lin$ on the right-hand side is the one introduced in Definition~\ref{deflinearization}, for equigenerated monomial ideals.
\end{definition}

Notice that, in case $I$ is equigenerated, then we get $I^\eq=I$ and the only difference is that we consider the ideal $\Lin(I)$ in a polynomial ring with one more variable. This does not affect~$\Lin$ in any sensible way, especially from the homological point of view. It does however interfere with $\LIN$, and therefore we focus only on $\Lin$ in this section. 

\begin{remark}
From $\Lin(I^\eq)$, as observed in Remark~\ref{tornarindietrosingologrado}, one can recover~$I^\eq$. And, as discussed in Remark~\ref{fromIeqtoI}, from $I^\eq\subset S[z]$ we can find $I\subset S$ simply by setting $z=1$.
\end{remark}

To conclude the section we examine one last matter: the presence of the variable~$z$.
 Since the  linearization in Definition~\ref{deflinearization} did not involve any variable~$z$, is it possible to define the linearization for any monomial ideal without going through the equification? The problem is that we don't really know how to deal with the complete part of $\Lin(I)$. For what concerns the last part, one possibility is just to define it as it is defined for the case of an equigenerated ideal. The problem is that this last part alone does not seem to have any nice properties. Not even in the equigenerated case, in fact, as discussed in Section~\ref{domandeaperte}.
Or, assume that we do construct $\Lin$ for arbitrary monomial ideals as in Definition~\ref{linpertutti}. In that case, in analogy to setting $z=1$ in order to get $I$ back from $I^\eq$, what happens to $\Lin(I^\eq)$ if we set $z=1$? The answer is discussed below.

\begin{notation}
Let $I\se S:=\K[x_1,\dots,x_n]$ be a monomial ideal with minimal system of monomial generators $G(I)=\{f_1,\dots,f_m\}$. Denote $d_j:=\deg(f_j)$ for all $j$ and also let $d:=\max\{d_j\mid j=1,\dots,m\}$ and $\delta:=\min\{d_j\mid j=1,\dots,m\}$. For each $i\in\{1,\dots,n\}$, let $M_i$ be the highest exponent of the variable $x_i$ occuring in $G(I)$. Denote $v:=(M_1,\dots,M_n)$ the vector of highest exponents. Lastly, for each $j\in\{1,\dots,m\}$, denote $g_j:=f_jz^{d-d_j}$ the generators of $I^\eq$.
\end{notation}

Assume that not all generators of $I$ are in degree $d$, otherwise the equification would be the ideal itself. Namely, assume that $\delta<d$. The vector of highest exponents for $I^\eq$ is
$$v^\eq=(M_1,\dots,M_n,d-\delta).$$
So we get
\begin{align*}
\Lin(I^\eq)&=(\text{monomials in $x_1,\dots,x_n,z$ of degree $d$, with vector below $v^\eq$})\\
&\quad+\big(\frac{g_jy_j}{x_k}\mid j=1,\dots,m, \text{where $x_k$ divides $g_j$}\big)\\
&\quad+\big(\frac{g_jy_j}z\mid j=1,\dots,m,\text{where $z$ divides $g_j$}\big).
\end{align*}
For the last part, we could decide not to treat $z$ as one of the ``normal'' variables $x_i$'s, and it would not make a difference if we are going to set~$z=1$ afterwards. That is, we could decide not to put the generators of the form $\frac{f_j}zy_j$ in the last part of $\Lin(I^\eq)$. Indeed, suppose $f_j=x_1^{a_1}\dots x_n^{a_n}z^{d-d_j}$, where $d-d_j>0$. Then we would get $\frac{f_j}zy_j$ among the generators of the last part. But when we take $z=1$, we get $f_jy_j$, which is a redundant generator as we already have $\frac{f_j}{x_k}y_j$ for some $k$, which divides $f_jy_j$.  In other words, when we take the quotient by $z-1$, the third summand gets absorbed in the second one.  And the second summand becomes 
$$\big(\frac{f_jy_j}{x_k}\mid j=1,\dots,m, \text{where $x_k$ divides $f_j$}\big)$$
when taking $z=1$.
So the only thing left is to describe the complete part $C$ of $\Lin(I^\eq)$ after taking $z=1$, let's call it $\cl C$. 

Among the generators of $C$ we have those where $z$~ has exponent~$d-\delta$, the highest possible. Then, when taking $z=1$, from these we get  all possible monomials of degree $\delta$ with exponent vector below~$v$. All the rest of the monomials in $\cl C$ (which of higher degree) are divided by such monomials. In short,
$$\cl C=(x_1,\dots,x_n)^\delta_{\le v}.$$
Observe that we don't necessarily have that all monomials in $(x_1,\dots,x_n)^\delta$ have exponent vector below $v$. Take for instance $I=(x_1^3,x_1x_2)$, so that $d=3$, $\delta=2$ and $v=(3,1)$. We have $I^\eq=(x_1^3,x_1x_2z)$ and $v^\eq=(3,1,1)$. For what concerns the complete part, we have $C=(x_1^3,x_1^2x_2,x_1^2z,x_1x_2z)$ and $\cl C=(x_1^2,x_1x_2)$.

So now the last question is: what is the interplay of $\cl C$ with the other part that survives, namely the second summand? We have the generators
$$h_{j,k}:=\frac{g_jy_j}{x_k}=\frac{f_jz^{d-d_j}y_j}{x_k},\qquad\text{where $x_k$ divides $f_j$.}$$
We have $\deg h_{j,k}=d$ for all $j$ and $k$. When taking $z=1$, for the residue class $\cl{h_{j,k}}$ we get $\deg(\cl{h_{j,k}})=d_j$. So at the end of the day the only survivors are those that come from those $f_j$'s of degree $\delta$. Otherwise, if $d_j>\delta$, then $\cl{h_{j,k}}=\frac{f_jy_j}{x_k}$ is divided by some monomial in $\cl C=(x_1,\dots,x_n)^\delta_{\le v}$. Because of course the vector of exponents of $f_j/x_k$ is below~$v$ and $\deg(f_j/x_k)=d_j-1\ge\delta$.

The discussion above amounts to a proof of the following.

\begin{corollary}
If we set $z=1$, the residue class of the ideal $\Lin(I^\eq)$ is
$$(x_1,\dots,x_n)^\delta_{\le v}
+\big(\frac{f_jy_j}{x_k}\mid j=1,\dots,m, \text{where $x_k$ divides $f_j$ and $d_j=\delta$}\big),$$
where $\delta=\min\{d_1,\dots,d_m\}$.
\end{corollary}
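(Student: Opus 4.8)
The plan is to compute the image of $\Lin(I^\eq)$ under the substitution homomorphism $\pi\colon R\to R/(z-1)$ that sends $z\mapsto 1$. Since $\pi$ carries monomials to monomials, it sends the monomial ideal $\Lin(I^\eq)$ to the monomial ideal generated by the images of its minimal generators, so it suffices to track each generator. First I would split $\Lin(I^\eq)=C+L$ into its complete part $C$ and last part $L$, where—treating $z$ on the same footing as the variables $x_i$, since it is one of the ambient variables of the ring in which $I^\eq$ lives—the last part $L$ also contains the generators $g_jy_j/z$ whenever $z\mid g_j$, with $g_j=f_jz^{d-d_j}$. I would then analyze $\pi(C)$ and $\pi(L)$ separately. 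Throughout we may assume $I\ne S$, so $\delta\ge 1$; and if $\delta=d$ the ideal is already equigenerated, $I^\eq=I$, $\pi$ acts trivially, and the asserted formula degenerates to the original $\Lin(I)$, so the statement is consistent and we may assume $\delta<d$.

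For the last part: the generators of $L$ are $g_jy_j/x_k=f_jz^{d-d_j}y_j/x_k$ with $x_k\mid g_j$. If $x_k$ is one of the $x_i$, then $\pi$ sends this generator to $f_jy_j/x_i$. If $x_k=z$ (possible only when $d_j<d$), then $\pi$ sends it to $f_jy_j$, which is a multiple of $f_jy_j/x_i$ for any variable $x_i\mid f_j$ (one exists since $\delta\ge1$), hence redundant. Among the surviving generators $f_jy_j/x_i$, those with $d_j>\delta$ have $\deg(f_j/x_i)=d_j-1\ge\delta$ while the exponent vector of $f_j/x_i$ is componentwise $\le v$; once we show below that $(x_1,\dots,x_n)^\delta_{\le v}$ appears in $\pi(C)$, it follows that $f_j/x_i$, and hence $f_jy_j/x_i$, is divisible by a generator of $\pi(C)$ and thus redundant. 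This leaves exactly the generators $f_jy_j/x_k$ with $d_j=\delta$ and $x_k\mid f_j$.

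For the complete part: $C$ consists of all monomials of degree $d$ in $x_1,\dots,x_n,z$ whose exponent vector is $\le v^\eq=(M_1,\dots,M_n,d-\delta)$. The generators with $z$-exponent equal to the maximum $d-\delta$ are precisely $z^{d-\delta}m$ with $m$ a monomial of degree $\delta$ in $x_1,\dots,x_n$ with exponent vector $\le v$, and under $\pi$ these yield exactly the generators of $(x_1,\dots,x_n)^\delta_{\le v}$. Any other monomial $\mu z^e\in C$ has $e<d-\delta$, so $\mu$ has degree $d-e>\delta$ and exponent vector $\le v$; lowering some exponents of $\mu$ produces a submonomial $\mu'$ of degree exactly $\delta$ whose exponent vector is still $\le v$, and $\pi(\mu z^e)=\mu$ is a multiple of $\mu'$, hence redundant. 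Therefore $\pi(C)=(x_1,\dots,x_n)^\delta_{\le v}$, and adding the two contributions gives the claimed identity $\pi(\Lin(I^\eq))=(x_1,\dots,x_n)^\delta_{\le v}+\big(f_jy_j/x_k\mid d_j=\delta,\ x_k\mid f_j\big)$.

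The argument is elementary bookkeeping; the only point requiring care—and the one I would state most explicitly—is the chain of redundancy reductions, namely verifying that each discarded generator is genuinely divisible by a surviving one: the $g_jy_j/z$ by some $f_jy_j/x_i$, the complete-part monomials of $x$-degree $>\delta$ by a degree-$\delta$ submonomial whose exponent vector remains $\le v$, and the last-part generators $f_jy_j/x_i$ with $d_j>\delta$ by a generator of $(x_1,\dots,x_n)^\delta_{\le v}$. No genuine obstacle is expected beyond this verification.
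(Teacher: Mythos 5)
Your argument is correct and matches the paper's proof essentially line by line: you decompose $\Lin(I^\eq)$ into the complete part and the last part (further split according to whether the variable $x_k$ being removed is one of the $x_i$ or is $z$), show that the image of the complete part under $z\mapsto1$ is $(x_1,\dots,x_n)^\delta_{\le v}$ via the $z$-exponent stratification, discard the $g_jy_j/z$ images and the $f_jy_j/x_k$ with $d_j>\delta$ by the same divisibility arguments, and keep only the degree-$\delta$ last-part generators. The redundancy checks you flag as the only delicate point are exactly the ones the paper spells out, and your treatment of them is sound.
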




\section{Possible future directions}\label{domandeaperte}

Another way of defining the last part of $\Lin(I)$ in Definition~\ref{deflinearization} is saying that it's generated by the monomials $y_j\de f_j/\de x_k$. (One may even consider a ``monic'' partial derivative, as it sometimes happens.)  This gives a relaxation in the definition, in that one does not need to check that the variables with respect to which one differentiates actually are in the support of the monomials, because if not one simply gets zero. This definition could also provide a possible way to generalize the linearization construction to non-monomial ideals, 
a possibility that has not yet  been explored.

Generalizing the equification construction ``as it is'' to non-monomial ideals  seems to fail very easily, because the definition depends very much on the system of generators one considers. Even if one takes a homogeneous ideal, where the number of minimal homogeneous generators of each degree is invariant, the construction would still depend on the chosen system of generators: for instance, one has
$$(x+y,x^2)=(x+y,xy)\quad\text{but}\quad\big((x+y)z,x^2\big)\ne\big((x+y)z,xy\big).$$
It might be possible however to ``improve'' the definition of equification. For instance, one could fix a monomial order, and with respect to that monomial order each ideal has a canonical system of generators, its reduced Gr\"obner basis. In the monomial case this would reduce to our Definition~\ref{defequification}.

One more open problem is that of understanding whether there are any nice interplays of~$\Lin$ and standard operations on ideals, as briefly discussed in Section~\ref{linandops}.

\medskip

When I had the chance to present the material in this paper, several questions were asked. A particularly interesting direction of investigation was suggested by Marilina Rossi. My knowledge of it relies mainly on the work of Bolognini in~\cite{bolo1}. There he studies the concept of \emph{Betti splitting}, already present in the literature. A different version of this notion appears already in particular in~\cite{HV}, which is interesting for us because of its relation to our Section~\ref{hyplinres}. Unfortunately we did not manage to find or prove anything fruitful concerning this topic, so only counterxamples are presented in this very last section.

\begin{definition}
Let $I$, $J$ and $K$ be monomial ideals such that $G(I)$ is the disjoint union of $G(J)$ and $G(K)$, so that in particular $I=J+K$. Then $I=J+K$ is a \tbs{Betti splitting} if
$$\beta_{i,j}(I)=\beta_{i,j}(J)+\beta_{i,j}(K)+\beta_{i-1,j}(J\cap K)$$
for all $i,j\in\N$.
\end{definition}

This concept is intimately related to ideals with linear resolutions or a generalization of them,   given by \emph{componentwise linear ideals} (see~\cite{HeHi} or Section~8.2 of~\cite{HH}). This is why the concept seemed naturally close to the topic of this paper. The definition of $\Lin(I)$ or $\LIN(I)$ already provides a very natural way of partitioning the generators, namely by choosing $J$ as the complete part~$C$ and $K$ as the last part~$L$. So one could reasonably expect this to be  a Betti splitting. But that's not the case in general, see the following  example.

\begin{example}
Consider $I=(x_1^3x_2,x_2x_3^3)\subset\K[x_1,x_2,x_3]$. Then we have
$$\beta(\Lin(I))=\begin{array}{c|ccc}
&0&1&2\\
\hline
4&11&16&6
\end{array}\qquad
\beta(C)=\begin{array}{c|ccc}
&0&1&2\\
\hline
4&7&8&2
\end{array}$$
$$\beta(L)=\begin{array}{c|ccc}
&0&1&2\\
\hline
4&4&2&-\\
5&-&-&-\\
6&-&1&-\\
7&-&1&1
\end{array}\qquad
\beta(C\cap L)=\begin{array}{c|ccc}
&0&1&2\\
\hline
5&6&4&-\\
6&-&1&-\\
7&-&1&1.
\end{array}$$
We do have
\begin{align*}
\beta_{0,1}(\Lin(I))&=11=4+7+0,\\
\beta_{1,5}(\Lin (I))&=16=8+2+6,\\
\beta_{2,6}(\Lin (I))&=6=2+0+4,
\end{align*}
which agree with the definition of Betti splitting, but unfortunately there is some more rubbish in $L$ and $C\cap L$ that would give something non-zero for $\Lin(I)$, whereas the Betti numbers of $\Lin(I)$ are zero. The same problems occurs considering $\LIN(I)$ instead of $\Lin(I)$.
\end{example}

\begin{question}
When do we have that $\Lin(I)$ (or $\LIN(I)$), decomposed as a sum of the complete part $C$ and the last part $L$, is a Betti splitting? Namely, when do we have that
$$\beta_{i,j}(\Lin(I))=\beta_{i,j}(C)+\beta_{i,j}(L)+\beta_{i-1,j}(C\cap L)$$
for all $i,j\in\N$? Or is there another meaningful way of partitioning the generators that constitutes a Betti splitting?
\end{question}

\begin{proposition}[Bolognini, Proposition 3.1 of~\cite{bolo1}]
Let $ I$ be a monomial ideal with a $d$-linear resolution,
and $J, K \ne 0$ monomial ideals such that $I = J + K$, $G(I) = G(J)\cup G(K)$ and $G(J)\cap G(K) = \emse$.
Then the following facts are equivalent:
\begin{itemize}
\item[(i)] $I = J + K$ is a Betti splitting of $I$;
\item[(ii)] $J$ and $K$ have $d$-linear resolutions.
\end{itemize}
If this is the case, then $J\cap K$ has a $(d + 1)$-linear resolution
\end{proposition}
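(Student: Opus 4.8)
The plan is to prove the equivalence (i) $\Leftrightarrow$ (ii) by exploiting the standard long exact sequence in $\Tor$ coming from the short exact sequence
$$0\la J\cap K\la J\oplus K\la J+K\la 0,$$
where the first map is $u\mapsto(u,-u)$ (up to sign) and the second is $(u,v)\mapsto u+v$. This gives, for each $j$, a long exact sequence
$$\cdots\la\Tor_i(J\cap K)_j\la\Tor_i(J)_j\oplus\Tor_i(K)_j\la\Tor_i(I)_j\la\Tor_{i-1}(J\cap K)_j\la\cdots,$$
with $\Tor$ taken over $S$ against $\K$, so that $\dim_\K\Tor_i(M)_j=\beta_{i,j}(M)$. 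The key observation, which I would establish first, is that a Betti splitting is equivalent to the vanishing of all the connecting maps $\Tor_i(I)_j\to\Tor_{i-1}(J\cap K)_j$, because this vanishing makes the long exact sequence break into short exact sequences and forces $\beta_{i,j}(I)=\beta_{i,j}(J)+\beta_{i,j}(K)+\beta_{i-1,j}(J\cap K)$; conversely, if all connecting maps are zero the dimension count gives exactly the Betti splitting identity, and one checks that the dimension identity holding for \emph{all} $i,j$ forces each connecting map to vanish (if some connecting map were nonzero, exactness would force a strict inequality somewhere).

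For the implication (i) $\Rightarrow$ (ii), I would argue as follows. Since $G(J)\subseteq G(I)$ and $I$ is generated in degree $d$, the ideal $J$ is generated in degree $d$, and similarly $K$; so it suffices to rule out $\beta_{i,j}(J)\ne 0$ for $j>i+d$ (and likewise for $K$). From the Betti splitting identity, $\beta_{i,j}(J)\le\beta_{i,j}(I)+\beta_{i-1,j}(J\cap K)$; since $I$ has a $d$-linear resolution, $\beta_{i,j}(I)=0$ for $j\ne i+d$. So I must control $\beta_{i-1,j}(J\cap K)$ for $j>i+d$. Here I would use that $J\cap K$ is generated in degrees $\ge d+1$ (any element of $G(J\cap K)$ is a common multiple of a degree-$d$ generator of $J$ and a degree-$d$ generator of $K$, hence has degree $\ge d+1$, with equality impossible unless the two generators are equal, which they are not since $G(J)\cap G(K)=\emptyset$; actually $\lcm$ of two distinct degree-$d$ monomials has degree $\ge d+1$). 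Running an induction on homological degree $i$: assuming $J$ and $K$ are $d$-linear in homological degrees $<i$ and using the long exact sequence, one shows $J\cap K$ has no Betti numbers $\beta_{i-1,j}$ with $j>i+d$ forcing $\beta_{i,j}(J)$ and $\beta_{i,j}(K)$ to vanish there too; the base case $i=0$ is the generation-degree statement.

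For the converse (ii) $\Rightarrow$ (i), assuming $J$ and $K$ both have $d$-linear resolutions, I would show directly that all connecting homomorphisms vanish for degree reasons. Since $J$ is $d$-linear, $\Tor_i(J)_j\ne 0$ only for $j=i+d$; likewise for $K$; and since $J\cap K$ is generated in degree $\ge d+1$, its minimal free resolution has $\Tor_{i-1}(J\cap K)_j\ne 0$ only for $j\ge i-1+d+1=i+d$, but in fact a sharper bound: a monomial ideal generated in degrees $\ge d+1$ has $\Tor_p$ concentrated in degrees $\ge p+d+1$ — wait, that is too strong in general, so instead I would just use $j\ge (\text{min generator degree})+ (i-1)\ge d+1+(i-1)=i+d$ only when the resolution is linear, which we do not yet know. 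The cleaner route: the connecting map $\Tor_i(I)_j\to\Tor_{i-1}(J\cap K)_j$ is nonzero only if both $\Tor_i(I)_j\ne0$, which by $d$-linearity of $I$ forces $j=i+d$, and $\Tor_{i-1}(J\cap K)_j\ne 0$. From the long exact sequence in degree $j=i+d$, the piece $\Tor_i(J)_{i+d}\oplus\Tor_i(K)_{i+d}\to\Tor_i(I)_{i+d}\to\Tor_{i-1}(J\cap K)_{i+d}\to\Tor_{i-1}(J)_{i+d}\oplus\Tor_{i-1}(K)_{i+d}$; the last term is $0$ because $J,K$ are $d$-linear (so $\Tor_{i-1}$ lives in degree $i-1+d\ne i+d$), hence $\Tor_{i-1}(J\cap K)_{i+d}$ is a quotient of $\Tor_i(I)_{i+d}$, which says $J\cap K$ has Betti numbers only in homological-degree/internal-degree pairs $(i-1,i+d)$, i.e. it has a $(d+1)$-linear resolution; and then the connecting map $\Tor_i(I)_{i+d}\to\Tor_{i-1}(J\cap K)_{i+d}$ being surjective with $\Tor_i(J)_{i+d}\oplus\Tor_i(K)_{i+d}\to\Tor_i(I)_{i+d}$ exact forces — here I need to check the map into $\Tor_i(I)$ is actually injective, equivalently the connecting map one step up vanishes, which I get by downward induction from the top of the resolution, or by noting the sequence of $\K$-vector spaces and the $d$-linearity constraints pin down all ranks. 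This simultaneously yields the final assertion that $J\cap K$ has a $(d+1)$-linear resolution.

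\textbf{Main obstacle.} The genuinely delicate point is the converse direction: showing that $d$-linearity of $J$ and $K$ forces \emph{all} connecting maps to vanish (equivalently, that the long exact sequence splits). The degree bookkeeping almost does it, but one must carefully handle the homological degree $i$ where $\Tor_i(I)_{i+d}$, $\Tor_i(J)_{i+d}\oplus\Tor_i(K)_{i+d}$, and $\Tor_{i-1}(J\cap K)_{i+d}$ can all be simultaneously nonzero; ruling out a nonzero connecting map there requires either an induction on $i$ (propagating the linearity of $J\cap K$ and the injectivity of $\Tor_i(J)\oplus\Tor_i(K)\to\Tor_i(I)$) or invoking that a map of minimal free resolutions realizing $J\oplus K\to I$ can be chosen so that the mapping cone computes a resolution of $J\cap K[1]$ with no cancellation in the linear strand. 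I expect the cleanest writeup uses the mapping-cone description together with the fact (to be checked) that when $J,K,I$ are all $d$-linear, the comparison map has all entries linear forms, so the mapping cone is already minimal, which is exactly the Betti splitting.
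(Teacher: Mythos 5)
The paper states this as a cited result of Bolognini and gives no proof, so there is no in-paper argument to compare against; I will assess your proposal on its own terms. Your central claim is backwards: a Betti splitting is \emph{not} equivalent to the vanishing of the connecting maps $\Tor_i(I)_j\to\Tor_{i-1}(J\cap K)_j$. If every connecting map vanished, the long exact sequence would break into $0\to\Tor_i(J\cap K)_j\to\Tor_i(J)_j\oplus\Tor_i(K)_j\to\Tor_i(I)_j\to0$, giving $\beta_{i,j}(I)=\beta_{i,j}(J)+\beta_{i,j}(K)-\beta_{i,j}(J\cap K)$, which is inclusion-exclusion, not the Betti splitting identity. The correct characterization is that the \emph{other} maps, $\alpha_i\colon\Tor_i(J\cap K)_j\to\Tor_i(J)_j\oplus\Tor_i(K)_j$, all vanish; that makes the long exact sequence break as $0\to\Tor_i(J)_j\oplus\Tor_i(K)_j\to\Tor_i(I)_j\to\Tor_{i-1}(J\cap K)_j\to0$, and this is precisely $\beta_{i,j}(I)=\beta_{i,j}(J)+\beta_{i,j}(K)+\beta_{i-1,j}(J\cap K)$. (Conversely, the Betti splitting identity, combined with the rank equations from the long exact sequence, forces $\dim\mathrm{im}\,\alpha_i=0$ for all $i$.)

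Once the maps are identified correctly the argument shortens dramatically and the ``genuine obstacle'' you describe at the end is an artefact of chasing the wrong maps. For (ii)$\Rightarrow$(i): your observation that $J\cap K$ is generated in degrees $\ge d+1$ (each minimal generator is a least common multiple of two distinct degree-$d$ monomials) gives $\Tor_i(J\cap K)_j=0$ for $j\le i+d$; the $d$-linearity of $J$ and $K$ gives $\Tor_i(J)_j\oplus\Tor_i(K)_j=0$ for $j\ne i+d$; hence for every $i,j$ either the source or the target of $\alpha_i$ vanishes, so $\alpha_i=0$ and the Betti splitting holds. Reading off the third term of the resulting short exact sequences together with the same degree bounds shows $\Tor_{i-1}(J\cap K)_j$ is concentrated in $j=i+d$, i.e.\ $J\cap K$ has a $(d+1)$-linear resolution --- no induction and no mapping-cone minimality argument are needed. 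For (i)$\Rightarrow$(ii) you also overcomplicate and, moreover, write the inequality the unhelpful way round: the Betti splitting identity has nonnegative summands, so it gives directly $\beta_{i,j}(J)\le\beta_{i,j}(I)$ and $\beta_{i,j}(K)\le\beta_{i,j}(I)$, and the $d$-linearity of $I$ then kills everything off the diagonal; there is no need to control $\beta_{i-1,j}(J\cap K)$ at all.
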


Specializing this result to our notation, it means the following: $\Lin(I)=C+L$ is a Betti splitting if and only if  $C$ and $L$ have a linear resolution.
So, since we already know that $C$ has a linear resolution and $L$ is generated in degree~$d$, the result tells us that $\Lin(I)=C+L$ is a Betti splitting if and only if $L$ has  $d$-linear resolution (and $C\cap L$ is automatically $(d+1)$-linear). So  this motivates the following.

\begin{question}
When does $L$ have a linear resolution?
\end{question}

In particular, for $d=2$ we have Theorem~\ref{frofro} characterizing the squarefree quadratic monomials with linear resolution. 

\begin{example}
Consider the ideal $I=(x_1x_2,x_2x_3)\subset \K[x_1,x_2,x_3]$, which corresponds to the path on three vertices. This ideal has linear resolution. The last part of $\Lin(I)$ is $L=(x_1y_1,x_2y_1,x_2y_2,x_3y_2)$, and it doesn't have a linear resolution.
\end{example}


In general,  it would be interesting to find properties of the last part $L$ alone. Observe that one could define it for arbitrary monomial ideals in the same way as it is for equigenerated ideals, and investigate more in general properties of $L$ in that case. We now conclude with a generalization of the concept of ideal with linear resolution.

\begin{definition}
For a homogenous ideal $I\se S$ we denote $I_{\langle d\rangle}$ the ideal generated by all homogeneous elements of degree~$d$ in $I$. We say that $I$ is \tbs{componentwise linear} if $I_{\langle d\rangle}$ has a linear resolution for all $d$.
\end{definition}

Componentwise linear ideals were introduced by~{Herzog} and~{Hibi} in~\cite{HeHi}. Ideals with linear resolutions are componentwise linear and in particular ideals with linear quotients are componentwise linear (see~\cite{HH}, Lemma~8.2.10 and Theorem~8.2.15).
The ideals involved in this paper are equigenerated, so being componentwise linear for them forces having a linear resolution. But perhaps this could be a meaningful concept to analyze in case the linearization can be defined in such a way that it's not  necessarily equigenerated anymore. In particular, we quote one last result from~\cite{bolo1}.

\begin{theorem}[Bolognini, Theorem~3.3 of~\cite{bolo1}]
Let $I$, $J$ and $K$ be monomial ideals such that $I = J + K$ and $G(I)$ is the
disjoint union of~$G(J)$ and $G(K)$. If $J$ and $K$ are componentwise linear, then $I = J +K$ is a Betti splitting of $I$.
\end{theorem}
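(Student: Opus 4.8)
The plan is to run the whole argument through the Mayer--Vietoris sequence and reduce the statement to the vanishing of certain $\Tor$-maps, which componentwise linearity then supplies. From $I=J+K$ one has the short exact sequence
$$0\longrightarrow J\cap K\longrightarrow J\oplus K\longrightarrow I\longrightarrow 0,\qquad a\mapsto(a,-a),\ \ (a,b)\mapsto a+b,$$
and tensoring with $\K$ over $S$ gives a long exact sequence of graded modules
$$\cdots\to\Tor_i(J\cap K,\K)\xrightarrow{\ \psi_i\ }\Tor_i(J,\K)\oplus\Tor_i(K,\K)\to\Tor_i(I,\K)\to\Tor_{i-1}(J\cap K,\K)\to\cdots$$
A straightforward dimension count in this sequence shows that, under the standing hypothesis $G(I)=G(J)\sqcup G(K)$, the decomposition $I=J+K$ is a Betti splitting if and only if every graded component of every $\psi_i$ is the zero map --- this is the criterion behind Bolognini's Proposition~3.1 in \cite{bolo1}, itself going back to Francisco, H\`a and Van Tuyl. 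Equivalently, both inclusion-induced maps $\Tor_i(J\cap K,\K)\to\Tor_i(J,\K)$ and $\Tor_i(J\cap K,\K)\to\Tor_i(K,\K)$ must vanish for all $i$, and this is what I would aim to prove.

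The case $i=0$ is purely combinatorial and uses only the disjointness hypothesis: if $u\in G(J\cap K)$ then $u$ is a multiple of some $w\in G(J)$, and since $w\mid u$ while $w$ is a minimal generator of $I=J+K$, minimality forbids $u\in G(K)$ (and $u=w$ is excluded by disjointness), so $u$ is a non-minimal element of $K$; symmetrically it is non-minimal in $J$. Hence its class dies in both $J/\mm J$ and $K/\mm K$, so $\psi_0=0$. For $i\ge 1$ one must use that $J$ and $K$ are componentwise linear. The first step I would take is to show that $J\cap K$ is again componentwise linear. One clean way is to polarize: since $G(I)=G(J)\sqcup G(K)$ fixes a common set of monomial generators, polarizing $I$ simultaneously polarizes $J$, $K$ and $J\cap K$ (polarization commutes with sums, intersections and $\lcm$ and preserves minimal generators), it preserves componentwise linearity and graded Betti numbers, and a Betti splitting downstairs pulls back to one upstairs; so we may assume $J,K$ squarefree. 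Then componentwise linearity is equivalent to sequential Cohen--Macaulayness of the Alexander dual --- the sequentially-Cohen--Macaulay strengthening of the Eagon--Reiner Theorem~\ref{eigonreiner} due to Herzog--Hibi \cite{HeHi} --- and Alexander duality sends $J\cap K,\,J,\,K$ to $J^\vee+K^\vee,\,J^\vee,\,K^\vee$. With all three of $J,K,J\cap K$ componentwise linear, their minimal resolutions are linear strand by strand through the degree components $(-)_{\langle d\rangle}$, and using $I_{\langle d\rangle}=J_{\langle d\rangle}+K_{\langle d\rangle}$ (the degree-$d$ graded pieces add) I would induct on the number of distinct generator degrees of $I$, the equigenerated case being the base; at each stage one peels off the top-degree strand and combines the inductive hypothesis with the criterion $\psi=0$ applied to suitable truncations.

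The main obstacle is precisely this passage from componentwise linearity of $J$ and $K$ to the vanishing of $\psi_i$ for $i\ge1$. The tempting shortcut --- that the nonzero groups $\Tor_i(J\cap K,\K)$ and $\Tor_i(J,\K)\oplus\Tor_i(K,\K)$ occupy disjoint ranges of internal degree --- is false: $J\cap K$ can have minimal generators in degrees already used by generators of $J$ or $K$, so the argument cannot rest on crude regularity bounds and must genuinely exploit the finer strand-by-strand linearity of the resolutions. Establishing that $J\cap K$ inherits componentwise linearity and then extracting the $\Tor$-vanishing from it is the technical heart; the tidiest packaging I know reduces, via polarization and Alexander duality, to a statement about quotients by sequentially Cohen--Macaulay ideals, which one handles by filtering an ideal through its pure components and invoking the behaviour of Cohen--Macaulayness under regular sequences as in Theorem~\ref{CMquotientCM}.
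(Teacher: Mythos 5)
The paper does not prove this statement: it is quoted from Bolognini~\cite{bolo1} as a known result and used as a black box, so there is no in-paper argument for your proposal to be measured against. Judged on its own, your proposal has the right scaffolding but does not contain the proof. The Mayer--Vietoris set-up and the reduction to the vanishing of the inclusion-induced maps $\psi_i\colon\Tor_i(J\cap K,\K)\to\Tor_i(J,\K)\oplus\Tor_i(K,\K)$ in every internal degree is indeed the standard criterion (going back to Francisco, H\`a and Van Tuyl and recalled as Proposition~2.1 in~\cite{bolo1}), and your elementary handling of $\psi_0$ via disjointness of $G(J)$ and $G(K)$ is correct.

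The gap is precisely the part you candidly label ``the technical heart'': the vanishing of $\psi_i$ for $i\ge1$, which is the whole theorem. What you provide there is a to-do list, not an argument. You propose to show $J\cap K$ is componentwise linear and then polarize, Alexander-dualize, invoke sequential Cohen--Macaulayness, and induct over degree strata, but none of these steps is carried out, and even the stated goal would not suffice: as you observe yourself, in the non-equigenerated setting the internal degrees of $\Tor_i(J\cap K,\K)$ can coincide with those of $\Tor_i(J,\K)\oplus\Tor_i(K,\K)$, so componentwise linearity of all three ideals would not on its own force $\psi_i$ to vanish, and you supply no replacement mechanism extracting the vanishing from strand-by-strand linearity. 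Some of the intermediate reductions are also asserted rather than justified --- for instance, that polarization preserves componentwise linearity is more delicate than the preservation of graded Betti numbers, since $(I^{\mathrm{pol}})_{\langle d\rangle}$ is not the polarization of $I_{\langle d\rangle}$, and this needs either an argument or a citation rather than a parenthetical. In short, the $i\ge1$ case is missing, and the outlined route to it is neither executed nor evidently adequate.
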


\end{document}